\numberwithin{equation}{section}
\numberwithin{figure}{section}
\DeclareSymbolFont{bbold}{U}{bbold}{m}{n}
\DeclareSymbolFontAlphabet{\mathbbold}{bbold}
\newcommand{\ind}{\mathbbold{1}}
\theoremstyle{plain} \newtheorem{theorem}{Theorem}[section]
\theoremstyle{plain} \newtheorem{proposition}[theorem]{Proposition}
\theoremstyle{plain} \newtheorem{lemma}[theorem]{Lemma}
\theoremstyle{plain} \newtheorem{corollary}[theorem]{Corollary}
\theoremstyle{definition} \newtheorem{definition}[theorem]{Definition}
\theoremstyle{definition} \newtheorem{notation}[theorem]{Notation}
\theoremstyle{remark} \newtheorem{remark}[theorem]{Remark}
\theoremstyle{remark} \newtheorem{example}[theorem]{Example}
\newcommand{\bE}{\mathbb{E}}
\newcommand{\bH}{\mathbb{H}}
\newcommand{\bN}{\mathbb{N}}
\newcommand{\bP}{\mathbb{P}}
\newcommand{\bQ}{\mathbb{Q}}
\newcommand{\bR}{\mathbb{R}}
\newcommand{\dmin}{\partial_{\text{\rm \tiny min}}}
\def\cD{\mathcal{D}}
\def\cE{\mathcal{E}}
\def\cF{\mathcal{F}}
\def\cG{\mathcal{G}}
\def\cH{\mathcal{H}}
\def\cI{\mathcal{I}}
\def\cT{\mathcal{T}}
\def\tt{\mathbf{t}}
\def\bb{\mathbf{b}}
\def\ww{\mathbf{w}}
\renewcommand{\ss}{\mathbf{s}}
\def\DD{\mathbf{D}}
\def\SSS{\mathbf{S}}
\def\TT{\mathbf{T}}
\def\UU{\mathbf{U}}
\def\VV{\mathbf{V}}
\def\ZZ{\mathbf{Z}}
\begin{document}

\allowdisplaybreaks

\title[R\'emy's tree growth chain]{Doob--Martin boundary \\
of R\'emy's tree growth chain}

\author{Steven N. Evans}
\address{Department of Statistics\\
         University  of California\\ 
         367 Evans Hall \#3860\\
         Berkeley, CA 94720-3860 \\
         U.S.A.}

\email{evans@stat.berkeley.edu}
\thanks{SNE supported in part by NSF grant DMS-0907630, NSF grant  DMS-1512933,
and NIH grant 1R01GM109454-01. AW supported in part by DFG priority program 1590.}

\author{Rudolf Gr\"ubel}
\address{Institut f\"ur Mathematische Stochastik\\ 
         Leibniz Universit\"at Hannover\\
         Postfach 6009\\ 
         30060 Hannover\\
         Germany}

\email{rgrubel@stochastik.uni-hannover.de}

\author{Anton Wakolbinger}
\address{Institut f\"ur Mathematik \\
         Goethe-Universit\"at \\
         60054 Frankfurt am Main\\
         Germany}

\email{wakolbin@math.uni-frankfurt.de}

\subjclass[2000]{Primary 60J50, secondary 60J10, 68W40}

\keywords{binary tree,  
tail $\sigma$-field, 
Doob--Martin compactification, 
Poisson boundary,  
bridge,
real tree,
exchangeability,
continuum random tree,
Catalan number, 
graph limit,
graphon,
partial order}

\date{\today}

\begin{abstract}
R\'emy's algorithm is a Markov chain that iteratively generates
a sequence of random  trees 
in such a way that the $n^{\mathrm{th}}$ tree is
uniformly distributed over the set of rooted, planar, binary trees with
$2n+1$ vertices.  We obtain a concrete characterization of the
Doob--Martin boundary of this transient Markov chain and thereby
delineate all the ways in which, loosely speaking, this process
can be conditioned to ``go to infinity'' at large times.
A (deterministic) sequence of finite rooted, planar,
binary trees converges to a point 
in the boundary if for each $m$ the random rooted, planar, binary
tree spanned by $m+1$ leaves
chosen uniformly at random from the $n^{\mathrm{th}}$ tree in the
sequence converges in distribution as $n$ tends to infinity -- a
notion of convergence that is analogous to one that appears in
the recently developed theory of graph limits.

We show that a point in the Doob--Martin boundary may be identified
with the following ensemble of objects:
a complete separable $\bR$-tree that is rooted and binary
in a suitable sense, a diffuse probability measure on the $\bR$-tree
that allows us to make sense of sampling points from it,
and a kernel on the $\bR$-tree
that describes the probability that the first of a given
pair of points is below and to the left of their most recent common
ancestor while the second is below and to the right.  
Two such ensembles represent the same point in the
boundary if for each
$m$ the random, rooted, planar, binary trees spanned by $m+1$ independent
points chosen according to the respective probability measures have
the same distribution.  Also, the Doob--Martin boundary
corresponds bijectively to the set of extreme point of the closed convex set 
of nonnegative harmonic functions that take the value $1$ at the binary tree with
$3$ vertices; in other words, the minimal and full Doob--Martin
boundaries coincide.  These results are in the spirit of
the identification of graphons as limit objects in the theory of graph limits.
\end{abstract}

\maketitle
\tableofcontents

\section{Introduction}
\label{S:intro}

R\'emy's algorithm \cite{MR803997} iteratively generates
a sequence of random binary trees 
$T_1, T_2, \ldots$ in a Markovian manner in such a way that
$T_n$ is uniformly distributed on the set of binary trees with
$2n+1$ vertices (see \cite{MR1331596} for a textbook
discussion of this procedure).
Here (and throughout this paper) a {\em binary tree} is a finite rooted tree 
in which every vertex has zero or two children and the tree
is {\em planar}, so it is possible
to distinguish between the {\em left} and {\em right}
children of a vertex with two children.  

A binary tree has $2n+1$ vertices for some $n \in \bN$:
$n+1$ leaves and $n$ interior vertices. 
The number of binary trees with $2n+1$ vertices is the 
{\em Catalan number} $C_n := \frac{1}{n+1} \binom{2n}{n}$ \cite{MR1442260}.

Writing $\{0,1\}^\star:=\bigsqcup_{k=0}^\infty\{0,1\}^k$ for 
the set of finite words drawn from the alphabet $\{0,1\}$ 
(with the empty word $\emptyset$
allowed), any binary tree can be identified with a unique
finite subset  $\tt \subset \{0,1\}^\star$ 
that has the properties:
\begin{itemize}
\item
$v_1 \ldots v_k \in \tt \Longrightarrow v_1 \ldots v_{k-1} \in \tt$,
\item
$v_1 \ldots v_k 0 \in \tt \Longleftrightarrow v_1 \ldots v_k 1 \in \tt$.
\end{itemize}
The empty word $\emptyset \in \{0,1\}^\star$ is the root of the tree.
See Figure~\ref{fig:binary_tree} for an example of this representation.
\begin{figure}[h]
		\includegraphics[height=4.5cm]{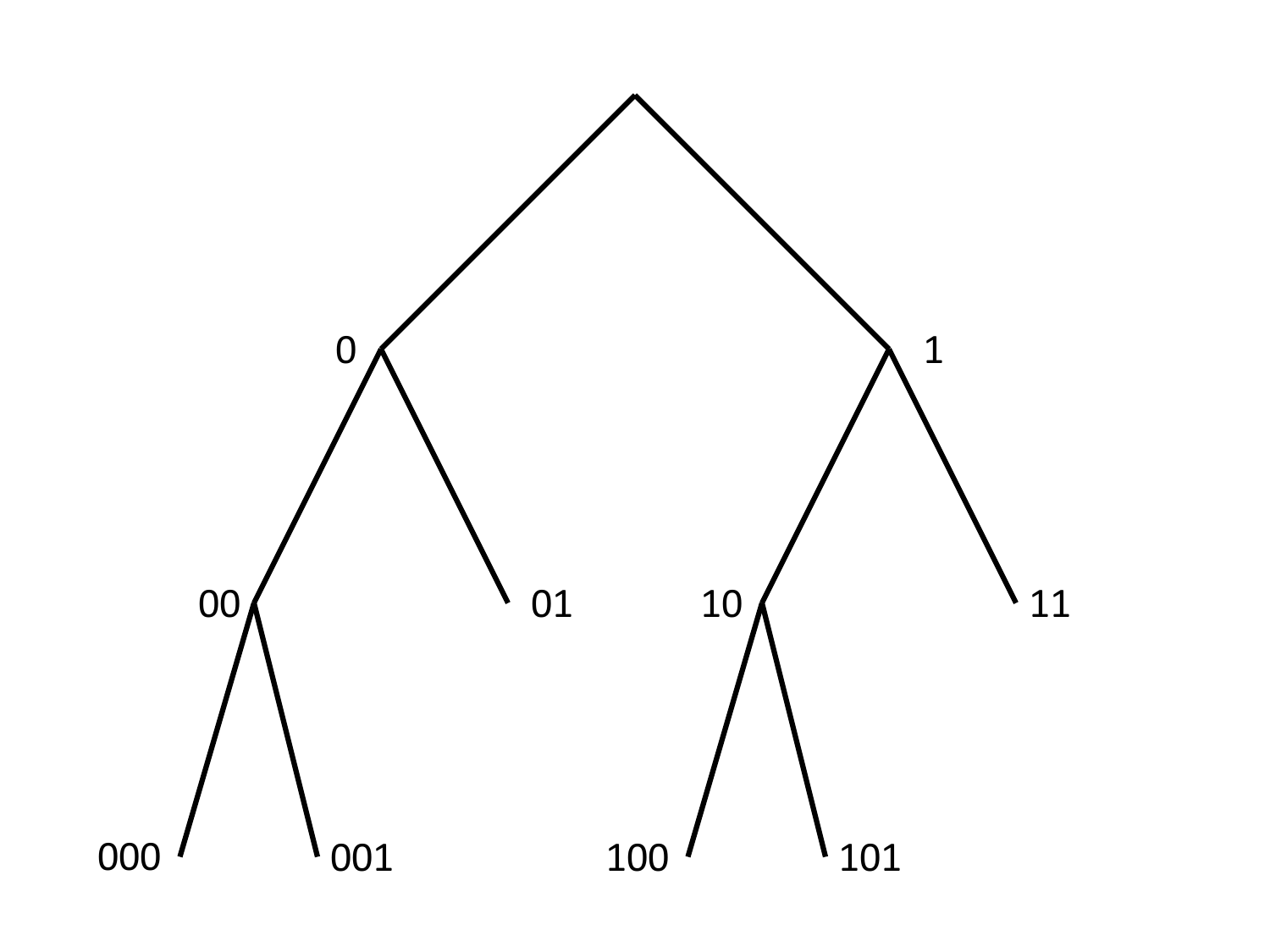}
	\caption{\small An example of a binary tree as a subset of $\{0,1\}^\star$.}
	\label{fig:binary_tree}
\end{figure}
R\'emy's algorithm begins by setting $T_1$
to be the unique binary tree
$\aleph$ with $3$ vertices (a root
and two leaves).  Supposing that $T_1, \ldots, T_n$
have been generated,
the algorithm generates $T_{n+1}$
as follows (see Figure~\ref{fig:Remy_move_1}, Figure~\ref{fig:Remy_move_2},
and Figure~\ref{fig:Remy_move_3} for a depiction of the steps that make up
a single iteration of the algorithm).
\begin{itemize}
\item
Pick a
vertex $v$ of $T_n$ uniformly at random.
\item
Cut off the subtree of $T_n$ rooted at $v$ and set it aside.
\item
Attach a copy of the tree $\aleph$ with $3$ vertices to the end of the edge
in $T_n$ that previously led to $v$.
\item
Re-attach the subtree that was rooted at $v$ in $T_n$ uniformly at random
to one of the two leaves in the copy of $\aleph$.
\end{itemize}
We call the two new vertices that are produced in the above iteration
{\em clones} of $v$. 

\begin{figure}[ht]
		\includegraphics[width=6.5cm]{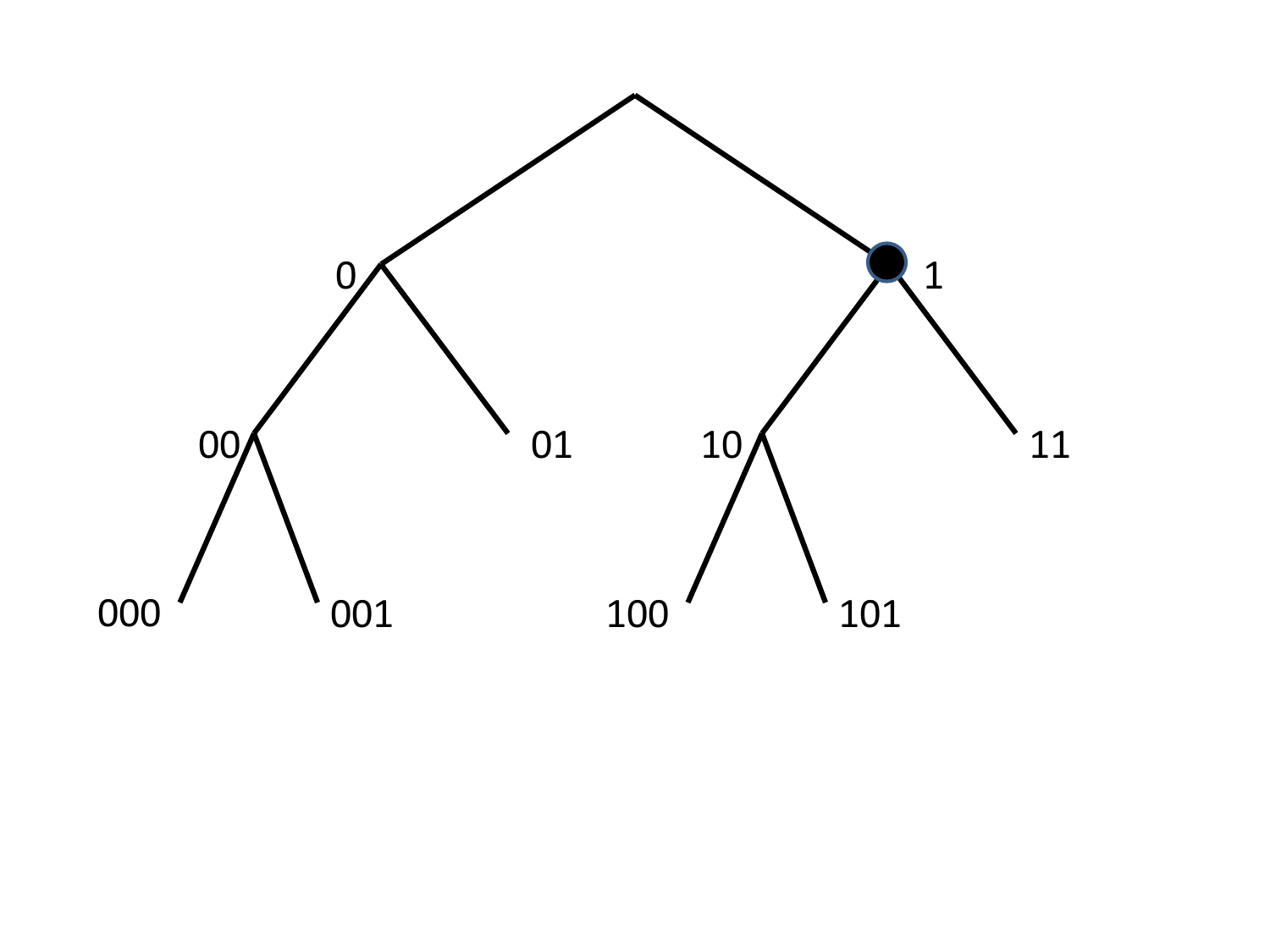}
	\vspace{-1cm}\caption{\small First step in an iteration of R\'emy's algorithm: pick a  vertex $v$ uniformly at random.}
	\label{fig:Remy_move_1}
\end{figure}
\begin{figure}[ht]
	\centering
		\includegraphics[width=6.5cm]{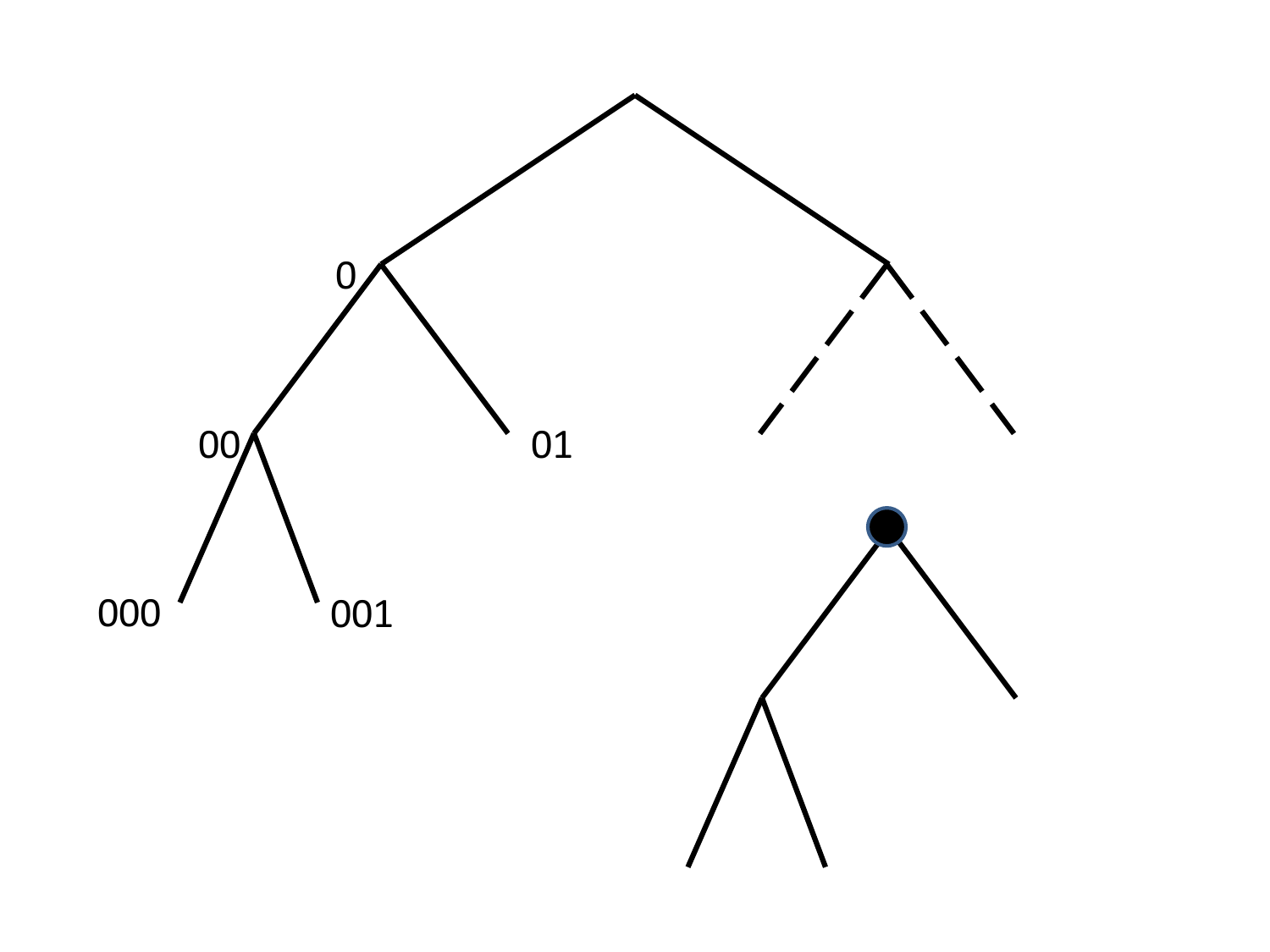}
	\caption{\small Second step in an iteration of R\'emy's algorithm: 
	cut off the subtree rooted at $v$
	and attach a copy of $\aleph$ to the end of the
	edge that previously led to $v$.}
	\label{fig:Remy_move_2}
\end{figure}
\begin{figure}[ht]
	\centering
		\includegraphics[width=6.5cm]{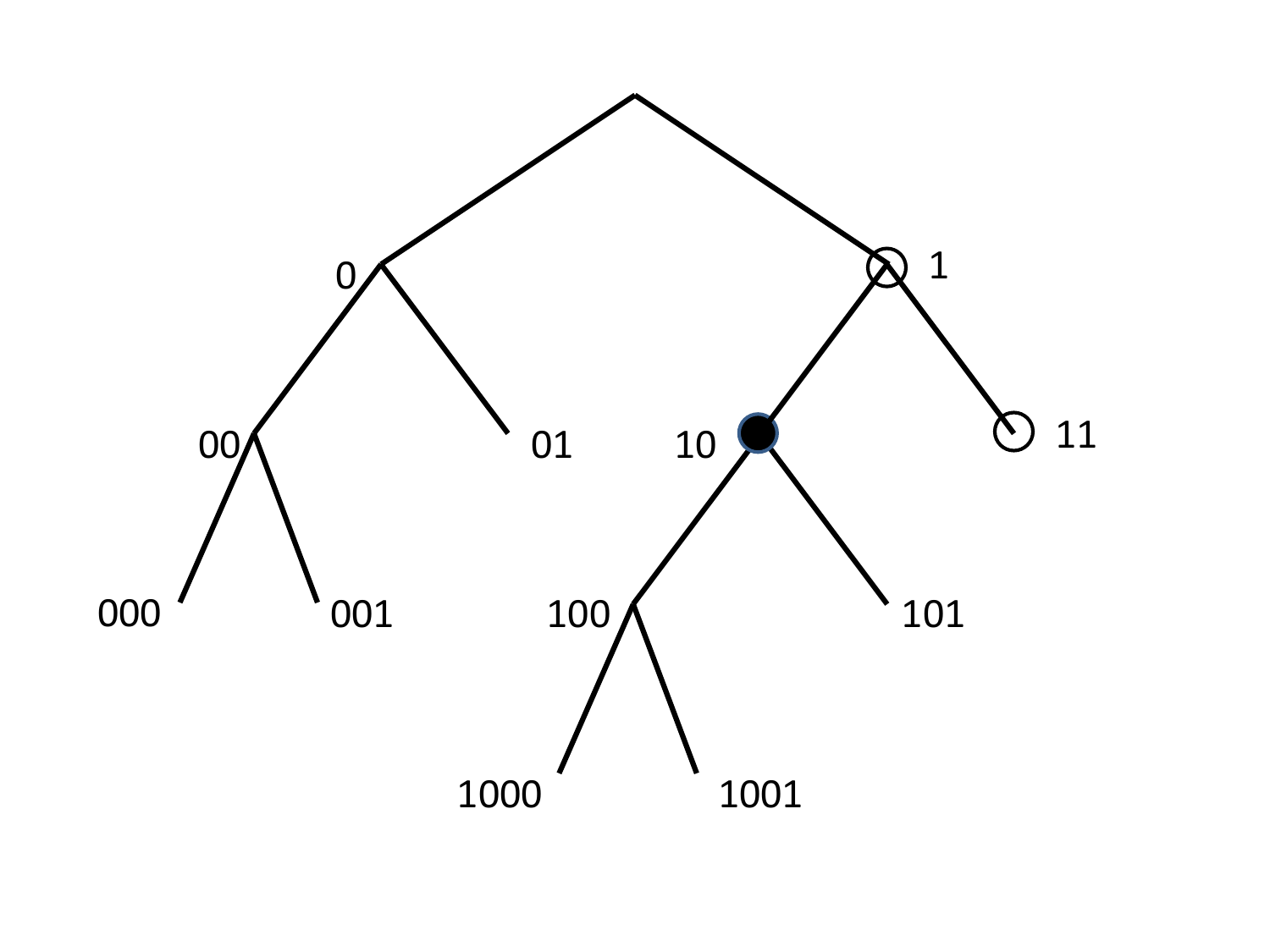}
	\caption{\small Third step in an iteration of R\'emy's algorithm: 
	re-attach the subtree rooted at $v$ 
	to one of the two leaves of the copy of $\aleph$, and re-label the vertices
	appropriately.  The solid circle 
	is the new location of $v$ and
	the open circles are the  clones of $v$.}
	\label{fig:Remy_move_3}
\end{figure}

It is not too difficult to see that the algorithm does 
produce uniformly distributed binary trees.  Indeed,
suppose that the algorithm is modified so that it
starts with the leaves of $\aleph$ labeled with $1$ and $2$,
with each of the two labelings being equally likely,
a random leaf--labeled tree that we denote by $\tilde T_1$.   
Suppose further that we begin 
the $(n+1)^{\mathrm{st}}$ step with a leaf--labeled 
binary tree $\tilde T_n$ that has
$n+1$ leaves labeled with $[n+1] := \{1, \ldots, n+1\}$ in some order and that this step
produces a random leaf--labeled binary tree 
$\tilde T_{n+1}$ labeled with $[n+2]$ as follows.
\begin{itemize}
\item
Use the same randomization as in the algorithm described above to produce
a tree with a single new leaf.
\item
Leave the labels of the old leaves unchanged.
\item
Label the new leaf with $n+2$.  
\end{itemize}
It will certainly
suffice to show that this enhanced algorithm produces a sequence
$\tilde T_1, \tilde T_2, \ldots$ such that for all $n \in \bN$
the random leaf--labeled binary tree $\tilde T_n$ is uniformly
distributed on the set of binary trees with $2n+1$ vertices that have
their $n+1$ leaves labeled by $[n+1]$.  This, however,
is almost immediate from an inductive argument and the observation that
in order for the value of $\tilde T_{n+1}$ 
to be a particular labeled binary tree,
there is a unique possibility for 
the value of $\tilde T_n$, 
the choice of vertex
$v$ to clone, 
and the left-right choice for re-attaching the 
subtree below $v$; see \cite{MR803997, MR1331596} for more details.

Following \cite{MR803997, MR1331596}, 
we note that this argument also shows that if we
let $p_{n}$ be the common value of $\bP\{\tilde T_n = \tilde \tt\}$
as $\tilde \tt$ ranges over the binary trees with $2n+1$ vertices and their  $n+1$ leaves labeled by
$[n+1]$, then $p_{n+1} = \frac{1}{2n+1} \frac{1}{2} p_n$, so that
$p_n = 
\frac{1}{1 \times 3 \times \cdots \times (2n - 1)}\frac{1}{2^n}$.  
It follows that the number of binary trees with $2n+1$ vertices and their $n+1$ leaves labeled by
$[n+1]$ is
\[
(1 \times 3 \times \cdots (2n - 1)) 2^n
=
\frac{(2n)!}{n!},
\]
and so the number of binary trees with $2n+1$ vertices and $n+1$ leaves is
\[
\frac{(2n)!}{(n+1)! n!}
= 
C_n,
\]
as expected.

As well as counting the number of binary trees with 
$2n+1$ vertices for $n \in \bN$,
the Catalan number $C_n$ counts the number of functions 
$f:\{0,1,\ldots,2n\} \to \bN_0$
such that $f(0) = f(2n) = 0$ and $f(k+1) = f(k) \pm 1$ for $0 \le k < 2n$.
It is shown in \cite{MR2042386} that there are particular bijections 
$\phi_n$ (sometimes credited to {\L}ukasiewicz or
Dwass) between the former and latter sets such that if
$f_n := \phi_n(T_n)$, then 
$(n^{-\frac{1}{2}} f_n(\lfloor 2nt\rfloor))_{t \in [0,1]}$ converges almost
surely in the supremum norm to a standard Brownian excursion
A similar result is given in \cite[Exercise 7.4.11]{MR2245368}
where $T_n$ is represented as a function from $\{0, 1, \ldots , 4n\}$
to $\bN_0$ using a coding where one ``walks around the outside'' of the tree
visiting left children before right children 
(so that each edge is traversed twice, leaves are visited once,
and other vertices are visited three times),
and recording the distance from the
root to the vertex visited at each step 
-- this coding, or a minor modification of it, is sometimes called the
Harris path of the tree.

The standard Brownian excursion
induces a random metric space that is, up to a scaling factor, Aldous' Brownian
continuum random tree (CRT) 
\cite{MR1085326, MR1166406, MR1207226}.  
More precisely, if $(E_t)_{t \in [0,1]}$ is the
standard Brownian excursion, then 
$d(s,t) := E_s + E_t - 2 \min_{u \in [s,t]} E_u$, $s,t \in [0,1]$ defines
a pseudo-metric on $[0,1]$ that becomes a metric on the collection of
equivalence classes for the equivalence relation 
$s \equiv t \Leftrightarrow d(s,t) = 0$, and the latter random metric space
is a random $\bR$-tree that is, by definition, a scaled version of
the Brownian CRT (see \cite{MR2351587} for a general treatment
of $\bR$-trees directed at probabilists). This definition carries with it
a natural rooting and hence a natural genealogical structure:
the  most recent common ancestor of the equivalence classes containing
$s$ and $t$ is the equivalence class of the
almost surely unique $v \in [s,t]$ such that 
$E_v = \min_{u \in [s,t]} E_u$.  The Brownian CRT with this
rooting is almost surely binary
in the sense that almost surely for all $r,s,t \in [0,1]$ 
coming from distinct equivalence classes the most recent common ancestors of
the pairs $(r,s), (r,t), (s,t)$ are not all equal.  Moreover, this construction
also endows the Brownian CRT with a natural planar structure: for 
$s, t \in [0,1]$ coming from distinct equivalence classes, the equivalence
class containing $s$ may be consistently declared to be
below and to the left of the most recent common ancestor 
of the two equivalence classes (and the equivalence
class containing $t$ is below and to the right) if 
$\min\{q : \min_{u \in [q,s]} E_u = E_s\} 
< \min\{r : \min_{v \in [r,t]} E_v = E_t\}$
(in other words, the time parameter in the Brownian excursion induces
a traversal of the points of the Brownian CRT that starts and
ends at the root, and we say that one equivalence class
is below and to the left of the most recent common ancestor
it shares with another equivalence class if this traversal
encounters the former equivalence class before the latter).

Conversely, it is observed in \cite{MR1714707}
that if one samples i.i.d.\ uniformly distributed points $U_0, U_1, \ldots$
from $[0,1]$ and lets $\hat T_n$ be the binary tree spanned by the equivalence
classes of $U_0, \ldots, U_n$ for $n \in \bN$ (more fully, one takes the
subtree of the re-scaled Brownian CRT thought of as a random $\bR$-tree
but equipped with the additional rooting and left--right ordering
described above, forgets the metric structure on the subtree, but keeps
the rooting and left--right ordering), then $(\hat T_n)_{n \in \bN}$
has the same distribution as $(T_n)_{n \in \bN}$; that is,
$(\hat T_n)_{n \in \bN}$ is an instance of R\'emy's chain.

As we shall explain soon,
these last two results and several more
are parallels of classical results
about the simplest P\'olya urn scheme in which one starts
with an urn containing one black and one white ball and at each step
one picks a ball uniformly at random and replaces it along
with another of the same color.

If we write $N_n$ for the number
of new black balls that have been added to the urn up to 
and including the $n^{\mathrm{th}}$ step of the P\'olya
urn chain, then $((N_n, n - N_n))_{n \in \bN}$
is a Markov chain with the following properties.  For each $n \in \bN$
the random variable $N_n$ is uniformly distributed on $\{0,1,\ldots,n\}$
and $N_n/n$ converges almost surely as $n \to \infty$ to a random variable
$U$ that is uniformly distributed on the interval $[0,1]$.
If $(X_n)_{n \in \bN}$ is a sequence of $\{0,1\}$-valued random variables
that are conditionally independent given $U$ with 
$\bP\{X_n = 1 \, | \, U\} = U$, then \cite{MR0362614} 
$(N_n)_{n \in \bN}$ has the same
distribution as $(X_1 + \cdots + X_n)_{n \in \bN}$. 
It follows from this observation and the Hewitt--Savage
zero--one law that the tail $\sigma$-field of the 
P\'olya urn chain is generated up to null sets by the random variable $U$.
By the martingale convergence theorem,
the vector space of bounded harmonic functions for the P\'olya urn chain
(that is, the {\em Poisson boundary} of the chain)
can thus be identified with $L^\infty$ of the unit interval 
equipped with Lebesgue measure.
Another consequence is the well-known fact that the colors
of the successive balls form an exchangeable sequence and so
the backward dynamics of the P\'olya urn chain from step $n$
to step $n-1$ can be thought of as removing one of the $n$ added balls
present at step $n$ uniformly at random and discarding it.

We will show that the backward transitions of the R\'emy chain are as follows.
\begin{itemize}
\item
Pick a leaf uniformly at random.
\item
Delete the chosen leaf and its sibling (the sibling may or may not be a leaf).
\item
Close up the gap if there is one (there will be a gap if the
sibling is not a leaf).
\end{itemize} 
Note how these dynamics are  reminiscent of the backward transitions of the
P\'olya urn chain.
It is a consequence of the exchangeability inherent in these dynamics 
and the Hewitt--Savage zero--one law that the tail $\sigma$-field of
the R\'emy chain is generated up to null sets by the limiting
Brownian CRT augmented by the additional
rooting and left--right ordering described above. More precisely, we
may assume that the entire R\'emy chain has been 
built from a Brownian excursion
(equivalently, the augmented Brownian CRT) 
and an independent, identically distributed sequence 
$(U_k)_{k \in \bN_0}$ of random variables that are each uniformly distributed
on $[0,1]$. If the first $n+1$ of these random variables are 
permuted in any way, then the values of the R\'emy chain 
from time $n$ onwards are unchanged, and so the Hewitt--Savage 
zero--one law gives that the tail $\sigma$-field of the 
R\'emy chain is, up to null sets, contained in the 
$\sigma$-field generated by the augmented Brownian CRT.  
Conversely, since one can build the Brownian CRT as an almost sure limit 
(as $n \to \infty$) of the rescaled R\'emy chain, 
the tail $\sigma$-field is equal to the 
$\sigma$-field generated by the augmented Brownian CRT up to null sets. 
Hence the Poisson boundary of the R\'emy chain can
be identified with $L^\infty$ of a space of 
suitably defined ``rooted, planar, binary''
$\bR$-trees equipped with
the distribution of the augmented Brownian CRT or, equivalently,
with $L^\infty$ of the space of continuous excursion paths indexed by
$[0,1]$ equipped with the standard Brownian excursion measure.

The R\'emy chain is not the only Markov chain which at step
$n$ produces uniformly distributed binary trees with
$2n+1$ vertices. Another example is the Markov chain proposed 
in \cite{MR2060629} which, unlike the R\'emy chain, has
the property that the state at time $n$ is a subtree
of the state at time $n+1$ for all $n \in \bN$. 
The Poisson boundary of this chain, 
which was described in \cite{MR2869248}, 
turns out to be quite different from that of the R\'emy chain.

The object of the present paper is to go further 
and investigate the {\em Doob--Martin compactification} 
of the R\'emy chain. Before giving a 
formal definition of the Doob--Martin compactification 
in Section \ref{S:Martin_general}, 
let us illustrate the concept with the archetypal example 
of the P\'olya urn chain.
Given $(\bb,\ww) \in (\bN_0 \times \bN_0) \setminus \{0,0\}$, let
$N_1^{(\bb,\ww)}, \ldots, N_{\bb+\ww}^{(\bb,\ww)}$ 
be the bridge process obtained by conditioning the initial segment
$N_1, \ldots, N_{\bb+\ww}$ of the P\'olya urn chain on the event
$\{N_{\bb+\ww} = \bb\}$.  The backward transitions of such a
bridge are the same as those of the P\'olya urn chain itself
and it is not hard to show that if $((\bb_k,\ww_k))_{k \in \bN}$ is
a sequence such that $\bb_k + \ww_k \to \infty$ as $k \to \infty$, then
the finite-dimensional distributions of the corresponding bridges converge
if and only if $\lim_{k \to \infty} \frac{\bb_k}{\bb_k + \ww_k} \in [0,1]$
exists.  It is a classical result \cite[Chapter 10]{MR0407981} 
that a sequence 
$((\bb_k,\ww_k))_{k \in \bN}$ such that 
$\bb_k + \ww_k \to \infty$ as $k \to \infty$
converges in the Doob--Martin compactification  of the P\'olya urn chain
if and only if 
$\lim_{k \to \infty} \frac{\bb_k}{\bb_k + \ww_k} \in [0,1]$
exists and, as we recall in Section~\ref{S:Martin_general}, 
a general result from 
\cite{MR0426176} establishes the equivalence between convergence
of bridges and convergence in the Doob--Martin compactification
under suitable conditions.  It follows that the Doob--Martin boundary
of the P\'olya urn chain is (homeomorphic to) the unit interval $[0,1]$.
There is thus a nonnegative harmonic function associated with each
point $u \in [0,1]$ and the corresponding Doob $h$-transform process
can be interpreted as $(N_n)_{n \in \bN}$ conditioned on the event
$\{U=u\}$.  As one would expect, the distribution of the 
Doob $h$-transform process is nothing other than that of
the process of partial sums
of independent, identically distributed Bernoulli random variables
with success probability $u$.

We will investigate the bridges of the R\'emy chain and thereby identify 
its Doob--Martin boundary. This boundary
of the space of (finite) binary trees determines the
compact convex set of nonnegative harmonic functions 
normalized to take the value $1$ at the binary tree with two leaves.
We show that the set of extreme points of the the latter compact convex set
corresponds bijectively to the Doob--Martin boundary, and hence
the boundary delineates all the ways that the R\'emy chain
can be conditioned to ``behave at infinity'' in such a way that any randomness 
disappears asymptotically in the sense that the tail $\sigma$-field of the
conditioned chain is trivial.

We will show that a sequence of finite binary trees with 
the number of vertices going to infinity converges in the Doob--Martin
topology if and only if for all $m$ the sequence of random binary trees
spanned by $m+1$ leaves sampled uniformly at random from  those of
the $n^{\mathrm{th}}$ tree in the original sequence
converges in distribution as $n \to \infty$.  Moreover,
two convergent sequences converge to the same limit if and only if
the corresponding limit distributions of these ``sampled subtrees''
are the same for all $m$.  (The analogous fact is also true
for the P\'olya urn: a sequence of states converges
in the Doob--Martin topology if and only if for any $m$ when we sample $m$
balls uniformly at random from the urn composition specified
by the $n^{\mathrm{th}}$ state, 
the distribution of the number of black balls 
in the sample converges as $n \to \infty$.) 
This type of convergence of a sequence of
large combinatorial objects in terms of the convergence in distribution
of randomly sampled sub-objects of a given but arbitrary size is
similar to a notion of convergence of finite graphs investigated
in the theory of graph limits where a sequence
of graphs with increasing numbers of vertices converges if for each $m$
the distributions of the random finite graphs induced
by $m$ vertices sampled uniformly at random converge 
(see \cite{MR3012035} for a recent
monograph and \cite{MR2426176, MR2594615, MR2455626, MR2925382, 
MR2463439, MR2274085, MR2373263} for some examples of papers
in this area).  
A binary tree encodes two partial orders
on its set of vertices (one vertex can be below and to the left
(respectively, right) of another vertex), and so the work in \cite{MR2886098} 
on limits of large partially ordered sets is particularly close
in spirit to our work.  A further connection between our
work and graph limits is the result of \cite{dmtcs:1289}
that the above notion of graph convergence is nothing other than
convergence in the Doob--Martin topology for the graph-valued
{\em Erd\H{o}s--R\'enyi  chain} in which at each step an
additional vertex is added with the possible edges 
connecting it to each of the existing vertices
independently present with probability $p$ and absent with
probability $1-p$ for some fixed $0 < p < 1$ (the Doob--Martin compactification
does not depend on the value of $p$).

One of the major achievements of the
theory of graph limits has been to obtain concrete representations
of the limit objects corresponding to a convergent sequence of graphs
as so-called {\em graphons}. A graphon is a symmetric Borel function
$K:[0,1]^2 \to [0,1]$ and a random graph with the distribution
of the limit of the randomly sampled subgraphs of size $m$ corresponding to
a convergent sequence of graphs is obtained by choosing
$m$ points $U_1, \ldots, U_m$ uniformly at random
from $[0,1]$ and connecting vertex $i$ and $j$ with
conditional probability $K(U_i, U_j)$.  

In our main result, Theorem~\ref{T:main}, 
we obtain a similar concrete representation
of a point in the Doob--Martin boundary of the R\'emy chain 
as a rooted $\bR$-tree $\SSS$ equipped with a probability measure $\mu$ and
a function $V: \SSS^2 \to [0,1]$.  The limit in distribution
of the subtrees spanned
by $m+1$ uniformly chosen leaves is obtained by, loosely speaking,
looking at the subtree of $\SSS$ spanned by independent random points
$\xi_1, \ldots, \xi_{m+1}$ with distribution $\mu$ and declaring that 
with probability $V(\xi_i, \xi_j)$ leaf $i$ is below
and to the left while $j$ is below and to the right of the
most recent common ancestor of leaves $i$ and~$j$.  
Like all transient Markov chains, 
the R\'emy chain has the property that
$T_n$ converges almost surely as $n \to \infty$ in the Doob--Martin topology to
a random element of the Doob--Martin boundary.  The distribution
of the limit may be identified with that of the augmented
Brownian CRT described above: the underlying $\bR$-tree and
its root come from the Brownian excursion, and the probability
measure on the $\bR$-tree is the one lifted by the Brownian excursion
from Lebesgue measure on $[0,1]$. In this case the function $V$ takes
values in the set $\{0,1\}$ and is determined by the left--right
ordering coming from the Brownian excursion.  We will see
that it is not always possible to have the left--right
ordering be induced from one on the underlying $\bR$-tree
$\SSS$ and that cases do arise where it is necessary to
work with functions $V$ that take values strictly between $0$
and $1$.

Briefly, the strategy of the proof of Theorem \ref{T:main} 
will be as follows. 

(i) First we determine the  
backward transition dynamics of the R\'emy chain in 
Section ~\ref{S:infinite_bridges}. 
Understanding the Doob--Martin compactification
is equivalent to understanding all Markov chains 
with initial state $\aleph$ that have these backward transition dynamics.
We call any such chain $(T_n^\infty)_{n \in \bN}$
an {\em infinite R\'emy bridge}: the class
of infinite R\'emy bridges corresponds bijectively with the class of Doob $h$-transforms of the
original R\'emy chain as $h$ ranges over the 
nonnegative harmonic functions for the original chain normalized
so that $h(\aleph) = 1$.  This class of nonnegative harmonic functions
is a compact convex set, and an arbitrary such function has
a unique representation as an integral over extremal elements.  For a general Markov chain,
an extremal nonnegative harmonic function corresponds to a point in the Doob--Martin boundary, but
there may be points in the Doob--Martin boundary that
correspond to harmonic functions which are not extremal.  We show that this
is not the case for the R\'emy chain, and it follows that
the elements of the  Doob--Martin boundary of the R\'emy chain 
correspond bijectively to the infinite R\'emy bridges that are {\em extremal}
in the sense that they are not nondegenerate mixtures 
of infinite R\'emy bridges (equivalently, to the infinite R\'emy bridges 
that have trivial tail $\sigma$-fields).  

(ii) A key tool for obtaining a concrete characterization of
the extremal infinite R\'emy bridges will be the introduction of an auxiliary labeling
of the $n+1$ leaves of the tree $T_n^\infty$ by $[n+1]:= \{1,\ldots,n+1\}$ 
that has the properties that the labeling is uniformly distributed over the
$(n+1)!$ possible labelings for each $n$ and the new leaf
added at step $n+1$ is labeled with $n+2$ while the other leaves
keep the labels they had at step $n$.  
Such a labeling scheme is ``projective'' in the sense that the leaf--labeled
subtree of $T_{m+n}^\infty$ spanned by the leaves with labels
in $[m+1]$ coincides with the leaf--labeled version of $T_m^\infty$; 
more precisely, $T_m^\infty$ embeds into $T_{m+n}^\infty$ in the manner
defined in Section~\ref{DMker} via an injective map from the
vertices of $T_m^\infty$ into the vertices of $T_{m+n}^\infty$
that maps leaves to leaves in such a way that the image of the leaf
labeled $k$ in $T_m^\infty$ is mapped to the leaf labeled
$k$ in $T_{m+n}^\infty$ for $k \in [m+1]$. As we observe in
Section~\ref{S:N_tree}, for any
$i,j,k \in [m+1]$ there are twelve possibilities for
how the leaves labeled $i,j,k$ in the tree $T_m^\infty$  
sit in relation to each other; for example, one possibility
is that the most recent common ancestor of $i$ and $j$ is
a descendant of the most recent common ancestor of
$i$ and $k$ 
which is also the most recent common ancestor of
$j$ and $k$,
that $i$ is below and to the left 
and $j$ is below and to the right
of their most recent common ancestor,
that $i$ is below and to the left 
and $k$ is below and to the right
of their most recent common ancestor, 
and
that $j$ is below and to the left 
and $k$ is below and to the right
of their most recent common ancestor. 
Moreover, knowing which of these possibilities
holds for each triple $i,j,k$ uniquely determines 
the tree $T_m^\infty$ and its leaf labels.  A key feature of this labeling is
that the relative positions of the leaves labeled $i,j,k$ in the tree
$T_m^\infty$ is the same as the relative positions of the leaves
labeled $i,j,k$ in the tree $T_{m+n}^\infty$.
Because of this consistency there is a well-defined random
array indexed by $\{(i,j,k) \in \bN^3: i,j,k \; \text{distinct}\}$
that for any indices $(i,j,k)$ records for all $m$ such that
$\{i,j,k\} \subseteq [m+1]$ which of the twelve possibilities
holds for the relative positions of the leaves labeled $i,j,k$
in the tree $T_m^\infty$. This random array is jointly exchangeable.
It is possible to reconstruct the entire leaf--labeled version of
the infinite R\'emy bridge $(T_n^\infty)_{n \in \bN}$ from this array, and hence
the infinite R\'emy bridge itself by then simply discarding the leaf labels.
The infinite R\'emy bridge
is extremal if and only if this jointly exchangeable random array is ergodic
in the usual sense for jointly exchangeable random arrays. 

(iii) In Sections 
\ref{S:R_tree}, \ref{S:probmeas} and \ref{S:leftright}
we use ideas related to those in \cite{MR3851828, MR3112436}
and the Aldous--Hoover--Kallenberg theory of jointly exchangeable random arrays
to obtain a concrete description of the jointly exchangeable random arrays that
can arise from extremal infinite R\'emy bridges, and
it is the ingredients in this description that
appear in our above sketch of the statement of Theorem~\ref{T:main}. 
The $\{0,1\}$-valued random variables $W(\xi_i, U_i, \xi_j, U_j)$ figuring 
in the actual statement of Theorem~\ref{T:main}  indicate
whether leaf $i$ is below and to the left while $j$ is below and to the right 
of the most recent common ancestor of leaves $i$ and $j$,
with the above-mentioned $V(\xi_i, \xi_j)$ as the corresponding probabilities.

\bigskip
\noindent
{\bf NOTE: After this paper was published (see \cite{MR3601650}) we became aware of two points
that required some correction.  Neither of these affects the validity of the succeeding results.  We identify
the gaps at the places they appear by footnotes in Section \ref{S:N_tree} and show
how they can be repaired in Section~\ref{S:corrigenda}.}
\bigskip

\section{Background on Doob--Martin compactifications}
\label{S:Martin_general}

We restrict the following sketch of Doob--Martin 
compactification theory for discrete time Markov chains
to the situation of interest in the present paper. The
primary reference is \cite{MR0107098}, but useful reviews
may be found in 
\cite[Chapter 10]{MR0407981},
\cite[Chapter 7]{MR0415773}, 
\cite{MR1463727},
\cite[Chapter 7]{MR2548569},
\cite[Chapter III]{MR1796539}.

Suppose that  $(X_n)_{n \in \bN_0}$ is a discrete time Markov chain
with countable state space $E$ and transition matrix $P$.  
Suppose in addition that $E$ can be partitioned
as $E = \bigsqcup_{n \in \bN_0} E_n$, where
$E_0 = \{e\}$ for some distinguished state $e$, 
each set $E_n$ is finite, and the transition matrix
$P$ is such that $P(k,\ell) = 0$ unless $k \in E_n$ and 
$\ell \in E_{n+1}$ for some $n \in \bN_0$.
Define the {\em Green kernel} or {\em potential kernel} $G$ of
$P$ by 
\[G(i,j) := \sum_{n=0}^\infty P^n(i,j) 
= \bP^i\{X_n = j \; \text{for some $n \in \bN_0$}\}
=:\bP^i\{\text{$X$  hits $j$}\},
\] 
$i,j \in E$,
and assume that $G(e,j) > 0$ for all $j \in E$, so
that any state can be reached with positive probability starting from $e$.
The R\'emy chain belongs to this class.  The
state space $E$ of the R\'emy chain is the set of all binary trees, 
the distinguished state $e$ is the binary tree $\aleph$ with $3$ vertices,
and $E_n$ is the set of binary trees with $2n+3$ vertices.

If $Z$ is a $\bP^e$-a.s. bounded random variable 
that is measurable with respect to the tail $\sigma$-field of
$(X_n)_{n \in \bN_0}$, then
$\bE^e[Z \, | \, X_0, \ldots, X_n] = h(X_n)$ for some bounded harmonic function
$h$; that is $\sum_{j \in E} P(i,j) h(j) = h(i)$ for $i \in E$.  
By the martingale convergence theorem,
$\lim_{n \to \infty} h(X_n) = Z$ $\bP^e$-a.s.  
Conversely, if $h$ is a bounded harmonic function, then
$\lim_{n \to \infty} h(X_n)$ exists $\bP^e$-a.s. 
and the limit random variable is $\bP^e$-a.s. equal to a random variable
that is measurable with respect to the tail $\sigma$-field of 
$(X_n)_{n \in \bN_0}$.  

In order to characterize the bounded harmonic functions (and hence
the tail $\sigma$-field), it certainly suffices to determine what 
the nonnegative harmonic functions are.  The key to doing so is the
introduction of the {\em Doob--Martin kernel with reference state $e$} given by
\[ 
K(i,j) := \frac{G(i,j)}{G(e,j)} = 
\frac{\bP^i\{\text{$X$  hits $j$}\}}{\bP^e\{\text{$X$  hits $j$}\}}.
\] 
Observe that
\[
\sum_{j \in E} P(i,j) K(j,k)
=
K(i,k), \quad i \ne k,
\]
and so the function $K(\cdot,k)$ is, in some sense, ``almost harmonic'' and
becomes closer to being harmonic as $k$ ``goes to infinity''.
With this intuition in mind, 
it is natural to investigate sequences
$(j_n)_{n \in \bN}$ in $E$ such that the sequence of real numbers
$(K(i,j_n))_{n\in\bN}$ converges for all $i\in E$.

These considerations lead to the following construction.
If $j,k \in E$ with $j \ne k$,
then $K(\cdot,j) \ne K(\cdot,k)$, and so $E$ can be identified
with the collection of functions $K(\cdot,j)$, $j \in E$.  
Note that
\[
0 
\le 
K(i,j) 
\le 
\frac{\bP^i\{\text{$X$  hits $j$}\}}{\bP^e\{\text{$X$  hits $i$}\} \bP^i\{\text{$X$  hits $j$}\}}
=
\frac{1}{\bP^e\{\text{$X$  hits $i$}\}},
\]
and so the set of functions $\{K(\cdot,j) : j\in E\}$
is a pre-compact subset of $\bR_+^E$ equipped with the
usual product topology. Its closure $\bar E$
is the {\em Doob--Martin compactification} of $E$. 
The set $\partial E := \bar E \setminus E$
is the {\em Doob--Martin boundary} of $E$.
By construction, a sequence $(j_n)_{n \in \bN}$ in $E$ converges
to a point in $\bar E$
if and only if the sequence of real numbers
$(K(i,j_n))_{n\in\bN}$ converges for all $i\in E$, and each function
$K(i,\cdot)$ extends continuously to $\bar E$. The
resulting function $K: E \times \bar E \rightarrow \bR$ is
the {\em extended Doob-Martin kernel}.  

A specific subset $\dmin E$, the \emph{minimal} boundary, of the full boundary $\partial E$ 
is of particular importance from a geometric as well as probabilistic point of view. Let
$\bH_{1,+}$ be the set of harmonic functions $h:E\to \bR_+$ with $h(e)=1$. 
This is a compact convex set, and its extreme points are those harmonic functions
$h\in\bH_{1,+}$ with the property that $a g < h$ implies $g=h$ whenever $0 < a < 1$ and $g\in\bH_{1,+}$. 
We have $K(\cdot,y)\in \bH_{1,+}$ for all $y\in\partial E$, and we write $\dmin E$ for the set of those 
boundary points that correspond to extremal harmonic functions. The set $\dmin E$ is a $G_\delta$.
With this notation in place, any 
$h\in\bH_{1,+}$ has a unique representation
\begin{equation*}
  h(x) = \int K(x,y)\, \mu(dy),
\end{equation*}
where $\mu$ is a probability measure that assigns all of its mass to $\dmin E$.

A first major probabilistic consequence 
of the Doob--Martin compactification is that the limit 
$X_\infty:=\lim_{n \rightarrow \infty} X_n$ exists $\bP^e$-almost 
surely in the topology of~$\bar E$ and that the 
distribution of this limit is given by the measure $\mu$ representing
the trivial element $h\equiv 1$ of $\bH_{1,+}$.

In terms of analysis, the vector space $\bH_b$ of bounded harmonic functions
endowed with the supremum norm is a Banach space (the {\em Poisson
boundary} of the Markov chain) and this Banach space is isomorphic
to the $L^\infty$ space associated with the measure space consisting of
$\partial E$ equipped with its Borel $\sigma$-field 
and the probability measure 
given by the distribution of $X_\infty$ under $\bP^e$.
The tail $\sigma$-field of 
$(X_n)_{n \in \bN_0}$ coincides $\bP^e$-almost surely
with the $\sigma$-field generated by $X_\infty$ and so 
the Poisson boundary captures how the process
can ``go to infinity'' and what probabilities are associated with the
various alternatives.

The second consequence of the Doob--Martin compactification is
that not only does it contain information about how the Markov chain
behaves at large times when ``left to its own devices'', but also,
somewhat loosely speaking, how it can be conditioned to behave
at large times. Each $j \in E = \bigsqcup_{n \in \bN_0} E_n$
belongs to a unique $E_n$ whose index $n$ we denote by $N(j)$.  If the chain
starts in state $e$, then $N(j)$ is the only time that there is
positive probability the chain will be in state $j$. Write 
$(X_0^j, \ldots, X_{N(j)}^j)$ for the {\em bridge} obtained by
starting the chain in state $e$ and conditioning it to be in
state $j$ at time $N(j)$.  This process 
is a Markov chain with forward transition probabilities
\begin{align*}
\bP\{X_{n+1}^j = i'' \, | \, X_n^j = i'\}
\ & =\
\frac{
\bP^e\{X_n = i', \, X_{n+1} = i'', \, X_{N(j)} = j\}
}
{
\bP^e\{X_n = i', \, X_{N(j)} = j\}
} \\
& =\ 
\frac{
\bP^e\{ \text{$X$  hits $i'$}\} P(i',i'') \bP^{i''}\{ \text{$X$  hits $j$}\}
}
{
\bP^e\{ \text{$X$  hits $i'$}\} \bP^{i'}\{ \text{$X$  hits $j$}\}
} \\
& =\ 
\frac{
P(i',i'') \bP^{i''}\{ \text{$X$  hits $j$}\} / \bP^e\{\text{$X$  hits $j$}\}
}
{
\bP^{i'}\{ \text{$X$  hits $j$}\} / \bP^e\{\text{$X$  hits $j$}\}
} \\
& =\ 
K(i',j)^{-1} P(i',i'') K(i'',j). \\
\end{align*}
Moreover,
\[
\begin{split}
\bP\{X_n^j = i' \, | \, X_{n+1}^j = i''\}
& =
\frac{
\bP^e\{X_n = i', \, X_{n+1} = i'', \, X_{N(j)} = j\}
}
{
\bP^e\{X_{n+1} = i'', \, X_{N(j)} = j\}
} \\
& = 
\frac{
\bP^e\{X_n = i', \, X_{n+1} = i''\} \bP^{i''}\{X_{N(j)} = j\}
}
{
\bP^e\{X_{n+1} = i''\} \bP^{i''}\{X_{N(j)} = j\}
} \\
& =
\bP^e\{X_n = i' \, | \, X_{n+1} = i''\}, \\
\end{split}
\]
and so $(X_0^j, \ldots, X_{N(j)}^j)$ has the same backward transition
probabilities as $(X_n)_{n \in \bN_0}$.

Suppose now that $(j_k)_{k \in \bN}$ is a sequence
of elements of the state space $E$ that converges to infinity
in the one-point compactification of $E$ or, equivalently,
$N(j_k) \to \infty$ as $k \to \infty$.  As observed in \cite{MR0426176}, such a sequence
$(j_k)_{k \in \bN}$ converges in the Doob--Martin topology if and
only if finite initial segments of the corresponding bridges
converge in distribution.  Moreover, two
sequences of states converge to the same limit if and only if
the limiting distributions of finite
initial segments are the same.  For a sequence
$(j_k)_{k \in \bN}$ that converges to the point $y$ in the Doob--Martin boundary,
the limiting distributions of the 
initial segments define the distribution of an $E$-valued
process $(X_n^{(h)})_{n \in \bN_0}$ that is Markovian with
forward transition probabilities $P^{(h)}$ given by 
\[
P^{(h)}(i,j) := h(i)^{-1} P(i,j) h(j), \quad i,j \in E^{(h)},
\]
where $h(i) = \lim_{k \to \infty} K(i,j_k) = K(i,y)$ 
and 
\[
\begin{split}
E^{(h)} 
& := \{i \in E : h(i) > 0\} \\
& = \{i \in E: \lim_{k \to \infty} \bP\{X_{N(i)} = i \, | \, X_{N(j_k)} = j_k\} > 0\}, \\
\end{split}
\]
and the same backward transition probabilities as $(X_n)_{n \in \bN_0}$.
This Markov chain $(X_n^{(h)})_{n \in \bN_0}$ is an $h$-transform 
using the harmonic function $h$.  Moreover, if $(y_k)_{k \in \bN}$
is a sequence of points in the Doob--Martin boundary, then
$\lim_{k \to \infty} y_k = y$ for some point $y$ in the Doob--Martin boundary
if and only if the initial segments of 
$(X_n^{(K(\cdot, y_k))})_{n \in \bN_0}$
converge in distribution to the corresponding 
initial segments of 
$(X_n^{(K(\cdot, y))})_{n \in \bN_0}$.

We call any Markov process $(Y_n)_{n \in \bN_0}$ with $Y_0 = e$
and the same backward transition probabilities
as $(X_n)_{n \in \bN_0}$ an {\em infinite bridge} for $(X_n)_{n \in \bN_0}$.
The distribution of an infinite bridge is a mixture of distributions of infinite
bridges that have trivial tail $\sigma$-fields, and we call the latter
{\em extremal} infinite bridges. If $(j_k)_{k \in \bN}$ converges to a point $y$
in the Doob--Martin boundary, then the 
corresponding harmonic function $h = K(\cdot,y)$ is extremal if 
and only if the limit infinite bridge  $(X_n^{(h)})_{n \in \bN_0}$
is extremal.

%

\section{The Doob--Martin kernel of the R\'emy chain}
\label{DMker}

We return from the general setting of the
previous section to consideration of the R\'emy chain.  
Given two binary trees $\ss$ and $\tt$ with $2m+1$ and $2(m+n)+1$ vertices,
we wish to derive a formula for the multi-step
transition probability
\[
p(\ss,\tt) := \bP\{T_{m+n} = \tt \, | \, T_m = \ss\}
\]
and hence obtain a formula for the Doob--Martin kernel
with reference state $\aleph$, since
\[
\begin{split}
K(\ss,\tt) 
& := \frac{p(\ss, \tt)}{p(\aleph, \tt)} \\
& = \frac{\bP\{T_{m+n} = \tt \, | \, T_m = \ss\}}{\bP\{T_{m+n} = \tt\}} \\
& = \frac{\bP\{T_{m+n} = \tt, \; T_m = \ss\}}{\bP\{T_m = \ss\} \bP\{T_{m+n} = \tt\}} \\
& = \frac{1}{\bP\{T_m = \ss\}} \bP\{T_m = \ss \, | \, T_{m+n} = \tt\} \\
& = C_m \bP\{T_m = \ss \, | \, T_{m+n} = \tt\}, \\
\end{split}
\]
where we recall that the $m^{\mathrm{th}}$ Catalan number $C_m$ is 
the number of binary trees with $2m+1$ vertices.
For this we need the notion
of one binary tree being embedded in another, and this requires us to introduce
some preliminary definitions.  

To begin, we define a partial order
$<$ on the vertices of a binary tree by declaring that $u < v$
for two vertices $u$ and $v$ if $u \ne v$ and $u$ is on the (unique)
path leading from the root to $v$.  We say that $v$ is
{\em below} $u$.  Given two vertices $x$ and $y$, there is a unique
vertex $z$ such that $z \le x$, $z \le y$, and $w < z$ for any
other vertex $w$ such that $w \le x$ and $w \le y$.  We
say that $z$ is the {\em most recent common ancestor} of $x$ and $y$
and write $z = x \wedge y$.

If $u < v$ and the unique path from $u$ to $v$ passes through the
left (resp. right) child of $u$, then we write $u <_L v$ (resp. $u <_R v$)
and say that $v$ is {\em below and to the left} (resp. {\em below and
to the right}) of $u$.  Note that $<_L$ and $<_R$ are partial orders
with the property that if two vertices of the tree are comparable in
one order, then they are not comparable in the other.  Note also that
$u < v$ if and only if $u <_L v$ or $u <_R v$.

If we think of a binary tree as a subset of $\{0,1\}^*$, then for
two vertices $u = u_1 \ldots u_m$ and $v = v_1 \ldots v_n$ we have:
\begin{itemize}
\item
$u < v$ if and only if $m < n$ and $u_k = v_k$ for $1 \le k \le m$,
\item
the most recent common ancestor $u \wedge v$ of $u$ and $v$ is the vertex
$w = w_1 \ldots w_p$, where $p = \max\{k : u_k = v_k\}$ (where the
maximum of the empty set is $0$) and $w_k = u_k = v_k$ for 
$1 \le k \le p$,
\item
$u <_L v$ if and only if $m < n$, $u_k = v_k$ for $1 \le k \le m$,
and $v_{m+1} = 0$,
\item
$u <_R v$ if and only if $m < n$, $u_k = v_k$ for $1 \le k \le m$,
and $v_{m+1} = 1$.
\end{itemize}

\begin{definition}
An {\em embedding} of a binary tree $\ss$ into a binary tree $\tt$ is
a map from the vertex set of $\ss$ into the vertex set of $\tt$
such that the following hold.
\begin{itemize}
\item
The image of a leaf of $\ss$ is a leaf of $\tt$.
\item
If $u,v$ are vertices of $\ss$ such that
$v$ is below and to the left (resp. right) of $u$,
then the image of $v$ in $\tt$ is below and to the left (resp. right)
of the image of $u$ in $\tt$.
\end{itemize}
Figure~\ref{fig:embedding_example} illustrates this definition.
\end{definition}

\begin{figure}
	\centering
		\includegraphics[width=10cm]{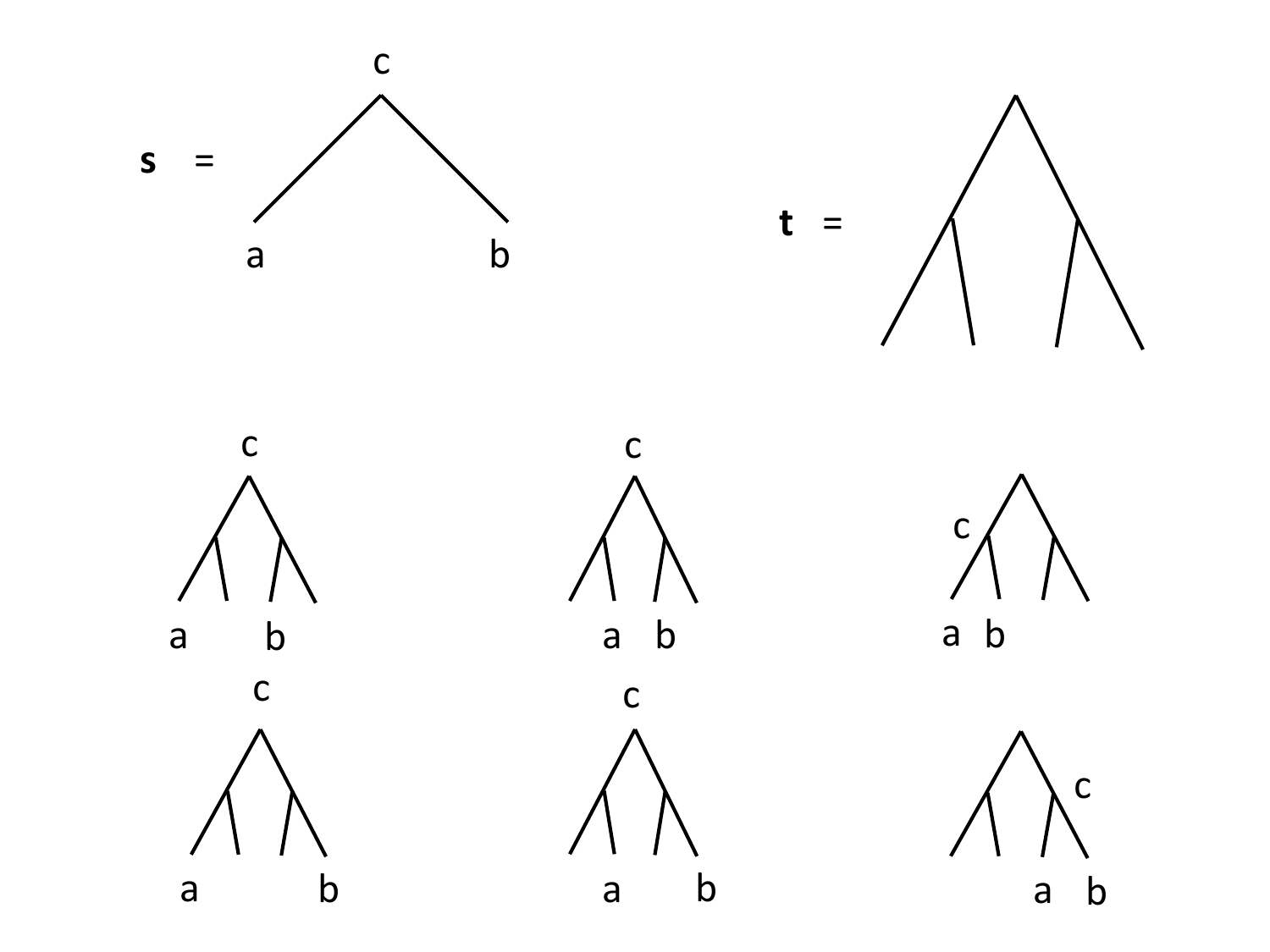}
	\caption{\small All the embeddings of the unique binary  tree 
	$\ss = \aleph$ with $3$
	 vertices into a particular tree $\tt$ with $7$ vertices.}
	\label{fig:embedding_example}
\end{figure}

\begin{remark}
Note that an embedding of $\ss$ into $\tt$ is uniquely determined by the 
images of the leaves of $\ss$, 
because if $x$ and $y$ are vertices of $\ss$, then the image
of the most recent common ancestor of $x$ and $y$ in $\ss$ must be
the most recent common ancestor in $\tt$ of the images of $x$ and $y$.
\end{remark}

\begin{notation}
Write $N(\ss, \tt)$ for the number of embeddings of $\ss$ into $\tt$. For $k=1,2,\ldots$, write $\tt^c_k$ for the complete binary tree with  $2^k$ leaves, that is the binary tree with $2^k$ leaves
such that every leaf is graph distance $k$ from the root.  (In the representation
of binary trees as subsets of $\{0,1\}^\star$, $\tt^c_k$ is the subset consisting of words with length at most $k$
and the leaves are the words with length $k$.)  
\end{notation}

\begin{example}
\label{E:complete_binary_embedding}
We want to identify
the number $N(\ss,\tt^c_k)$ of embeddings of a given tree $\ss$ into $\tt^c_k$, the complete
binary tree with $2^k$ leaves.

It will be useful to introduce the infinite complete binary
tree.  This is the set $\{0,1\}^* \sqcup \{0,1\}^\infty$.  
For distinct points $x$ and $y$ in $\{0,1\}^\infty$
with $x = u_1 u_2 \ldots$ and $y = v_1 v_2 \ldots$, set
$x \wedge y = u_1 \ldots u_h = v_1 \ldots v_h \in \{0,1\}^\star$, where
$h = \max\{g :  u_1 \ldots u_g = v_1 \ldots v_g\}$.
We say that $x$ is below and to the left of $x \wedge y$ and
$y$ is below and to the right of $x \wedge y$ if $u_{h+1}=0$
and $v_{h+1} = 1$.  

Using the same notation, 
put $r(x,y) = 2^{-h}$ and set $r(z,z) = 0$ for $z \in \{0,1\}^\infty$.
Then $r$ is a metric on $\{0,1\}^\infty$ that induces the (compact)
product topology on $\{0,1\}^\infty$.  We can equip $\{0,1\}^\infty$
with the probability measure $\kappa$
that is the product of the uniform probability
measures on each of the factors 
(that is, $\kappa$ is fair coin-tossing measure).
The $\kappa$-measure of any ball with diameter $2^{-\ell}$ is $2^{-\ell}$.

If $x_1, \ldots, x_{m+1}$
are distinct points in $\{0,1\}^\infty$, then these points induce a
(finite) binary tree with $m+1$ leaves in the obvious way:
we may identify the most recent common ancestor of the leaves
corresponding to $x_i$ and $x_j$ with
$x_i \wedge x_j$ and say that the point corresponding to
$x_i$ is below and to the left of the most
recent common ancestor of the points corresponding to $x_i$ and $x_j$ in the reduced tree
if $x_i$ is below and to the left of $x_i \wedge x_j$, etc.
Call this tree $T(x_1, \ldots, x_{m+1})$.  Observe that
$T(x_1, \ldots, x_{m+1}) = T(x_{\pi(1)}, \ldots, x_{\pi(m+1)})$
for any permutation $\pi$ of $\{1,2,\ldots,m+1\}$.
 
Suppose that the tree $\ss$ has $m+1$ leaves.  
If the leaves of an embedding
of $\ss$ into $\tt^c_k$
are $y_i = u_{i1} \ldots u_{ik}$
for $1 \le i \le m+1$ and we set 
$x_i = u_{i1} \ldots u_{ik} u_{i,k+1} u_{i,k+2} \ldots$ for any
choice of $u_{i,k+1},  u_{i,k+2}, \ldots$, $1 \le i \le m+1$, then
$T(x_1, \ldots, x_{m+1}) = \ss$.  Conversely, if 
$x_i = u_{i1} u_{i2} \ldots \in \{0,1\}^\infty$, $1 \le i \le m+1$, 
are such that
$T(x_1, \ldots, x_{m+1}) = \ss$ and 
$r(x_i,x_j) > 2^{-k}$ for $1 \le i \ne j \le m+1$, then
putting $y_i = u_{i1} \ldots u_{ik}$ for $1 \le i \le m+1$
gives the leaves of an embedding of $\ss$ into $\tt^c_k$.

With the notation $x=(x_1,\ldots, x_{m+1})$ it follows that
\[
\begin{split}
& \frac{1}{(m+1)!}
\frac{
\kappa^{\otimes (m+1)} \{x : T(x) = \ss
\text{ and } r(x_i,x_j) > 2^{-k} \text{ for }1 \le i \ne j \le m+1\}
}
{
\kappa^{\otimes (m+1)} \{x : r(x_i,x_j) > 2^{-k} \text{ for }1 \le i \ne j \le m+1\}
} \\
& \quad =
2^{-(m+1) k} N(\ss,\tt^c_k). \\
\end{split}
\]
Indeed, the left hand side counts the fraction of all those of the 
(in total $2^{k(m+1)}$) mappings from $[m+1]$ to $[2^k]$ which 
correspond to an embedding of $\ss$ into $\tt^c_k$.
In particular,
\[
\lim_{k \to \infty} 2^{-(m+1) k} N(\ss,\tt^c_k)
=
\frac{1}{(m+1)!}
\kappa^{\otimes (m+1)} \{(x_1, \ldots, x_{m+1}) 
: T(x_1, \ldots, x_{m+1}) = \ss\}.
\]
\end{example}

\begin{theorem}
\label{T:transition_prob}
Suppose that $\ss$ and $\tt$ are two binary trees with, respectively, 
$2m+1$ and $2(m+n)+1$ vertices.  Then, the probability that the
R\'emy chain transitions from $\ss$ to $\tt$ in $n$ steps is
\[
p(\ss,\tt)
=
n! 
\frac{1}{(2m+1) \times (2m+3) \times \cdots \times (2(m+n) - 1)}
\frac{1}{2^{n}} N(\ss,\tt),
\]
where $N(\ss,\tt)$ is the number of ways of embedding $\ss$ into $\tt$.
\end{theorem}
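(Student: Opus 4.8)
The plan is to compute $p(\ss,\tt) = \bP\{T_{m+n} = \tt \mid T_m = \ss\}$ by exploiting the backward dynamics of the R\'emy chain, which are much simpler than the forward dynamics and are already essentially described in the introduction. Running the chain backward from $\tt$ (which has $2(m+n)+1$ vertices, hence $m+n+1$ leaves) for $n$ steps: at each backward step one picks a leaf uniformly at random, deletes it together with its sibling, and closes up the resulting gap. After $n$ such steps one arrives at a binary tree with $m+1$ leaves, and the event $\{T_m = \ss\}$ under $\bP\{\,\cdot \mid T_{m+n} = \tt\}$ is precisely the event that this $n$-fold backward procedure terminates at a tree isomorphic to $\ss$. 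Thus $\bP\{T_m = \ss \mid T_{m+n} = \tt\}$ is a sum over all length-$n$ sequences of leaf-deletions that reduce $\tt$ to $\ss$, each weighted by the product of the uniform leaf-choice probabilities.

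First I would introduce the leaf-labelled enhancement. Equip $\tt$ with a uniformly random labelling of its $m+n+1$ leaves by $[m+n+1]$; then running the chain backward corresponds to the exchangeable procedure of, at step $k$, discarding the leaf whose label is currently largest (or, equivalently since the array is exchangeable, a uniformly random leaf) together with its sibling. The key combinatorial observation is that a leaf-labelled reduction of $\tt$ down to a leaf-labelled copy of $\ss$ is determined by (i) which embedding of $\ss$ into $\tt$ (in the sense of the Definition in Section~\ref{DMker}) is realised as the image of $\ss$ inside $\tt$ after reduction, and (ii) the order in which the $n$ ``extra'' leaves are peeled off. Here I would use the Remark that an embedding is determined by the images of the leaves, and the fact that in R\'emy's backward step a deleted leaf's sibling always gets contracted away, so the leaves that survive all $n$ backward steps are exactly the images of the $m+1$ leaves of $\ss$ under some embedding. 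This is where I expect to have to be careful: I must check that every one of the $N(\ss,\tt)$ embeddings is in fact reachable as the surviving image, that the number of valid peeling orders is the same ($=n!$) for every embedding, and that along any valid peeling the leaf-choice at step $k$ is uniform over $m+n+2-k$ leaves. The last point should follow because at the backward step from a tree with $j+1$ leaves the choice is uniform over $j+1$ leaves, but exactly one of those choices (deleting a leaf that ``ought'' to survive, or deleting into the wrong sibling structure) is forbidden — I'd want to argue instead via the enhanced labelled chain where the backward step is literally ``remove the highest-labelled leaf and its sibling'', which is deterministic, and then average over labellings.

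Concretely, the cleanest route is: condition on $T_{m+n}=\tt$, attach the uniform leaf labelling, and compute $\bP\{T_m = \ss \mid T_{m+n}=\tt\}$ as (number of leaf-labelled binary trees on $[m+n+1]$ that are isomorphic to $\tt$ and reduce, via the deterministic ``peel the top label'' map iterated $n$ times, to a leaf-labelled tree isomorphic to $\ss$) divided by (total number of leaf-labelled trees isomorphic to $\tt$). Using the count from the introduction that the number of leaf-labellings of a given binary tree shape with $j+1$ leaves is $(2j)!/j!$ divided by the number of shape-automorphisms — but in fact these labelled trees have no nontrivial automorphisms once we track which leaf is which, so I'd phrase everything in terms of labelled trees and note $\bP\{T_j = \text{(a given shape)}\} = p_j \cdot (\text{number of labellings})$ with $p_j = 1/((2j-1)!!\,2^j)$ as derived in the introduction. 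Matching the combinatorics: the labelled reductions from $\tt$ to $\ss$ are indexed by (an embedding of $\ss$ into $\tt$) $\times$ (a bijection assigning the $n$ non-image leaves of $\tt$ to the removal times $\{1,\dots,n\}$), giving a factor $N(\ss,\tt)\cdot n!$ — but one must quotient by the $(m+1)!$ labellings of $\ss$ on one side and multiply by the corresponding labellings of $\tt$ on the other, which after simplification produces exactly $n!\cdot \frac{1}{(2m+1)(2m+3)\cdots(2(m+n)-1)}\cdot\frac{1}{2^n}\cdot N(\ss,\tt)$. The main obstacle is bookkeeping the factorials and automorphism/labelling factors without error, together with the verification that exactly $n!$ peeling orders are compatible with each fixed embedding — intuitively true because the $n$ extra leaves, together with their contracted siblings, form a forest of ``cherries-being-built'' that can be un-built in any order, but this should be stated and checked (for instance by induction on $n$, peeling one leaf and reducing to the $n-1$ case with $\tt$ replaced by its reduction and $N(\ss,\cdot)$ tracked via the identity $N(\ss,\tt)=\sum N(\ss,\tt')$ over one-step backward reductions $\tt'$ of $\tt$).
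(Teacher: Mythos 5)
Your proposal is correct, but it is not the route the paper's proof actually takes: the paper conditions on $\{T_m=\ss\}$ and argues \emph{forward}, decomposing $T_{m+n}$ into the trees of ``clonal descendants'' of the $2m+1$ vertices of $\ss$, observing that the sizes of these pieces evolve as a P\'olya urn (add two balls of the chosen colour), that conditionally the pieces are independent uniform binary trees, and that the ancestral vertices sit at uniform leaves of their pieces; summing the resulting multinomial expression over decompositions gives the formula. Your backward/labelled-counting argument is instead essentially the ``alternative method'' that the paper itself sketches in the Remark immediately following the theorem: pass to the uniformly leaf-labelled chain $\tilde T_n$ from the Introduction, note that a labelling of $\tt$ determines the whole reduction deterministically, and count that exactly $N(\ss,\tt)\,(m+1)!\,n!$ of the $(m+n+1)!$ labellings of $\tt$ induce a copy of $\ss$ on the labels $[m+1]$; combined with $\bP\{T_j=\cdot\}=1/C_j$ this yields the stated $p(\ss,\tt)$, and your factorial bookkeeping does check out. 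Two small cautions. First, you should not take the unlabelled backward dynamics (``delete a uniform leaf and its sibling'') as given, since in the paper that fact is \emph{deduced} from this very theorem in Section~4; you correctly sidestep the circularity by grounding everything in the labelled chain, whose uniformity is established independently in the Introduction, and whose backward step is the deterministic removal of the highest label. Second, the one fact you use repeatedly but only gesture at --- that iterating ``remove the top-labelled leaf and suppress the resulting degree-two vertex'' $n$ times lands exactly on the subtree of $\tt$ spanned by the leaves labelled $[m+1]$, i.e.\ on the image of an embedding --- deserves the short induction you propose; once it is in place, the ``$n!$ peeling orders per embedding'' issue you worry about disappears, because in the labelled picture the peeling order is just the assignment of $\{m+2,\dots,m+n+1\}$ to the non-image leaves. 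The trade-off between the two proofs: the paper's clonal argument reveals the P\'olya-urn structure that motivates the whole boundary analysis, while yours is more elementary and makes the factor $n!\,N(\ss,\tt)$ combinatorially transparent.
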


\begin{proof}
We condition on the event $\{T_m = \ss\}$  
and say that a vertex of $T_{m+n}$ is a {\em clonal descendant} 
of a vertex $v \in \ss$ if it is $v$ itself, a clone of $v$, 
a clone-of-a-clone of $v$, etc. 
We can then decompose $T_{m+n}$ 
into connected pieces according to their
clonal descent from the vertices of $\ss$
-- see Figure~\ref{fig:clonal_descent}
for a schematic representation of such a decomposition.

\begin{figure}
	\centering
		\includegraphics[width=9cm]{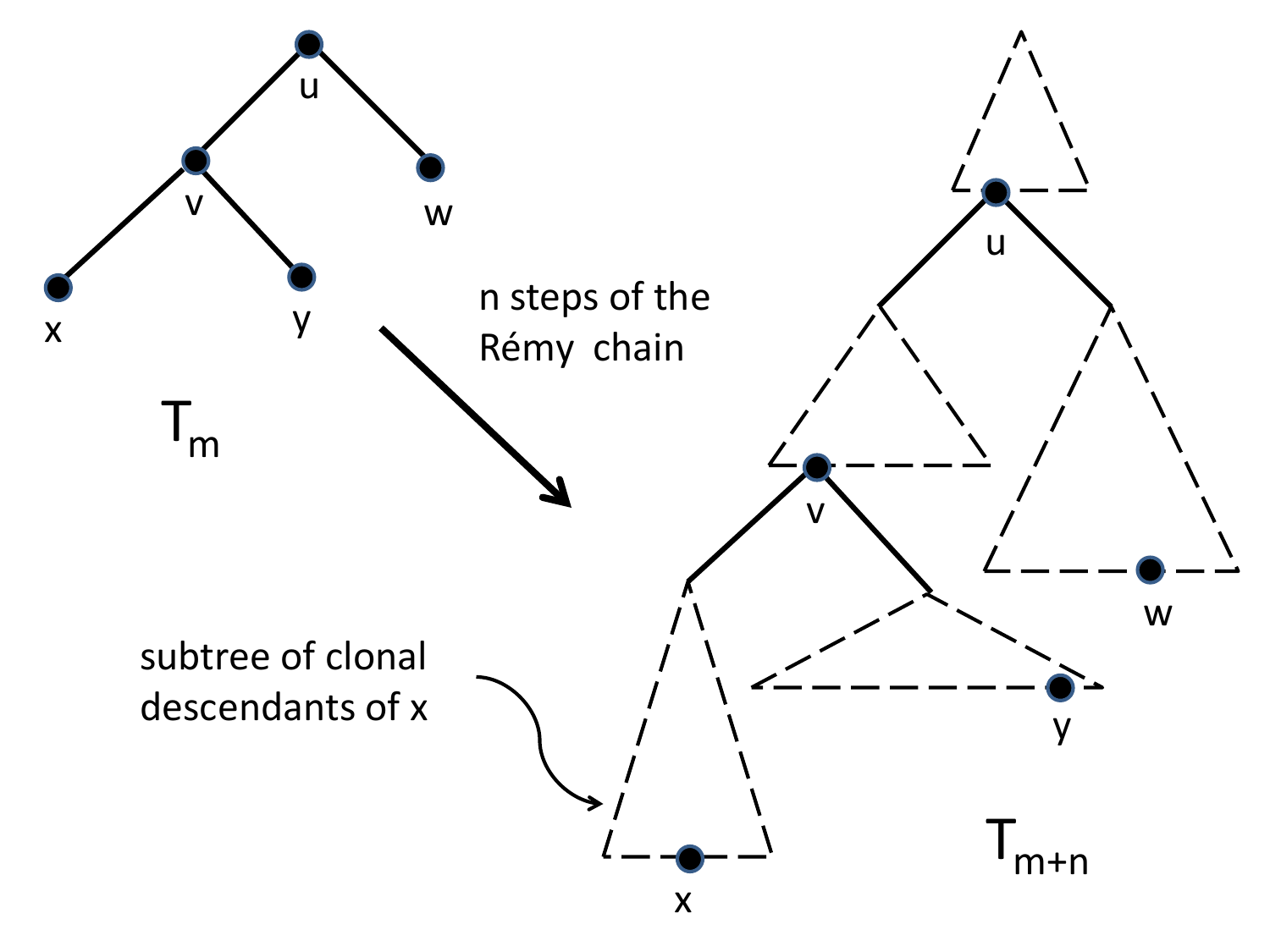}
	\caption{\small Decomposition of the tree $T_{m+n}$ via clonal descent 
	from the vertices of the tree $T_m = \ss$.}
	\label{fig:clonal_descent}
\end{figure}

It follows from the definition of the R\'emy chain that the
numbers of clonal descendants of the $2m+1$ vertices
of $\ss$ are the result of $n$ steps in a {\em P\'olya urn} that starts
with $2m+1$ balls of different colors
and at each stage a ball is chosen uniformly
at random and replaced along with {\bf two} balls of the same color.

Because the R\'emy chain generates uniformly distributed binary trees,
it further follows that conditional on the various
numbers of clonal descendants, 
the respective binary trees of clonal descendants 
are independent and uniformly distributed.

Moreover, a straightforward induction shows that,
conditional on the trees of clonal descendants, the ancestral vertices
from $\ss$ are located at independently and uniformly chosen leaves
of their respective trees of clonal descendants.

Therefore, if we label the vertices of $\ss$ with 
$1, \ldots, 2m+1$, then the conditional probability given 
$\{T_m = \ss\}$ that the operation of a further $n$ steps
of R\'emy's algorithm results in a binary tree
$\tt$ enhanced with a particular clonal descent decomposition
in which $2 n_j+1$ vertices are clonal descendants of vertex $j$
of $\ss$ for $1 \le j \le 2m+1$ is 
\[
\begin{split}
& \frac{n!}{n_1! \cdots n_{2m+1}!}
\frac{\prod_{j=1}^{2m+1} [1 \times 3 \times \cdots \times (2n_j - 1)]}
{(2m+1) \times (2m+3) \times \cdots \times (2(m+n)-1)} \\
& \qquad 
\times \prod_{j=1}^{2m+1} \frac{1}{C_{n_j}}
\prod_{j=1}^{2m+1}\frac{1}{n_j+1} \\
& \quad =
n! 
\frac{1}{(2m+1) \times (2m+3) \times \cdots \times (2(m+n) - 1)}
\frac{1}{2^{n}},\\
\end{split}
\]
and the result is immediate.
\end{proof}

\begin{remark}
An alternative method for proving Theorem~\ref{T:transition_prob}
is to use arguments similar to those used in the Introduction to
show that R\'emy's algorithm does indeed generate uniform random binary
trees.  More precisely, 
let $\tilde \ss$ be a tree with $m+1$ leaves labeled by $[m+1]$
and let
$\tilde \tt$ be a tree with $(m+n)+1$ leaves labeled by $[(m+n)+1]$.
Recalling the construction of the enhanced chain 
$\tilde T_1, \tilde T_2, \ldots$,
the conditional probability of the event $\{\tilde T_{m+n} = \tilde \tt\}$
given the event $\{\tilde T_m = \tilde \ss\}$ is either zero if the
leaf--labeled binary tree induced by the leaves of $\tilde \tt$ labeled
with $[m+1]$ is not $\tilde \ss$ or 
\[
\frac{1}{(2m+1) \times (2m+3) \times \cdots \times (2(m+n) - 1)}
\frac{1}{2^{n}}
\]
if it is,
because, as in the Introduction, the leaf--labeling dictates the order in 
which vertices must be cloned, as well as the associated
choices of left-right re-attachments.  If $\ss$ and $\tt$ are unlabeled
binary trees with $m+1$ and $(m+n)+1$ leaves, respectively, then for any
labeling of the leaves of $\ss$ 
to give a leaf--labeled binary tree $\tilde \ss$, the
number of ways of labeling $\tt$ to give a leaf--labeled binary tree 
$\tilde \tt$ such that the leaf--labeled binary tree induced by the leaves
labeled with $[m+1]$ is just $n! N(\ss,\tt)$, 
because any admissible labeling of $\tt$ corresponds to an embedding of
$\ss$ into $\tt$ composed with a labeling (with  $\{m+1, \ldots, (m+n) + 1\}$) 
of those leaves of $\tt$ that are
not in the image
of $\ss$.
\end{remark}

\begin{corollary}\label{cor37}
Suppose that $\ss$ and $\tt$ are two binary trees with, respectively, 
$2m+1$ and $2(m+n)+1$ vertices.
Then, the corresponding Doob--Martin kernel is
\[
K(\ss,\tt) = C_{m+n} \, p(\ss,\tt)=
2^m 
\frac{1 \times 3 \times \cdots \times (2m-1)}
{(n+1) \times (n+2) \times \cdots \times (m+n+1)}
N(\ss,\tt).
\]
\end{corollary}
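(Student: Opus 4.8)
The plan is to reduce the statement to Theorem~\ref{T:transition_prob} by means of the observation that $T_{m+n}$ is uniformly distributed over the $C_{m+n}$ binary trees with $2(m+n)+1$ vertices. First I would note that, since $T_1 = \aleph$ almost surely and $T_{m+n}$ is uniform, one has $p(\aleph,\tt) = \bP\{T_{m+n} = \tt\} = 1/C_{m+n}$ for every binary tree $\tt$ with $2(m+n)+1$ vertices. Combined with the identity $K(\ss,\tt) = p(\ss,\tt)/p(\aleph,\tt)$ from the displayed computation at the start of Section~\ref{DMker}, this yields at once the first equality $K(\ss,\tt) = C_{m+n}\,p(\ss,\tt)$.

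For the closed form I would then substitute the expression for $p(\ss,\tt)$ from Theorem~\ref{T:transition_prob} and simplify using the elementary identity $1 \times 3 \times \cdots \times (2k-1) = (2k)!/(2^k k!)$. Applying it to both $1\times 3\times\cdots\times(2(m+n)-1)$ and $1\times 3\times\cdots\times(2m-1)$ rewrites the denominator product $(2m+1)\times(2m+3)\times\cdots\times(2(m+n)-1)$ that appears in $p(\ss,\tt)$ as the quotient of these two; expanding also $C_{m+n} = (2(m+n))!\,/\,\bigl((m+n+1)!\,(m+n)!\bigr)$ and cancelling the common factors $(2(m+n))!$, $2^n$ and $(m+n)!$ leaves $K(\ss,\tt) = n!\,(2m)!\,N(\ss,\tt)\,/\,\bigl((m+n+1)!\,m!\bigr)$. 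Finally, writing $n!/(m+n+1)! = 1/[(n+1)(n+2)\cdots(m+n+1)]$ (a product of $m+1$ consecutive integers) and $(2m)!/m! = 2^m\,(1\times 3\times\cdots\times(2m-1))$ produces the asserted formula.

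The argument is purely computational, so I do not expect a genuine obstacle; the only point requiring care is the interpretation of the finite products when $n = 0$ or $m = 0$ (an empty product being $1$), and to guard against an off-by-one error in the product ranges I would check the sanity cases $K(\ss,\ss) = C_m$ and $K(\aleph,\tt) = C_{m+n}\,p(\aleph,\tt) = 1$ against the final expression.
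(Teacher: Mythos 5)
Your proposal is correct and follows exactly the paper's route: the paper also derives $K(\ss,\tt)=C_{m+n}\,p(\ss,\tt)$ from $p(\aleph,\tt)=1/C_{m+n}$ (this is the displayed computation at the start of Section~\ref{DMker}) and then substitutes the formula of Theorem~\ref{T:transition_prob}, declaring the rest immediate. Your algebraic simplification checks out, so the only difference is that you spell out the arithmetic the paper leaves to the reader.
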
 

\begin{proof}
This is immediate from the definition of the Doob--Martin kernel and 
Theorem~\ref{T:transition_prob}.
\end{proof}

\begin{notation}
Given $m \in \bN$ and  a binary tree $\tt$ with 
$2(m+n)+1$ vertices for some $n \in \bN$, 
define $S_m^\tt$ to be the random binary
tree embedded in $\tt$ that is obtained by picking $m+1$ leaves of $\tt$
uniformly at random without replacement -- see Figure~\ref{fig:embeddings}.
\end{notation}

\begin{figure}[ht]
		\includegraphics[height=5cm]{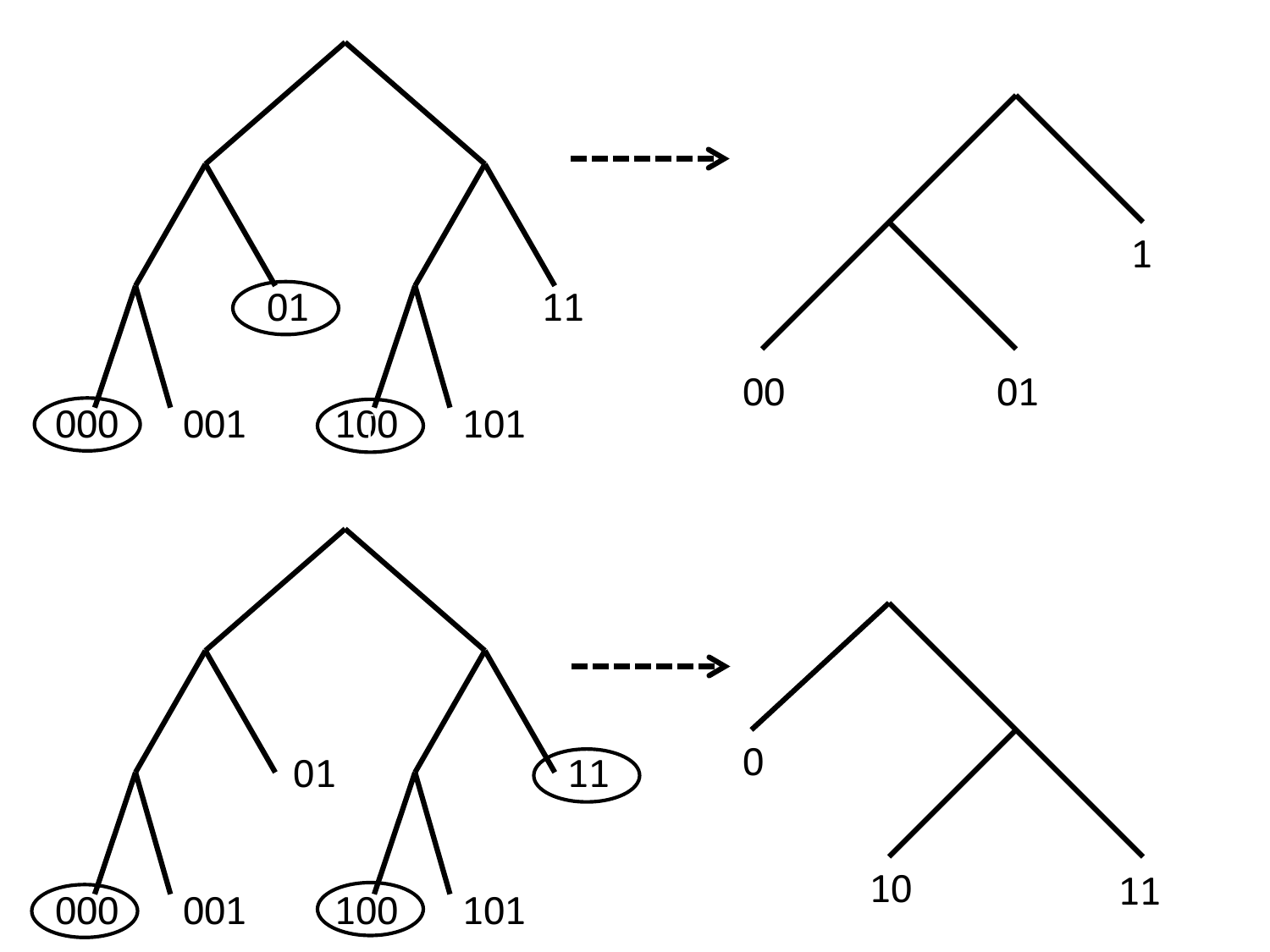}
	\caption{\small Two possible realizations of the random tree
	$S_m^\tt$ when $m=2$, $n=3$ and $\tt$ is the binary tree
with $11 = 2(2+3)+1$ vertices
depicted twice on the left side along with an indication
of its representation as a subset of $\{0,1\}^*$. 
Picking the $m+1=3$ leaves $000$, $01$ and $100$ out of the 
$(m+n)+1 = 6$ leaves of $\tt$ as shown in the
top row results in a realization of
$S_m^\tt$ that has leaves $00$, $01$ and $1$ in its representation
as a subset of $\{0,1\}^*$, 
while picking the leaves $001$, $100$ and $11$ of  $\tt$ 
as shown in the bottom row results 
in a realization of $S_m^\tt$ that has leaves $0$, $10$ and $11$
in its representation as a subset of $\{0,1\}^*$.}
	\label{fig:embeddings}
\end{figure}

\begin{corollary}
\label{C:D-M_sampling}
Suppose that $\ss$ and $\tt$ are two binary trees with, respectively, 
$2m+1$ and $2(m+n)+1$ vertices.  Then,  
\[
\bP\{S_m^\tt = \ss\}
= \frac{1}{C_m} K(\ss,\tt).  
\]
\end{corollary}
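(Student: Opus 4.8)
The plan is to evaluate $\bP\{S_m^\tt = \ss\}$ by a direct enumeration and then match the answer against the formula for the Doob--Martin kernel in Corollary~\ref{cor37}. Picking $m+1$ leaves of $\tt$ uniformly at random without replacement is the same as picking an $(m+1)$-element subset of the $(m+n)+1$ leaves of $\tt$ uniformly at random, and the spanned subtree $S_m^\tt$ depends only on this subset; hence
\[
\bP\{S_m^\tt = \ss\}
=
\binom{(m+n)+1}{m+1}^{-1}
\#\bigl\{\text{$(m+1)$-subsets of the leaves of $\tt$ whose spanned subtree is $\ss$}\bigr\}.
\]

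The heart of the matter --- and the step I expect to require the most care --- is to identify the cardinality in this display with $N(\ss,\tt)$. By the Remark following the definition of an embedding, an embedding of $\ss$ into $\tt$ is determined by the images of the leaves of $\ss$; moreover an embedding preserves the left-to-right order of the leaves, because if a leaf $a$ lies to the left of a leaf $b$ in $\ss$ then $a \wedge b <_L a$ and $a \wedge b <_R b$, the embedding sends $a \wedge b$ to the most recent common ancestor of the two images, and it respects $<_L$ and $<_R$. Consequently an embedding is already determined by the \emph{unordered} set of its leaf images, so ``take the set of leaf images'' is injective on embeddings. Its image is precisely the collection of $(m+1)$-subsets of the leaves of $\tt$ whose spanned rooted planar binary subtree is (isomorphic to) $\ss$: from such a subset, the unique order-preserving bijection onto the leaves of $\ss$ extends through most recent common ancestors to an embedding, and conversely every embedding obviously produces such a subset. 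This gives the bijection, so $\bP\{S_m^\tt = \ss\} = \binom{(m+n)+1}{m+1}^{-1} N(\ss,\tt)$.

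It then remains to do the bookkeeping showing that the last quantity equals $\frac{1}{C_m} K(\ss,\tt)$. Substituting the formula of Corollary~\ref{cor37} and using $(n+1)(n+2)\cdots(m+n+1) = (m+n+1)!/n!$ together with $\binom{(m+n)+1}{m+1} = (m+n+1)!/((m+1)!\,n!)$, the claimed identity collapses to
\[
C_m\,(m+1)! = \bigl(1 \times 3 \times \cdots \times (2m-1)\bigr)\,2^m,
\]
which holds because both sides equal $(2m)!/m!$ (on the left via $C_m = (2m)!/((m+1)!\,m!)$, on the right since $\bigl(1 \times 3 \times \cdots \times (2m-1)\bigr) 2^m = (2m)!/m!$). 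This finishes the proof. A more probabilistic alternative would be to observe that, by the definition of $K$, $\frac{1}{C_m} K(\ss,\tt) = \bP\{T_m = \ss \mid T_{m+n} = \tt\}$ and to argue directly that conditioning the R\'emy chain on $T_{m+n} = \tt$ makes $T_m$ distributed as the subtree of $\tt$ spanned by $m+1$ uniformly chosen leaves; but the enumeration above is the most economical route given Corollary~\ref{cor37}.
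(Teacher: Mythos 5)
Your proposal is correct and follows essentially the same route as the paper: the paper's proof also begins from $\bP\{S_m^\tt = \ss\} = N(\ss,\tt)/\binom{m+n+1}{m+1}$ and then reduces to the formula of Corollary~\ref{cor37} by the same factorial manipulations. The only difference is that you spell out the bijection between embeddings and $(m+1)$-subsets of leaves spanning a copy of $\ss$, which the paper treats as immediate from the definition of $S_m^\tt$ and the remark that an embedding is determined by its leaf images.
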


\begin{proof}
It suffices to observe that
\[
\begin{split}
\bP\{S_m^\tt = \ss\}
& = \frac{N(\ss,\tt)}{\binom{m+n+1}{m+1}} \\
& = \frac{(m+1)!}{(n+1)\times (n+2) \times \cdots \times (m+n+1)} N(\ss,\tt) \\
&= \frac{(m+1)!}{ 2^m \times 1\times3\times\cdots\times(2m-1)} K(\ss,\tt) \\
& = \frac{1}{C_m} K(\ss,\tt). \qedhere
\end{split} 
\]
\end{proof}

The following result is immediate
from Corollary~\ref{C:D-M_sampling}.
It shows that convergence of a sequence of binary trees
to a point in the Doob--Martin boundary is equivalent to the convergence
in distribution of the random embedded subtrees resulting from sampling
a finite number of leaves uniformly at random.  Thus convergence
in our setting is, as remarked in the Introduction,
 analogous to the notion of convergence of dense
graph sequences as explored in the theory of graph limits, where a
sequence of larger and larger graphs converges to a limit if the
random subgraphs defined by restriction to a finite number of vertices
sampled uniformly at random converge in distribution (see, for example,
\cite[Chapter 13]{MR3012035}).  The latter notion of convergence is
metrized by a very natural metric called the {\em cut metric}
that is, a priori, unrelated to sampling from a graph
and it would be interesting to know if there is an analogous
object that metrizes the notion of convergence of binary trees
in our setting.

\begin{corollary}\label{C:D-M_sampling1}
A sequence $(\tt_k)_{k \in \bN}$ of binary trees with the number
of leaves of $\tt_k$ going to infinity as $k \to \infty$ converges in
the Doob--Martin compactification if and only if 
for each $m \in \bN$ the sequence of random
binary trees $(S_m^{\tt_k})_{k \in \bN}$ converges in distribution.  
Moreover, two such convergent sequences of binary trees
$(\tt_k')_{k \in \bN}$ and $(\tt_k'')_{k \in \bN}$
converge to the same point in the Doob--Martin boundary if and only if 
for all $m \in \bN$ the
limiting distribution of $S_m^{\tt_k'}$ as $k \to \infty$ coincides
with the limiting distribution of $S_m^{\tt_k''}$ as $k \to \infty$.
\end{corollary}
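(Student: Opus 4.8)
The plan is to unwind the definition of the Doob--Martin compactification and substitute the identity of Corollary~\ref{C:D-M_sampling}. By construction of $\bar E$, the sequence $(\tt_k)_{k\in\bN}$ converges in the Doob--Martin compactification if and only if $(K(\ss,\tt_k))_{k\in\bN}$ converges in $\bR$ for every binary tree $\ss$, and in that case the limit is the function $\ss\mapsto \lim_{k\to\infty}K(\ss,\tt_k)$ regarded as a point of $\bR_+^E$. Fix a binary tree $\ss$ with $2m+1$ vertices. Since the number of leaves of $\tt_k$ tends to infinity, for all sufficiently large $k$ the tree $\tt_k$ has at least $m+2$ leaves, so $S_m^{\tt_k}$ is defined and Corollary~\ref{C:D-M_sampling} gives $K(\ss,\tt_k)=C_m\,\bP\{S_m^{\tt_k}=\ss\}$; as convergence of a sequence depends only on its tail and $C_m>0$, it follows that $(K(\ss,\tt_k))_{k}$ converges if and only if $(\bP\{S_m^{\tt_k}=\ss\})_{k}$ converges.

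For the first assertion I would then observe that letting $\ss$ range over all binary trees amounts to letting $m$ range over $\bN$ and, for each $m$, letting $\ss$ range over the \emph{finite} set of binary trees with $2m+1$ vertices. For fixed $m$ the random tree $S_m^{\tt_k}$ takes values in this finite set, so $(S_m^{\tt_k})_{k}$ converges in distribution precisely when $\bP\{S_m^{\tt_k}=\ss\}$ converges for each such $\ss$ (the limiting masses then automatically sum to one and define a probability distribution). Chaining these equivalences: $(\tt_k)_{k}$ converges in the Doob--Martin compactification if and only if $(K(\ss,\tt_k))_k$ converges for every $\ss$, if and only if $(\bP\{S_m^{\tt_k}=\ss\})_k$ converges for every $m$ and every $\ss$ with $2m+1$ vertices, if and only if $(S_m^{\tt_k})_{k}$ converges in distribution for every $m\in\bN$.

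For the ``moreover'' part I would use that two convergent sequences $(\tt_k')_k$ and $(\tt_k'')_k$ have the same limit in $\bar E$ if and only if the two limit functions in $\bR_+^E$ coincide, i.e.\ $\lim_k K(\ss,\tt_k')=\lim_k K(\ss,\tt_k'')$ for every binary tree $\ss$; via $K(\ss,\tt_k)=C_m\,\bP\{S_m^{\tt_k}=\ss\}$ (valid for large $k$) and $C_m>0$ this is equivalent to $\lim_k\bP\{S_m^{\tt_k'}=\ss\}=\lim_k\bP\{S_m^{\tt_k''}=\ss\}$ for every $m$ and every $\ss$ with $2m+1$ vertices, which says exactly that the limiting distributions of $S_m^{\tt_k'}$ and $S_m^{\tt_k''}$ agree for all $m\in\bN$. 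I do not expect a real obstacle here: the entire content is the dictionary furnished by Corollary~\ref{C:D-M_sampling} together with the definition of $\bar E$, the only points needing a little care being that Corollary~\ref{C:D-M_sampling} applies to the pair $(\ss,\tt_k)$ only once $\tt_k$ has enough leaves (guaranteed by the hypothesis that the number of leaves tends to infinity) and that, for finite-valued random variables, convergence in distribution is literally pointwise convergence of the probability mass functions.
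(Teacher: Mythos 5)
Your proposal is correct and is exactly the argument the paper intends: the paper states the corollary as ``immediate from Corollary~\ref{C:D-M_sampling}'', and your write-up simply fills in the routine translation between pointwise convergence of the Doob--Martin kernels $K(\ss,\cdot)$ and convergence of the probability mass functions of the finitely valued random trees $S_m^{\tt_k}$. The points you flag for care (that $S_m^{\tt_k}$ is defined only once $\tt_k$ has enough leaves, and that convergence in distribution for finite-valued variables is pointwise convergence of the mass functions) are handled correctly.
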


\begin{example}
\label{E:complete_binary_converges}
Recall that
$\tt_k^c$ is the complete binary tree with $2^k$ leaves. It follows from 
Corollary \ref{cor37},
 the last equality in Example~\ref{E:complete_binary_embedding} 
and Corollary~\ref{C:D-M_sampling1}
that the sequence $(\tt_k^c)_{k \in \bN}$
converges in the Doob--Martin topology with
\[
\begin{split}
\lim_{k \to \infty} K(\ss,\tt_k^c)
& =
2^m 
(1 \times 3 \times \cdots \times (2m-1))
\frac{1}{(m+1)!} \\
& \quad \times \kappa^{\otimes (m+1)} \{(x_1, \ldots, x_{m+1}) 
: T(x_1, \ldots, x_{m+1}) = \ss \} \\
& =
C_m \kappa^{\otimes (m+1)} \{(x_1, \ldots, x_{m+1}) 
: T(x_1, \ldots, x_{m+1}) = \ss \}\\
\end{split}
\]
for a binary tree $\ss$ with $m+1$ leaves.
Equivalently,
\[
\lim_{k \to \infty}
\bP\{T_m = \ss \, | \, T_{2^k-1} = \tt_k^c\}
=
\kappa^{\otimes (m+1)} \{(x_1, \ldots, x_{m+1}) 
: T(x_1, \ldots, x_{m+1}) = \ss \}.
\]

The latter probability can be evaluated quite explicitly.  
Let $X_1, \ldots, X_{m+1}$ be independent, identically distributed 
$\{0,1\}^\infty$-valued random variables with common distribution $\kappa$.
We label the balls of $\{0,1\}^\infty$ that have diameter $2^{-k}$ with the
elements of $\{0,1\}^k$ by declaring that $B_{u_1 \ldots u_k}$ is the unique
ball containing all points of the form $u_1 \ldots u_k u_{k+1} u_{k+2} \ldots$
for arbitrary $u_{k+1}, u_{k+2}, \ldots \in \{0,1\}$.  There is a random
integer $R$ such that $\{X_1, \ldots X_{m+1}\} \subset B_{u_1 \ldots u_R}$
for some $u_1, \ldots, u_R \in \{0,1\}$, but 
$\{X_1, \ldots X_{m+1}\} \not \subset B_{u_1 \ldots u_R 0}$
and
$\{X_1, \ldots X_{m+1}\} \not \subset B_{u_1 \ldots u_R 1}$.
Observe that
\[
\begin{split}
& \bP\{\#\{i : X_i \in B_{u_1 \ldots u_R 0}\} = h, \, \#\{j : X_j \in B_{u_1 \ldots u_R 1}\} 
= m+1 - h \; | \; R, u_1, \ldots, u_R\} \\
& \quad =
\frac{
\binom{m+1}{h} \left(\frac{1}{2}\right)^{m+1}
}
{
1 - 2 \left(\frac{1}{2}\right)^{m+1}
} \\
\end{split}
\]
for
$1 \le h \le m$.
Moreover, given that $\#\{i : X_i \in B_{u_1 \ldots u_R 0}\} = h$
and  $\#\{j : X_j \in B_{u_1 \ldots u_R 1}\} = m+1 - h$, the set of locations
of the $X_i$ that fall in $B_{u_1 \ldots u_R 0}$ and the set of locations
of the $X_i$ that fall in $B_{u_1 \ldots u_R 1}$ are independent, with the
former random set being conditionally distributed as $h$ i.i.d. draws from
the probability measure $\kappa$ restricted to $B_{u_1 \ldots u_R 0}$ 
and renormalized to be a probability measure, and with the latter
random set being conditionally distributed as $m+1-h$ i.i.d. draws from
the probability measure $\kappa$ restricted to $B_{u_1 \ldots u_R 1}$ 
and renormalized to be a probability measure.  
Label the internal vertices of $\ss$ with $1,\ldots,m$. Let
$\alpha_\ell$ (resp. $\beta_\ell$) be the number of leaves of $\ss$ that
are below and to the left (resp. below and to the right) of vertex $\ell$
and write $\gamma_\ell := \alpha_\ell + \beta_\ell$ for the
the total number of leaves below the vertex labeled $\ell$.
It follows that
\[
\begin{split}
& \kappa^{\otimes (m+1)} \{(x_1, \ldots, x_{m+1}) : 
T(x_1, \ldots, x_{m+1}) = \ss\} \\
& \quad =
\prod_{\ell=1}^m 
\frac{(\alpha_\ell + \beta_\ell)!}
{\alpha_\ell! \beta_\ell!}
\left(\frac{1}{2}\right)^{\alpha_\ell+\beta_\ell}
\left(1 - \left(\frac{1}{2}\right)^{\alpha_\ell+\beta_\ell-1}\right)^{-1} \\
& \quad =
(m+1)! \frac{1}{2^m}
\prod_{\ell=1}^m 
\left(2^{\alpha_\ell+\beta_\ell-1} - 1\right)^{-1} \\
& \quad = 
(m+1)! \frac{1}{2^m}
\prod_{\ell=1}^m 
\left(2^{\gamma_\ell-1} - 1\right)^{-1},\\
\end{split}
\]
where the second equality results from a telescope product along the binary tree  $\ss$.
In particular, the function that maps $\ss$ to
\[
C_m (m+1)! 
\frac{1}{2^m}
\prod_{\ell=1}^m 
\left(2^{\gamma_\ell-1} - 1\right)^{-1}
=
1 \times 3 \times \cdots \times (2m-1) \times
\prod_{\ell=1}^m 
\left(2^{\gamma_\ell-1} - 1\right)^{-1}
\]
is harmonic for the R\'emy chain.
We can write this function more compactly as
\[
\mathfrak h : \ss
\mapsto
1 \times 3 \times \cdots \times (2m-1)
\times
\prod_v 
\left(2^{\# \ss(v)-1} - 1\right)^{-1},
\]
where the product is over the interior vertices of $\ss$,
and $\# \ss(v)$ is the number of leaves below the interior vertex $v$.

It is instructive to check
directly that this function is indeed harmonic.
Suppose that in one step of the chain
starting from the tree $\ss$ with $2m+1$ vertices
the vertex $v$ of $\ss$ is chosen to be cloned.
This produces a tree $\tt$ with $2m+1$ old vertices that we
can identify with the vertices of $\ss$ and two new vertices
that we will call $x$ and $y$, with $x$ an interior vertex and
$y$ a leaf.  If $u \ne v$ is an interior vertex of $\ss$ that is on the 
path from the root to 
$v$ (that is, $u$ is an ancestor of $v$), 
then $\# \tt(u) = \# \ss(u) + 1$.  For any other interior vertex $u$
of $\ss$ we have  $\# \tt(u) = \# \ss(u)$. 
Lastly, $\# \tt(x) = \# \ss(v) + 1$, where we put $\# \ss(v) = 1$ if
$v$ is a leaf of $\ss$.  Therefore, if $v$ is an interior vertex of
$\ss$, then
\[
\begin{split}
& 1 \times 3 \times \cdots \times (2m+1)
\times
\prod_w 
\left(2^{\# \tt(w)-1} - 1\right)^{-1} \\
& \quad =
1 \times 3 \times \cdots \times (2m+1) 
\times
\left[\prod_{u < v} 
\left(2^{\# \ss(u)} - 1\right)^{-1}\right] \\
& \qquad \times
\left(2^{\# \ss(v)-1} - 1\right)^{-1}
\times
\left(2^{\# \ss(v)} - 1\right)^{-1} \\
& \qquad \times
\left[\prod_{u \not \le v} 
\left(2^{\# \ss(u) - 1} - 1\right)^{-1}\right], \\
\end{split}
\] 
whereas if $v$ is a leaf, then
\[
\begin{split}
& 1 \times 3 \times \cdots \times (2m+1)
\times
\prod_w 
\left(2^{\# \tt(w)-1} - 1\right)^{-1} \\
& \quad =
1 \times 3 \times \cdots \times (2m+1)
\times
\left[\prod_{u < v} 
\left(2^{\# \ss(u)} - 1\right)^{-1}\right]
\times
\left[\prod_{u \not \le v} 
\left(2^{\# \ss(u) - 1} - 1\right)^{-1}\right]. \\
\end{split}
\]

Writing $I$ for the set of internal
vertices of $\ss$ and $L$ for the leaves, we therefore see that harmonicity of 
$\mathfrak h$ is equivalent to
\[
\begin{split}
& \sum_{v \in I} \prod_{u < v} 
\frac{
2^{\# \ss(u)-1} - 1
}
{
2^{\# \ss(u)} - 1
}
\frac{1}{2^{\# \ss(v)} - 1} \\
& \qquad +
\sum_{v \in L} \prod_{u < v}
\frac{
2^{\# \ss(u)-1} - 1
}
{
2^{\# \ss(u)} - 1
} \\
& \quad = 
\sum_{v \in \ss}
\prod_{u < v} 
\frac{
2^{\# \ss(u)-1} - 1
}
{
2^{\# \ss(u)} - 1
}
\frac{1}{2^{\# \ss(v)} - 1} \\
& \quad =
1. \\
\end{split}
\]

This, however, is clear by induction.  It is certainly true if $\ss$ 
is the trivial binary tree with a single vertex or the binary tree
$\aleph$ with three vertices.  Assuming for some binary tree $\ss$ with
$m+1$ leaves that it is true for all binary trees with fewer leaves, 
we see from a consideration of the the 
left and right subtrees below the root of $\ss$
that the sum in question is
\[
\frac{1}{2^{m+1} - 1}
+
\frac{
2^{m} - 1
}
{
2^{m+1} - 1
}
[1 + 1]
=
1,
\]
as required.

The one-step transition probability for the corresponding
Doob $h$-transformed chain is, for binary trees $\ss$ and
$\tt$ with $2m+1$ and $2m+3$ vertices,
\[
\begin{split}
& \left[
1 \times 3 \times \cdots \times (2m-1) \times
\prod_u
\left(2^{\# \ss(u)-1} - 1\right)^{-1}
\right]^{-1} \\
& \qquad \times
\frac{1}{2(2m+1)} N(\ss,\tt) \\
& \qquad \times
1 \times 3 \times \cdots \times (2m+1) \times
\prod_v
\left(2^{\# \tt(v) - 1} - 1\right)^{-1} \\
& \quad =
\frac{1}{2} 
\frac{
\prod_u 
\left(2^{\# \ss(u) - 1} - 1\right)
}
{
\prod_{v}
\left(2^{\# \tt(v) - 1} - 1\right)
}
N(\ss,\tt), \\
\end{split}
\]
where the products in $u$ run over the interior vertices of $\ss$
and the products in $v$ run over the interior vertices of $\tt$.

It is apparent from the above that one step of the $h$-transformed chain 
starting from the state $\ss$ can be described as follows.

\begin{itemize}
\item
Pick a vertex $v$ of $\ss$ with probability
\[
\prod_{u < v} 
\frac{
2^{\# \ss(u)-1} - 1
}
{
2^{\# \ss(u)} - 1
}
\frac{1}{2^{\# \ss(v)} - 1}.
\] 
\item
Cut off the subtree rooted at $v$ and set it aside.
\item
Attach a copy of the tree $\aleph$ with $3$ vertices to the end of the edge
that previously led to $v$.
\item
Re-attach the subtree rooted at $v$ uniformly at random
to one of the two leaves in the copy of $\aleph$.
\end{itemize}

\end{example}

\section{Infinite R\'emy bridges}
\label{S:infinite_bridges}

Given a binary tree $\tt$ with $2 m(\tt) + 1$ vertices, write
$ T_1^\tt (=\aleph),T_2^\tt, \ldots, T_{m(\tt)}^{\tt}$ for the bridge process
obtained by conditioning $T_1, \ldots, T_{m(\tt)}$
on the event $\{T_{m(\tt)} = \tt\}$.

Recall from Section~\ref{S:Martin_general}
that a sequence
$(\tt_k)_{k \in \bN}$ with $m(\tt_k) \to \infty$ converges
in the  Doob--Martin topology if and only if for each $\ell \in \bN$
the random $\ell$-tuple $(T_1^{\tt_k}, \ldots, T_{\ell}^{\tt_k})$
converges in distribution.  Moreover, the various limits define a set of
consistent distributions and hence the distribution of
a Markov chain $(T_n^\infty)_{n \in \bN}$
with $T_1^\infty = \aleph$.

Note that if $\ss, \tt$ are binary trees with $2m+1$ and
$2m+3$ vertices, respectively, then, using Theorem \ref{T:transition_prob},
\[
\begin{split}
\bP\{T_m^{\tt_k} = \ss \, | \, T_{m+1}^{\tt_k} = \tt\}
& =
\frac{
\bP\{T_m^{\tt_k} = \ss, \; T_{m+1}^{\tt_k} = \tt\}
}
{
\bP\{T_{m+1}^{\tt_k} = \tt\}
} \\
& =
\frac{
\bP\{T_{m+1}^{\tt_k} = \tt \, | \, T_m^{\tt_k} = \ss\}
 \bP\{T_m^{\tt_k} = \ss\}
}
{
\bP\{T_{m+1}^{\tt_k} = \tt\}
} \\
& =
\frac{p(\aleph, \ss) p(\ss, \tt) p(\tt, \tt_k)}
{p(\aleph, \tt) p(\tt, \tt_k)} \\
& = 
C_m^{-1} 
\frac{1}{2m+1} \frac{1}{2} N(\ss,\tt) / C_{m+1}^{-1} \\
&= \frac{(m+1)! m!}{(2m)!}
\frac{1}{2m+1} \frac{1}{2} N(\ss,\tt) \frac{(2(m+1))!}{(m+2)! (m+1)!} \\
& = \frac{1}{m+2} N(\ss,\tt).\\
\end{split}
\]

Therefore, any finite bridge $(T_n^\tt)_{n=1}^{m(\tt)}$ 
and any limit of finite bridges $(T_n^\infty)_{n \in \bN}$
evolves one step backward in time 
as follows:
\begin{itemize}
\item
Pick a leaf uniformly at random.
\item
Delete the chosen leaf and its sibling (which may or may not be a leaf).
\item
If the sibling is not a leaf, then close up the resulting
gap  by attaching the subtree
below the sibling to the parent of the chosen leaf and the sibling.
\end{itemize}

As we have already explained in the Introduction, 
understanding the Doob--Martin compactification
is equivalent to understanding all Markov chains 
with initial state $\aleph$ that have these backward transition dynamics.
We call any such a process an {\em infinite R\'emy bridge}.

\begin{example}
\label{E:coin_tossing_bridge}
Suppose that $(\tt_k)_{k \in \bN}$ is the binary tree depicted in
Figure~\ref{fig:infinite_bridge_example_part1}.
\begin{figure}
	\centering
		\includegraphics[height=5cm]{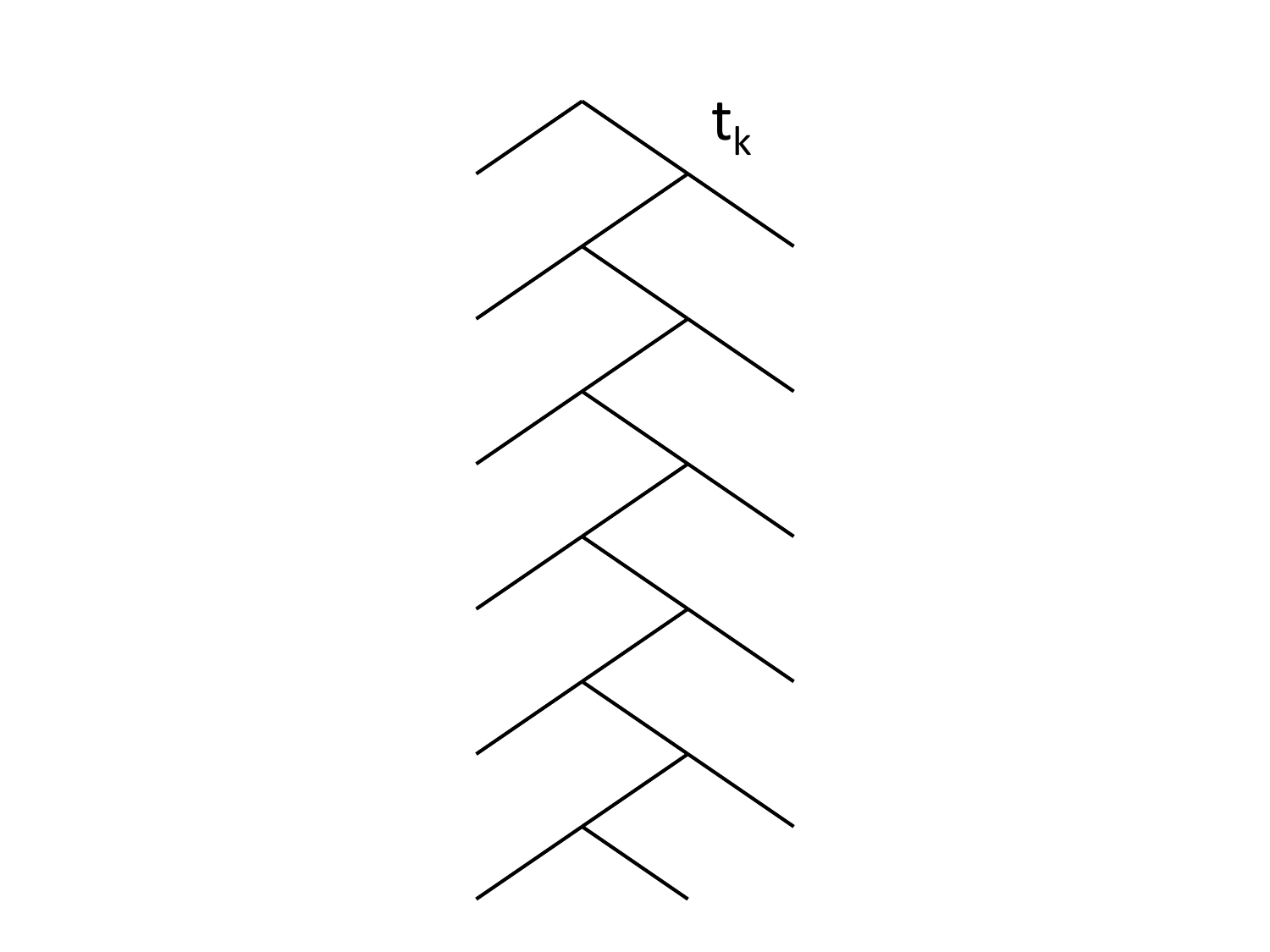}
	\caption{\small The binary tree $\tt_k$ of Example~\ref{E:coin_tossing_bridge}.
	This tree has $2k+1$ vertices and
	consists of a single spine with 
	  leaves hanging off to the left and right alternately.} 
	\label{fig:infinite_bridge_example_part1}
\end{figure}
It is not hard to see that the sequence $(\tt_k)_{k \in \bN}$
converges in the  Doob--Martin topology and that the value at time $n$
of the corresponding limit bridge $(T_n^\infty)_{n \in \bN}$
can be represented as the
subset of $\{0,1\}^\star$ that consists of the vertices 
$\emptyset, \epsilon_1, \epsilon_1 \epsilon_2,  \ldots, 
\epsilon_1 \epsilon_2 \cdots \epsilon_n$, where 
$\epsilon_1, \ldots, \epsilon_n$ are independent $\{0,1\}$-valued random
variables with $\bP\{\epsilon_i=0\} = \bP\{\epsilon_i=1\}=\frac{1}{2}$
for $1 \le i \le n$, plus the vertices 
$\bar \epsilon_1, \epsilon_1 \bar \epsilon_2, \ldots,
\epsilon_1 \epsilon_2 \cdots \bar \epsilon_n$, 
where $\bar \epsilon_i = 1 - \epsilon_i$ for $1 \le i \le n$.
In other words, $T_n^\infty$ consists of
a ``spine'' that moves to the left
or right depending on the successive tosses of a fair coin plus
the minimal set of extra
leaves that are required to yield a valid binary tree 
-- see Figure~\ref{fig:infinite_bridge_example_part2}.
We stress that $T_{n+1}^\infty$ is not obtained by simply appending
an extra independent fair coin toss $\epsilon_{n+1}$ to the end
of the sequence $\epsilon_1, \ldots, \epsilon_n$.  Rather, if we write
$\epsilon_1^n, \ldots, \epsilon_n^n$ for the coin tosses that correspond
to $T_n^\infty$ and $\epsilon_1^{n+1}, \ldots, \epsilon_{n+1}^{n+1}$
for the coin tosses that correspond to $T_{n+1}^\infty$, then 
$\epsilon_1^{n+1}, \ldots, \epsilon_{n+1}^{n+1}$ is obtained 
from $\epsilon_1^n, \ldots, \epsilon_n^n$ by inserting an
additional independent toss uniformly at random into one of
the $n+1$ ``slots'' associated with the latter sequence --
before the first toss, between two successive tosses, or
after the last toss.
\begin{figure}
	\centering
		\includegraphics[height=6cm]{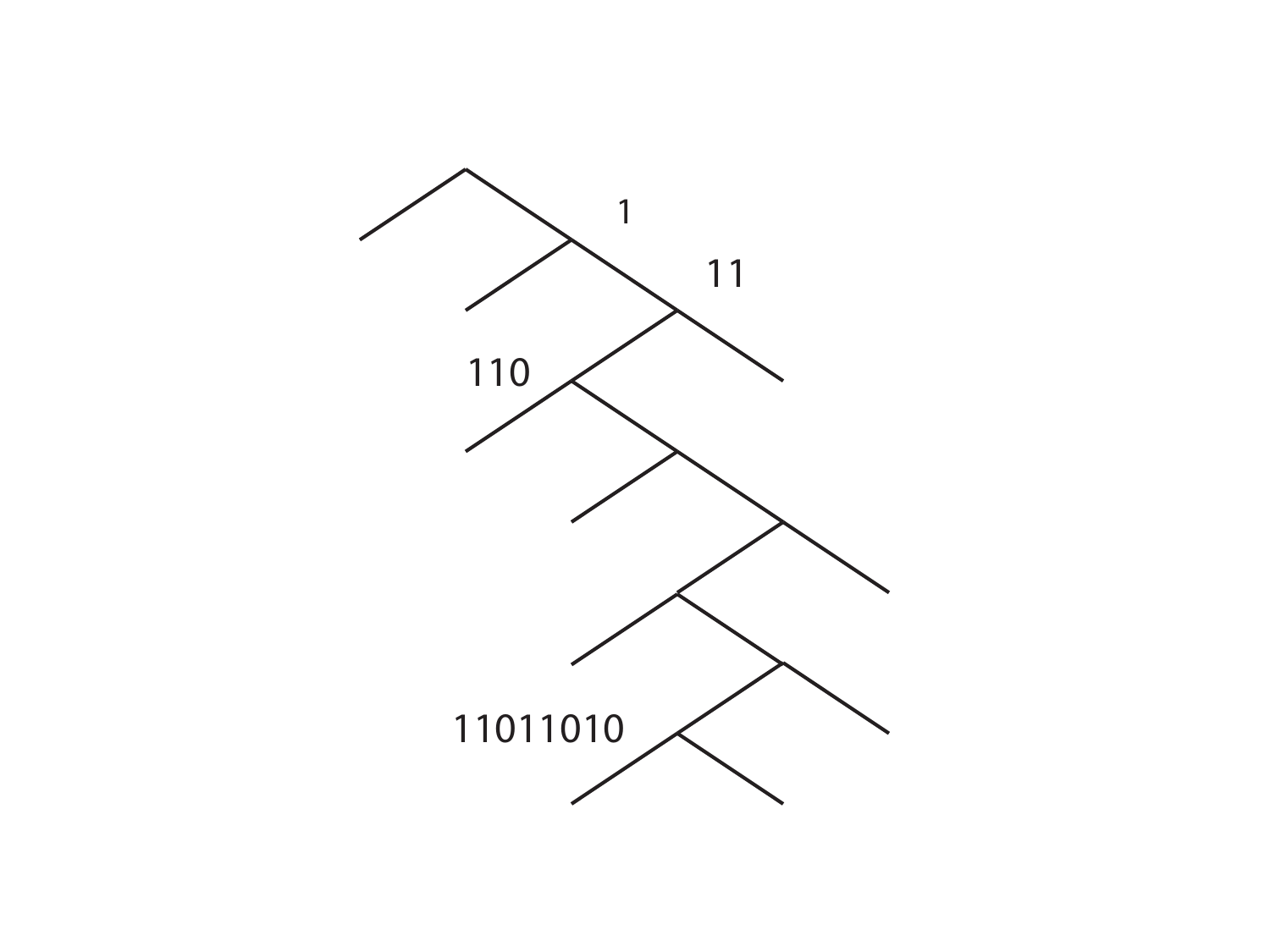}
	\vspace{-1cm}\caption{\small A realization at time $n=9$ of the 
	infinite R\'emy bridge arising from the sequence of trees depicted in 
	Figure~\ref{fig:infinite_bridge_example_part1}.
	The random tree consists of leaves hanging 
	off a single spine that moves to the
	left or right according to successive tosses of a fair coin.} 
	\label{fig:infinite_bridge_example_part2}
\end{figure}
\end{example}

\begin{example}\label{E:MBfulltree}
We know from Example~\ref{E:complete_binary_converges} that if $\tt_k^c$
is the complete binary tree with $2^k$ leaves, then the sequence
$(\tt_k^c)_{k \in \bN}$ converges in the Doob--Martin topology.  
Moreover, it is clear that the value
at time $n$ of the corresponding infinite R\'emy bridge
$(T_n^\infty)_{n \in \bN}$ is obtained by picking $n+1$ points 
from $\{0,1\}^\infty$ independently according
to the probability measure $\kappa$ and taking the 
finite binary tree they induce 
-- see Figure~\ref{fig:complete_binary_tree_example}.
\begin{figure}[ht]
	\centering
		\includegraphics[height=5cm]{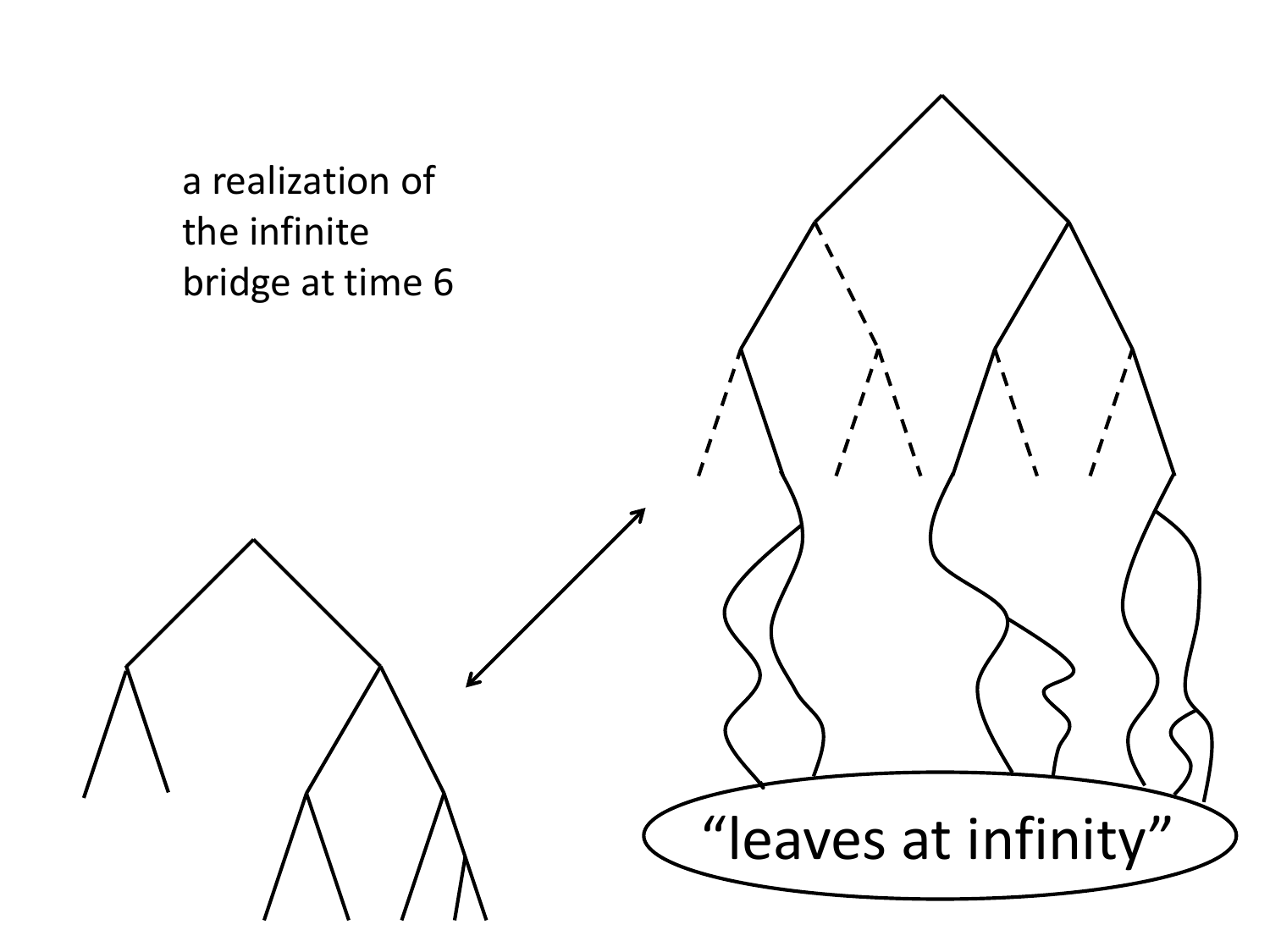}
	\caption{\small If $\tt_k^c$ is the complete binary tree
	 with $2^k$ leaves, then $\lim_k \tt_k^c$ exists in the 
	  Doob--Martin topology. The random value at time $n$ of the
	resulting infinite R\'emy bridge 
	can be built by choosing $n+1$ points independently and
	uniformly at
	random from the leaves at infinity of the infinite complete binary tree
	and constructing the tree they induce.}
	\label{fig:complete_binary_tree_example}
\end{figure}
\end{example}

\section{Labeled infinite R\'emy bridges and didendritic systems}
\label{S:N_tree}

Consider a binary tree $T''$ with $n+2$ leaves.  Label the leaves
of $T''$ with $[n+2]$ uniformly at random (that is, all $(n+2)!$
labelings are equally likely).  Now apply the following deterministic
procedure to produce a binary tree $T'$ with $n+1$ leaves and
a labeling of those leaves
by $[n+1]$. 
\begin{itemize}
\item
Delete the leaf labeled $n+2$, along with its sibling
(which may or may not be a leaf).
\item 
If the sibling of the leaf labeled $n+2$ is also a leaf, then 
assign the sibling's label to the common parent (which is now a leaf).
\item
If the sibling of the leaf labeled $n+2$ is not a leaf, then attach
the subtree below the sibling to the common parent with its 
leaf labels unchanged and leave all other leaf labels unchanged.
\end{itemize}
Clearly, the distribution of $T'$ is that arising from
one step starting from $T''$
of the backward R\'emy dynamics (that is, the common backward
dynamics of all infinite R\'emy bridges). Moreover, the labeling of $T'$ 
by $[n+1]$ is uniformly distributed over the $(n+1)!$ possible labelings.

Now suppose that $(T_n^\infty)_{n \in \bN}$ is an infinite R\'emy bridge.
For some $N$, let $S_N$ be a random binary tree with the same
distribution as $T_N^\infty$.  Label $S_N$ 
uniformly at random with $[N+1]$ to produce a labeled binary tree
$\tilde S_N$. Apply the above deterministic procedure successively for 
$n=N-1, \ldots, 1$ to produce labeled binary trees 
$\tilde S_{N-1}, \ldots, \tilde S_1$, where $\tilde S_n$ has $n+1$
leaves labeled by $[n+1]$ for $1 \le n \le N-1$.  
Write $S_n$ for the underlying binary tree obtained by removing
the labels of $\tilde S_n$.
It follows from the observation above that the sequence $(S_1, \ldots, S_N)$
has the same joint distribution as $(T_1^\infty, \ldots, T_N^\infty)$.
Note that the distribution of the sequence
$(\tilde S_1, \ldots,  \tilde S_N)$ is uniquely
determined by the distribution of $T_N^\infty$ and hence, 
{\em a fortiori}, by the joint distribution of $(T_n^\infty)_{n \in \bN}$.
Note also that if we perform this construction for two different values
of $N$, say $N' < N''$, to produce, with the obvious notation, sequences
$(\tilde S_1', \ldots, \tilde S_{N'}')$
and
$(\tilde S_1'', \ldots, \tilde S_{N''}'')$,
then
$(\tilde S_1', \ldots, \tilde S_{N'}')$
has the same distribution as
$(\tilde S_1'', \ldots, \tilde S_{N'}'')$.

By Kolmogorov's extension theorem we may therefore
suppose that there is a Markov process
$(\tilde T_n^\infty)_{n \in \bN}$
such that for each $n \in \bN$ 
the random element $\tilde T_n^\infty$ is a leaf-labeled binary
tree with $n+1$ leaves labeled by $[n+1]$
and the following hold.
\begin{itemize}
\item
The binary tree obtained by removing the labels of $\tilde T_n^\infty$
is $T_n^\infty$.
\item
For every $n \in \bN$, the conditional distribution
of $\tilde T_n^\infty$ given $T_n^\infty$ is uniform over the
$(n+1)!$ possible labelings of $T_n^\infty$.
\item
In going backward from time $n+1$ to time $n$,
$\tilde T_{n+1}^\infty$ is transformed into $\tilde T_n^\infty$
according to the deterministic procedure described above.
\end{itemize}

The distribution of
$(\tilde T_n)_{n \in \bN}$
is uniquely specified by the distribution
of $(T_n^\infty)_{n \in \bN}$
and the above requirements. 
Because of this distributional uniqueness, we refer to 
$(\tilde T_n^\infty)_{n \in \bN}$ as {\bf the}
{\em labeled version} of $(T_n^\infty)_{n \in \bN}$
and $(T_n^\infty)_{n \in \bN}$ as the {\em unlabeled
version} of $(\tilde T_n^\infty)_{n \in \bN}$.
In a similar vein, we will talk about objects
such as the ``leaf of $T_n^\infty$ labeled with $i \in [n+1]$.''

We have just described the construction of
a labeled infinite R\'emy bridge 
from an unlabeled one.
Using the Doob--Martin boundary, we can view this
construction from a slightly different point of view as follows.

\begin{remark}
\label{R:Qy}
Recall that for binary trees $\ss$ and $\tt$ with $n+1$ and
$n+2$ leaves, respectively, the backwards transition probability 
$q(\ss, \tt) := \bP\{T_n^\infty = \ss \, | \, T_{n+1}^\infty = \tt\}$
is the same for all infinite R\'emy bridges $(T_n^\infty)_{n \in \bN}$.  For
$y$ in the Doob--Martin boundary of the R\'emy chain, write
$\bQ^y$ for the distribution of the infinite R\'emy bridge associated
with $y$; that is, $\bQ^y$ is the distribution of the
Doob $h$-transform of the R\'emy chain for
the harmonic function $K(\cdot, y)$.  Thus $\bQ^y$ assigns mass
$q(\tt_1, \tt_2) q(\tt_2, \tt_3) \cdots q(\tt_{n-1}, \tt_n) K(\tt_n, y)$
to the set of paths that begin with 
the sequence of states $\tt_1, \tt_2, \ldots, \tt_{n-1}, \tt_n$.
The distribution of an arbitrary infinite R\'emy bridge
is of the form $\int \bQ^y \, \mu(dy)$ for some probability measure $\mu$
concentrated on the Doob--Martin boundary of the R\'emy chain, and this
representation is unique if $\mu$ is required
to be concentrated on the minimal boundary.
For labeled binary trees $\tilde \ss$ and $\tilde \tt$ with $n+1$ and
$n+2$ leaves, respectively, write  
$\tilde q(\tilde \ss, \tilde \tt) 
:= \bP\{\tilde T_n^\infty = \tilde \ss \, | \, \tilde T_{n+1}^\infty = \tilde \tt\}$
for the backwards transition probability
common to all labeled infinite R\'emy bridges $(\tilde T_n^\infty)_{n \in \bN}$. 
The construction of a labeled infinite R\'emy bridge $(\tilde T_n^\infty)_{n \in \bN}$
corresponding to an infinite R\'emy bridge $(T_n^\infty)_{n \in \bN}$ can be described
as follows: if $(T_n^\infty)_{n \in \bN}$ has distribution $\int \bQ^y \, \mu(dy)$,
then $(\tilde T_n^\infty)_{n \in \bN}$ has distribution $\int \tilde \bQ^y \, \mu(dy)$,
where $\tilde \bQ^y$ is the probability measure that assigns mass 
$\tilde q(\tilde \tt_1, \tilde \tt_2) \tilde q(\tilde \tt_2, \tilde \tt_3) \cdots \tilde q(\tilde \tt_{n-1}, \tilde \tt_n) 
\frac{1}{(n+1)!} K(\tt_n, y)$
to the set of paths that begin with the sequence of states 
$\tilde \tt_1, \tilde \tt_2, \ldots, \tilde \tt_{n-1}, \tilde \tt_n$ 
and $\tt_n$ is the binary tree obtained by removing the labels from $\tilde \tt_n$.
\end{remark}

It will be convenient for later use to be more concrete
about the structure of the extra randomness introduced
by labeling.  

\begin{definition}
\label{D:choice_variables}
Define a sequence of random variables $(L_n)_{n \in \bN}$ by setting
$L_n := k \in [n+1]$ if the leaf labeled $n+1$ in $\tilde T_n^\infty$
(and hence the one removed to form  $T_{n-1}^\infty$ from $T_n^\infty$
and $\tilde T_{n-1}^\infty$ from $\tilde T_n^\infty$) is the $k^{\mathrm{th}}$
smallest of the leaves of $T_n^\infty$ in the lexicographic order on $\{0,1\}^*$
(recall that $v_1 \ldots v_s$ is smaller than $w_1 \ldots w_t$ in the lexicographic
order if there is some $r < s \wedge t$ such that $v_q = w_q$ for $q \le r$,
$v_{r+1} = 0$, and $w_{r+1} = 1$).
For any $n \in \bN$ it is clear that 
$L_1, L_2, \ldots, L_n, (T_n^\infty, T_{n+1}^\infty, \ldots)$
are independent and that $L_n$ is uniformly distributed on $[n+1]$.
\end{definition}

By construction, $(\tilde T_1^\infty, \ldots, \tilde T_n^\infty)$
is a measurable function of $(L_1, \ldots, L_n)$ and $T_n^\infty$.
It might be expected from this observation that the entire labeled infinite R\'emy bridge
$(\tilde T_n^\infty)_{n \in \bN}$ (and hence, {\em a fortiori}, the infinite
R\'emy bridge $(T_n^\infty)_{n \in \bN}$) may be constructed from $(L_n)_{n \in \bN}$
and ``boundary conditions'' in the tail $\sigma$-field 
$\bigcap_{m \in \bN} \sigma\{T_n^\infty: n \ge m\}$.  The next result
shows that this is indeed the case.\footnote{The proof of Lemma \ref{L:tail_relation}  is incorrect but the result itself is valid:
see Section~\ref{S:corrigenda} for details.  The result is not used later in the paper.
We have kept the result and its incorrect proof so that, up to the note at the end of Section 
\ref{S:intro},  three footnotes in Section \ref{S:N_tree} and an update in the references, the content of Sections \ref{S:intro}--\ref{S:leftright} is identical to the published version \cite{MR3601650}.}
%
\begin{lemma}
\label{L:tail_relation}
For an infinite R\'emy bridge $(T_n^\infty)_{n \in \bN}$,
its labeled version $(\tilde T_n^\infty)_{n \in \bN}$, and the selection
sequence $(L_n)_{n \in \bN}$,
\[
\begin{split}
\sigma\{\tilde T_n^\infty: n \in \bN\}
& =
\bigcap_{m \in \bN} \sigma\{\tilde T_n^\infty: n \ge m\} \\
& =
\sigma\{L_p : p \in \bN\} \vee \bigcap_{m \in \bN} \sigma\{T_n^\infty: n \ge m\},
\quad \text{$\bP$-a.s.} \\
\end{split}
\]
\end{lemma}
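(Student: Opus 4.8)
The statement asserts the equality of three $\sigma$-fields up to $\bP$-null sets. I would prove it via a chain of inclusions, organized around the middle object $\bigcap_{m} \sigma\{\tilde T_n^\infty : n \ge m\}$, the tail $\sigma$-field of the labeled bridge.

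\emph{Step 1: the top $\sigma$-field contains the tail.} The inclusion $\bigcap_m \sigma\{\tilde T_n^\infty : n \ge m\} \subseteq \sigma\{\tilde T_n^\infty : n \in \bN\}$ is trivial. The reverse inclusion, $\sigma\{\tilde T_n^\infty : n \in \bN\} \subseteq \bigcap_m \sigma\{\tilde T_n^\infty : n \ge m\}$ up to null sets, is the substantive claim that the full $\sigma$-field of the labeled bridge coincides with its tail. This is where the key structural feature of the labeling scheme enters: the labeled bridge is \emph{projective}, meaning that for each fixed $\ell$, the leaf-labeled subtree of $\tilde T_n^\infty$ spanned by the leaves carrying labels in $[\ell+1]$ stabilizes and equals $\tilde T_{\ell}^\infty$ for all $n \ge \ell$ (this is exactly the consistency property emphasized in the Introduction and built into the backward deterministic procedure of Section~\ref{S:N_tree}: going backward, the leaf labeled $n+2$ is deleted and no label in $[n+1]$ ever changes). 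Hence $\tilde T_\ell^\infty$ is a deterministic (measurable) function of $\tilde T_n^\infty$ for every $n \ge \ell$, so $\tilde T_\ell^\infty$ is measurable with respect to $\sigma\{\tilde T_n^\infty : n \ge m\}$ for every $m$, and therefore with respect to the tail. Taking the union over $\ell$ and then the generated $\sigma$-field gives $\sigma\{\tilde T_n^\infty : n \in \bN\} \subseteq \bigcap_m \sigma\{\tilde T_n^\infty : n \ge m\}$, and this inclusion is in fact exact, not merely up to null sets. So the first equality holds.

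\emph{Step 2: the tail of the labeled bridge is contained in $\sigma\{L_p\} \vee (\text{tail of unlabeled bridge})$.} By construction (Definition~\ref{D:choice_variables} and the remark following it), for each $n$ the tuple $(\tilde T_1^\infty, \ldots, \tilde T_n^\infty)$ is a measurable function of $(L_1, \ldots, L_n)$ and $T_n^\infty$; equivalently $\tilde T_n^\infty$ is a measurable function of $(L_1, \ldots, L_{n-1}, T_n^\infty)$. Using Step 1, $\sigma\{\tilde T_n^\infty : n \in \bN\} = \bigvee_n \sigma\{\tilde T_n^\infty\} \subseteq \bigvee_n \sigma\{L_1, \ldots, L_{n-1}, T_n^\infty\} \subseteq \sigma\{L_p : p \in \bN\} \vee \sigma\{T_n^\infty : n \in \bN\}$. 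To sharpen this to the \emph{tail} of the unlabeled bridge rather than its full $\sigma$-field, I would invoke the reverse martingale / Hewitt--Savage argument sketched in the Introduction: the unlabeled bridge itself has the property that its full $\sigma$-field equals its tail $\sigma$-field up to $\bP$-null sets, because the backward dynamics are exchangeable (removing a uniformly random leaf), so permuting the first $n$ of the driving i.i.d.\ uniforms leaves the chain from time $n$ onward unchanged and Hewitt--Savage applies. Actually, since the unlabeled bridge is a deterministic functional of the labeled bridge, it suffices instead to show directly that $\sigma\{T_n^\infty : n \in \bN\}$ is contained, up to null sets, in the tail $\bigcap_m \sigma\{T_n^\infty : n \ge m\}$; this follows because $T_n^\infty$ is a function of $\tilde T_n^\infty$, which by Step 1 is tail-measurable for the labeled bridge, and the unlabeled tail contains the image of the labeled tail under forgetting labels. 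Combining, $\sigma\{\tilde T_n^\infty : n \in \bN\} \subseteq \sigma\{L_p : p \in \bN\} \vee \bigcap_m \sigma\{T_n^\infty : n \ge m\}$ modulo null sets.

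\emph{Step 3: the reverse inclusion.} I need $\sigma\{L_p : p \in \bN\} \vee \bigcap_m \sigma\{T_n^\infty : n \ge m\} \subseteq \sigma\{\tilde T_n^\infty : n \in \bN\}$. The unlabeled tail is contained in the unlabeled full $\sigma$-field, which is contained in the labeled full $\sigma$-field since each $T_n^\infty$ is a function of $\tilde T_n^\infty$. For the selection variables: $L_n$ records the lexicographic rank among the leaves of $T_n^\infty$ of the leaf carrying label $n+1$ in $\tilde T_n^\infty$; both the leaf set of $T_n^\infty$ with its lexicographic order and the label-$(n+1)$ leaf are determined by $\tilde T_n^\infty$, so $L_n$ is $\sigma\{\tilde T_n^\infty\}$-measurable, hence $\sigma\{L_p : p \in \bN\} \subseteq \sigma\{\tilde T_n^\infty : n \in \bN\}$. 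Taking the join gives the claim.

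\emph{Main obstacle.} The one genuinely delicate point is Step 2's upgrade from ``full $\sigma$-field of the unlabeled bridge'' to ``tail of the unlabeled bridge.'' The inclusion $\sigma\{\tilde T_n^\infty\} \subseteq \sigma\{L_1,\dots,L_{n-1}\} \vee \sigma\{T_n^\infty : n \in \bN\}$ is immediate from the construction, but showing one may replace $\sigma\{T_n^\infty : n \in \bN\}$ by the tail requires the exchangeability-plus-Hewitt--Savage observation that the unlabeled bridge's own full $\sigma$-field equals its tail up to null sets. This in turn rests on representing the bridge as a functional of i.i.d.\ uniform selection variables whose finite permutations do not affect the future of the chain --- precisely the argument outlined in the Introduction for the original R\'emy chain, applied now to an arbitrary infinite R\'emy bridge. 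I would state this as a short separate lemma (or cite the corresponding passage) and then the three inclusions above assemble routinely. Everything else is bookkeeping with measurability of deterministic functionals.
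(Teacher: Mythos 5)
Your Step 1 and Step 3 are correct and agree with the paper's opening observations: projectivity of the labeling gives $\sigma\{\tilde T_n^\infty : n \in \bN\} = \bigcap_m \sigma\{\tilde T_n^\infty : n \ge m\}$ exactly, and since each $L_n$ and each $T_n^\infty$ is a measurable function of $\tilde T_n^\infty$, the third $\sigma$-field is contained in the first two. The gap is in Step 2, and it is not a small one. Your key device there is the assertion that the full $\sigma$-field $\sigma\{T_n^\infty : n \in \bN\}$ of the \emph{unlabeled} bridge coincides with its tail $\bigcap_m \sigma\{T_n^\infty : n \ge m\}$ up to null sets. This is false. For the extremal bridge of Example~\ref{E:coin_tossing_bridge} the tail is trivial, yet $T_2^\infty$ takes each of the two binary trees with three leaves with probability $\tfrac12$, so $\sigma\{T_2^\infty\}$ is already nontrivial and the full $\sigma$-field strictly exceeds the tail. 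The Hewitt--Savage argument from the Introduction identifies the tail with the $\sigma$-field of the limit object; it does not collapse the full $\sigma$-field onto the tail. Your fallback sentence (``the unlabeled tail contains the image of the labeled tail under forgetting labels'') is not a meaningful operation on $\sigma$-fields and does not rescue the step.

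The genuine difficulty, which your proposal never isolates, is an exchange of a countable intersection with a join. From ``$\tilde T_\ell^\infty$ is a function of $(L_1,\dots,L_n,T_n^\infty)$ for $n \ge \ell$'' one gets, for \emph{every} $m$, the inclusion
\[
\sigma\{\tilde T_n^\infty : n \in \bN\} \subseteq \sigma\{L_p : p \in \bN\} \vee \sigma\{T_n^\infty : n \ge m\},
\]
hence the left side lies in $\bigcap_m(\cF \vee \cG_m)$ with $\cF := \sigma\{L_p : p \in \bN\}$ and $\cG_m := \sigma\{T_n^\infty : n \ge m\}$. What must then be proved is $\bigcap_m(\cF \vee \cG_m) = \cF \vee \cG_\infty$ $\bP$-a.s., and this identity is not automatic: only ``$\supseteq$'' is trivial, and the reverse inclusion can fail in general. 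Note also that $\cF$ is \emph{not} independent of $\cG_1$ here (only $(L_1,\dots,L_n)$ is independent of $\cG_n$ for each fixed $n$), so no naive independence argument applies. The paper settles this by verifying the hypotheses of von Weizs\"acker's theorem \cite{MR699981}, using that the $\cG_m$ are countably generated and that $\cF$ is independent of $\cG_\infty$, so that the conditional probability kernel given $\cF$ restricted to $\cG_\infty$ agrees with $\bP$ there. Without an argument of this type your chain of inclusions does not close.
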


\begin{proof}
Because $\tilde T_m^\infty$ is a measurable function of $\tilde T_n^\infty$ for $m < n$,
the first two $\sigma$-fields are clearly equal, and since $(L_n)_{n \in \bN}$ and
$(T_n^\infty)_{n \in \bN}$ are measurable functions of $(\tilde T_n^\infty)_{n \in \bN}$,
it is also clear that these two $\sigma$-fields
 both contain the third $\sigma$-field.  Now $\tilde T_m^\infty$
is a measurable function of $T_n^\infty$ and $L_1, \ldots, L_n$ for $m < n$, and so to complete
the proof it suffices to show that
\[
\begin{split}
& \sigma\{L_p : p \in \bN\} \vee \bigcap_{m \in \bN} \sigma\{T_n^\infty: n \ge m\} \\
& \quad \supseteq 
\bigcap_{m \in \bN} \left(\sigma\{L_p : p \in \bN\} \vee  \sigma\{T_n^\infty: n \ge m\}\right), 
\quad \text{$\bP$-a.s.} \\
\end{split}
\]

That is, setting $\cF := \sigma\{L_p : p \in \bN\}$, $\cG_m := \sigma\{T_n^\infty: n \ge m\}$,
and $\cG_\infty := \bigcap_{m \in \bN} \cG_m$, it is enough to establish that
\[
\bigcap_{m \in \bN} \left(\cF \vee \cG_m\right) = \cF \vee \cG_\infty.
\]
Let $(\omega, A) \mapsto \bP^\cF(\omega, A)$, $\omega \in \Omega$, $A \in \cG_1$,
 be the conditional probability kernel on $\cG_1$ given $\cF$.
Because each $\sigma$-field $\cG_m$ is countably generated, the desired equality will follow from 
the implication (d) $\Longrightarrow$ (a) of the main theorem
of \cite{MR699981} if we can show that there is a countably generated $\sigma$-field
$\cH$ such that $\cG_\infty = \cH \mod \bP^\cF(\omega, \cdot)$ for $\bP$-a.e. $\omega \in \Omega$.
Because $\cF$ and $\cG_\infty$ are independent, $\bP^\cF(\omega, \cdot)$ restricted to $\cG_\infty$
coincides with $\bP$ restricted to $\cG_\infty$ for $\bP$-a.e. $\omega \in \Omega$. Now $(\omega, A) \mapsto \bP(A)$, 
$\omega \in \Omega$, $A \in \cG_1$, is certainly the conditional probability kernel on $\cG_1$ given the
trivial $\sigma$-field $\{\emptyset, \Omega\}$.  Moreover, since 
\[
\bigcap_{m \in \bN} \left(\{\emptyset, \Omega\} \vee \cG_m\right) = \{\emptyset, \Omega\} \vee \cG_\infty
\]
obviously holds, it follows from the implication (a) $\Longrightarrow$ (d) of the main theorem
of \cite{MR699981} that such a countably generated $\cH$ does indeed exist.  Alternatively, because
$\cG_1$ is countably generated,  $L^1(\Omega,\cG_1, \bP)$ contains a countable dense subset $C$.
Let $D$ be a collection of random variables that contains a version of $\bE[\xi \, | \, \cG_\infty]$
for each $\xi \in C$.  It is clear that $D$ is dense in $L^1(\Omega, \cG_\infty, \bP)$ and it suffices
to take $\cH$ to be the $\sigma$-field generated by $D$.
\end{proof}

We now want to use the labeled infinite R\'emy bridge to build an infinite 
binary-tree-like structure for which the set $\bN$ plays the
role of the leaves.   Interior vertices in this infinite 
binary-tree-like structure have a left child and a right child,
but we show that if we forget about this ordering, then there is
an $\bR$-tree such that, loosely speaking, the tree-like structure is that of the
tree spanned by countably many points picked
independently according to a certain probability
measure on the $\bR$-tree.  The $\bR$-tree is nonrandom when
the infinite R\'emy bridge is extremal, but even in that case further randomization may
be necessary to reconstitute the left versus right ordering of children.
 
 As will become clear in Section~\ref{S:leftright}, 
 Example~\ref{E:coin_tossing_bridge} 
 gives rise to a situation in which additional randomization 
 is required to ``distinguish left from right'' 
 after the countable collection of points has been sampled
 in order to fully reconstitute
the binary tree-like structure; 
that is, it is not possible to impose a planar structure on
the $\bR$-tree so that the left versus right ordering
of children in the subtree spanned by the
sampled points is inherited from the planar structure on the $\bR$-tree.
The essential point here is that there is no Borel subset
$A$ of the unit interval with Lebesgue measure $\frac{1}{2}$
such that if $U$ is a uniform random variable on
the unit interval the random variables $U$ and $\ind_A(U)$
are independent.

However, no such additional randomization is necessary in 
Example~\ref{E:MBfulltree} and the associated $\bR$-tree can be augmented
with a planar structure that induces the desired one on the subtree spanned
by the sampled points.

\begin{definition}
If $i,j \in [n+1]$ are the labels of two leaves of $T_n^\infty$ that
are represented by the words
$u_1 \ldots u_k$ and $v_1 \ldots v_\ell$ in $\{0,1\}^\star$, then
set $[i,j]_n := u_1 \ldots u_m = v_1 \ldots v_m$, where 
$m:= \max\{h : u_h = v_h\}$.   That is, $[i,j]_n$ is the 
{\em most recent common ancestor} in $T_n^\infty$ of the leaves labeled
$i$ and $j$.  Note that $[i,i]_n$ is just the leaf labeled $i$ and every
internal vertex of $T_n^\infty$ is of the form $[i,j]_n$ for at least one
pair $(i,j)$ with $i \ne j$.
\end{definition}

\begin{definition}
Define an equivalence relation $\equiv$ on the Cartesian product
$\bN \times \bN$ by declaring that
$(i',j') \equiv (i'',j'')$ if and only if $[i',j']_n = [i'',j'']_n$
for some (and hence all) $n$ such that $i',j',i'',j'' \in [n+1]$.
We write $\langle i,j \rangle$ for the equivalence class of the pair $(i,j)$.
We will see that we can
think of the equivalence classes as being the vertices of
a binary-tree-like object.  For $i \in \bN$
the equivalence class of the pair $(i,i)$ has only one element
and it will sometimes be convenient to denote this equivalence class
simply by $i$.  With this convention, we regard $\langle i,j \rangle$
as being the most recent common ancestor of the leaves $i$ and $j$.
\end{definition}

\begin{definition}
Define a partial order $<_L$
on the set of equivalence classes by
declaring for $(i',j'), (i'',j'') \in \bN \times \bN$
that
$\langle i',j' \rangle <_L \langle i'',j''\rangle$ if and only if 
for some (and hence all) $n$ such that $i',j',i'',j'' \in [n+1]$
we have $[i',j']_n = u_1 \ldots u_k$
and $[i'',j'']_n = u_1 \ldots u_k   0 v_1 \ldots v_\ell$ 
for some $u_1, \ldots, u_k, v_1, \ldots, v_\ell \in \{0,1\}$.
We interpret the ordering $\langle i',j' \rangle <_L \langle i'',j''\rangle$
as the ``vertex'' $\langle i'',j''\rangle$ being below and to the left
of the ``vertex'' $\langle i',j' \rangle$.
Similarly, we define another partial order $<_R$ by
declaring 
that
$\langle i',j' \rangle <_R \langle i'',j''\rangle$ if and only if 
for some (and hence all) $n$ such that $i',j',i'',j'' \in [n+1]$
we have $[i',j']_n = u_1 \ldots u_k$
and $[i'',j'']_n = u_1 \ldots u_k   1 v_1 \ldots v_\ell$ 
for some $u_1, \ldots, u_k, v_1, \ldots, v_\ell \in \{0,1\}$.
We interpret the ordering $\langle i',j' \rangle <_R \langle i'',j''\rangle$
as the ``vertex'' $\langle i'',j''\rangle$ being below and to the right
of the ``vertex'' $\langle i',j' \rangle$.
\end{definition}

\begin{remark}
\label{R:didendritic_properties}
The equivalence relation $\equiv$ and the partial orders $<_L$ and
$<_R$ have a number of simple properties that it is useful to record.
\begin{itemize}
\item
For $i,j \in \bN$, $(i,j) \equiv (j,i)$.
\item
For $i,j, k\in \bN$, $(i,j) \not \equiv (k,k)$ unless $i=j=k$.
\item
For $i,j \in \bN$ with $i \ne j$, either
$\langle i,j \rangle <_L \langle i,i \rangle$
and
$\langle i,j \rangle <_R \langle j,j \rangle$,
or
$\langle i,j \rangle <_R \langle i,i \rangle$
and
$\langle i,j \rangle <_L \langle j,j \rangle$.
\item
For $h,i,j,k \in \bN$, if
$\langle h,i \rangle <_L \langle j,k \rangle$, then
$\langle h,i \rangle \not <_R \langle j,k \rangle$.
\item
For $h,i,j,k \in \bN$, if
$\langle h,i \rangle <_R \langle j,k \rangle$, then
$\langle h,i \rangle \not <_L \langle j,k \rangle$.
\item
For $f,g,h,i,j,k \in \bN$, if
$\langle f,g \rangle <_L \langle h,i \rangle$ and
$\langle h,i \rangle <_R \langle j,k \rangle$, then
$\langle f,g \rangle <_L \langle j,k \rangle$.
\item
For $f,g,h,i,j,k \in \bN$, if
$\langle f,g \rangle <_R \langle h,i \rangle$ and
$\langle h,i \rangle <_L \langle j,k \rangle$, then
$\langle f,g \rangle <_R \langle j,k \rangle$.
\end{itemize}
\end{remark}
%
\begin{definition}\label{D:didendritic_properties}An equivalence relation on $\bN \times \bN$ and two partial orders
on the associated equivalence classes form a {\em didendritic system}
if they satisfy the conditions listed in Remark~\ref{R:didendritic_properties}.\footnote{This definition does not capture the class of objects we intend it to.
An additional axiom is needed so that what follows, particularly Remark~\ref{R:didendritic_to_bridge},
is correct.  See Section~\ref{S9_2}.}
 (We
have coined the word ``didendritic'' from the Greek roots 
``$\delta\iota\varsigma$'' = ``two, twice or double'' and
``$\delta\epsilon\nu\delta\rho\iota\tau\eta\varsigma$'' 
= ``of or pertaining to a tree, tree-like'' 
as an adjective meaning ``binary tree-like''.)
\end{definition}

\begin{notation}
From now on, we will use the notations $\equiv$, 
$\langle \cdot, \cdot \rangle$, $<_L$, and $<_R$
to denote the equivalence relation, equivalence classes,
and the two partial orders of an arbitrary didendritic system.
\end{notation}

%
%

\begin{remark}
\label{R:didendritic_to_bridge}
Given a didendritic system 
$(\equiv, \, \langle \cdot, \cdot \rangle, \, <_L, \, <_R)$
and $n \in \bN$, there is a unique binary tree with $n+1$
leaves labeled by $i = \langle i, i \rangle$, $i \in [n+1]$,
and internal vertices labeled by $\langle i,j \rangle$, 
$i,j \in [n+1]$, $i \ne j$.  Using the representation of
binary trees as subsets of $\{0,1\}^*$, the root $\emptyset$
is labeled by the unique equivalence class $\langle p,q \rangle$,
$p,q \in [n+1]$,
such that there is no equivalence class 
$\langle r,s \rangle$, $r,s \in [n+1]$, for which 
$\langle r,s \rangle <_L \langle p,q \rangle$
or
$\langle r,s \rangle <_R \langle p,q \rangle$.
If the equivalence class
$\langle h,i \rangle$, $h,i \in [n+1]$, is the label of 
the vertex $v_1 \ldots v_r$ of the tree and the equivalence class
$\langle j,k \rangle$, $j,k \in [n+1]$
is such that $\langle h,i \rangle <_L \langle j,k \rangle$
(respectively, $\langle h,i \rangle <_R \langle j,k \rangle$)
and there is no equivalence class $\langle \ell, m \rangle$
with $\langle h,i \rangle <_L \langle \ell, m \rangle$ 
(respectively, and $\langle h,i \rangle <_R \langle \ell, m \rangle$)
and
$\langle \ell, m \rangle <_L \langle j,k \rangle$ 
or
$\langle \ell, m \rangle <_R \langle j,k \rangle$,
then
$v_1 \ldots v_r 0$ (respectively, $v_1 \ldots v_r 1$)
is a vertex of the tree with label $\langle j,k \rangle$.

If a labeled binary tree is constructed in this way and another
one is constructed from the same didendritic system with $n$ replaced by
$n+1$, then the first labeled binary tree can be produced 
from the second as follows.
\begin{itemize}
\item
The leaf labeled $n+2 = \langle n+2, n+2 \rangle$ is deleted, 
along with its sibling (which may or may not be a leaf).
\item 
If the sibling of the leaf labeled $n+2$ is also a leaf, then 
the common parent (which is now a leaf) is assigned the sibling's label.
\item
If the sibling of the leaf labeled $n+2$ is not a leaf, then the subtree
below the sibling is attached to the common parent.  The labelings
of the vertices in the subtree are unchanged and the common parent is
assigned the sibling's label.
\end{itemize}
\end{remark}

\begin{definition}\label{def5_8}
Given a didendritic system 
$\DD = (\equiv, \, \langle \cdot, \cdot \rangle, \, <_L, \, <_R)$
and a permutation $\sigma$ of $\bN$ such
that $\sigma(i) = i$ for all but finitely many $i \in \bN$,
the didendritic system 
$\DD^\sigma = (\equiv^\sigma, \, \langle \cdot, \cdot \rangle^\sigma, \, <_L^\sigma, \, <_R^\sigma)$ is defined by
\begin{itemize}
\item
$(i',j') \equiv^\sigma (i'',j'')$ 
if and only if 
$(\sigma(i'), \sigma(j')) \equiv (\sigma(i''), \sigma(j''))$,
\item
$\langle i, j \rangle^\sigma$ is the equivalence class of
the pair $(i,j)$ for the equivalence relation $\equiv^\sigma$,
\item
$\langle h,i \rangle^\sigma <_L^\sigma \langle j,k \rangle^\sigma$
if and only if
$\langle \sigma(h), \sigma(i) \rangle <_L \langle \sigma(j), \sigma(k) \rangle$,
\item
$\langle h,i \rangle^\sigma <_R^\sigma \langle j,k \rangle^\sigma$
if and only if
$\langle \sigma(h), \sigma(i) \rangle <_R \langle \sigma(j), \sigma(k) \rangle$.
\end{itemize}
A random didendritic system 
$\DD = (\equiv, \, \langle \cdot, \cdot \rangle, \, <_L, \, <_R)$
is {\em exchangeable} if for each permutation $\sigma$ of $\bN$ such
that $\sigma(i) = i$ for all but finitely many $i \in \bN$ 
the random didendritic system 
$\DD^\sigma$
has the same distribution as
$\DD$.
\end{definition}

In view of the similarity of the procedure described before 
Definition~\ref{def5_8} and the procedure described at the beginning of  this section,  
the following result is obvious and shows that characterizing the
family of infinite R\'emy bridges is equivalent to characterizing the
family of exchangeable random didendritic systems.

\begin{lemma}
The random didendritic system  corresponding to the labeled version
of an infinite R\'emy bridge is exchangeable.  Conversely, the sequence of random 
labeled binary trees produced from an
exchangeable random didendritic system
by the procedure described in
Remark~\ref{R:didendritic_to_bridge} is an infinite R\'emy bridge.
\end{lemma}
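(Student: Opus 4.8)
The plan rests on the dictionary between didendritic systems and backward-consistent sequences of leaf-labelled binary trees, together with the characteristic feature of the labelled version of an infinite R\'emy bridge: conditionally on the unlabelled tree the labels are uniform. First I would record the bijective correspondence implicit in Remark~\ref{R:didendritic_to_bridge}: a didendritic system is the same datum as a sequence $(\tau_n)_{n\in\bN}$ in which $\tau_n$ is a binary tree with $n+1$ leaves labelled by $[n+1]$ and $\tau_n$ is produced from $\tau_{n+1}$ by the deterministic backward step (delete the leaf labelled $n+2$ and its sibling, close the gap, keep the other labels). Since $\tau_{n+1}$ determines $\tau_n,\dots,\tau_1$, the law of a random didendritic system is pinned down by the laws $\mathcal L(\tau_n)$ of its individual levels, and indeed by these laws along any cofinal set of indices; this reduces every distributional claim below to a statement about one-dimensional marginals.

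\emph{The system of a labelled bridge is exchangeable.} Let $(\tilde T^\infty_n)_{n\in\bN}$ be the labelled version of the infinite R\'emy bridge and $\DD$ the associated didendritic system; its level-$n$ tree is $\tilde T^\infty_n$, and the axioms listed in Remark~\ref{R:didendritic_properties} hold because they are elementary facts about binary trees regarded as subsets of $\{0,1\}^\star$. Fix a permutation $\sigma$ of $\bN$ fixing every point exceeding some $M$. Unwinding Definition~\ref{def5_8}, for $n+1\ge M$ the level-$n$ tree of $\DD^\sigma$ is $\tilde T^\infty_n$ with its labels transported by $\sigma$. Conditionally on $T^\infty_n$ the labelling $\tilde T^\infty_n$ is uniform over its $(n+1)!$ labellings, so applying a fixed relabelling does not change this conditional law; hence the level-$n$ tree of $\DD^\sigma$ equals $\tilde T^\infty_n$ in distribution for every $n\ge M-1$, and by the reduction above $\DD^\sigma\stackrel{d}{=}\DD$.

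\emph{The system of an exchangeable didendritic system is a bridge.} Conversely, let $\DD$ be exchangeable, let $(\tilde T_n)_{n\in\bN}$ be the labelled trees produced from it via Remark~\ref{R:didendritic_to_bridge}, and $(T_n)_{n\in\bN}$ their unlabelled versions; then $T_1=\aleph$. Invariance of $\DD$ under the relabellings fixing every label larger than $n+1$ makes the law of $\tilde T_n$ invariant under $S_{n+1}$, and since a planar binary tree is rigid its $(n+1)!$ labellings form a single $S_{n+1}$-orbit; hence, conditionally on $T_n$, the labelling $\tilde T_n$ is uniform. Now fix $N$: the vector $(T_1,\dots,T_N)$ is the deterministic image of $\tilde T_N$ obtained by iterating the backward step and discarding labels, so conditionally on $T_N=\tt$ it is that deterministic image of a uniformly labelled copy of $\tt$. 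By the discussion opening Section~\ref{S:N_tree}, one backward step applied to a uniformly labelled tree performs one step of the backward R\'emy dynamics and again produces a uniform labelling; iterating, $(T_1,\dots,T_N)$ conditioned on $\{T_N=\tt\}$ has the law of the R\'emy bridge pinned at $\tt$. Therefore $\mathcal L(T_1,\dots,T_N)=\int\mathcal L(T_1^\tt,\dots,T_N^\tt)\,\bP\{T_N\in d\tt\}$. Because conditioning such a mixture on $(T_{n+1},\dots,T_N)$ also determines the mixing endpoint $T_N$, we get $\bP\{T_n\in\cdot\mid T_{n+1},\dots,T_N\}=q(\cdot,T_{n+1})$, the common backward R\'emy transition; letting $N\to\infty$ yields $\bP\{T_n\in\cdot\mid T_{n+1},T_{n+2},\dots\}=q(\cdot,T_{n+1})$, so $(T_n)_{n\in\bN}$ is a Markov process started from $\aleph$ with the R\'emy backward transitions, i.e.\ an infinite R\'emy bridge.

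The genuinely delicate part is the second direction. Exchangeability of $\DD$ only directly supplies invariance of each level's law under relabelling; turning this into ``conditionally on the shape, the labelling is uniform'' uses the rigidity of planar binary trees, and the remaining work is to upgrade the level-wise distributional identity into the joint statement $\mathcal L(T_1,\dots,T_N)=\int\mathcal L(\text{pinned bridge})\,\bP\{T_N\in d\tt\}$ and then to notice that conditioning a mixture of pinned bridges on a suffix $(T_{n+1},\dots,T_N)$ automatically selects the mixing variable, so that the backward Markov property with the correct kernel falls out. Everything else is the bookkeeping already carried out in Sections~\ref{S:infinite_bridges} and~\ref{S:N_tree}.
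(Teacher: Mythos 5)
Your proof is correct and follows exactly the route the paper has in mind: the paper declares the lemma ``obvious'' from the match between the deterministic backward step of Remark~\ref{R:didendritic_to_bridge} and the labeled backward R\'emy dynamics described at the start of Section~\ref{S:N_tree}, and your write-up simply supplies the details (backward determinism reducing everything to one-dimensional marginals, rigidity of planar binary trees giving uniformity of the labeling, and the mixture-of-pinned-bridges argument for the backward Markov property). No gaps.
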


With this result in mind, we now explore what sort of information
is required to uniquely specify a didendritic system.
From Remark~\ref{R:didendritic_to_bridge},
the subtree spanned by three distinct labeled leaves
$i,j,k \in \bN$ is one of twelve isomorphism types that
we depict in Figure~\ref{fig:all_three_leaf_trees} along with
notations for each one. 

\begin{figure}
	\centering
\includegraphics[width=9cm]{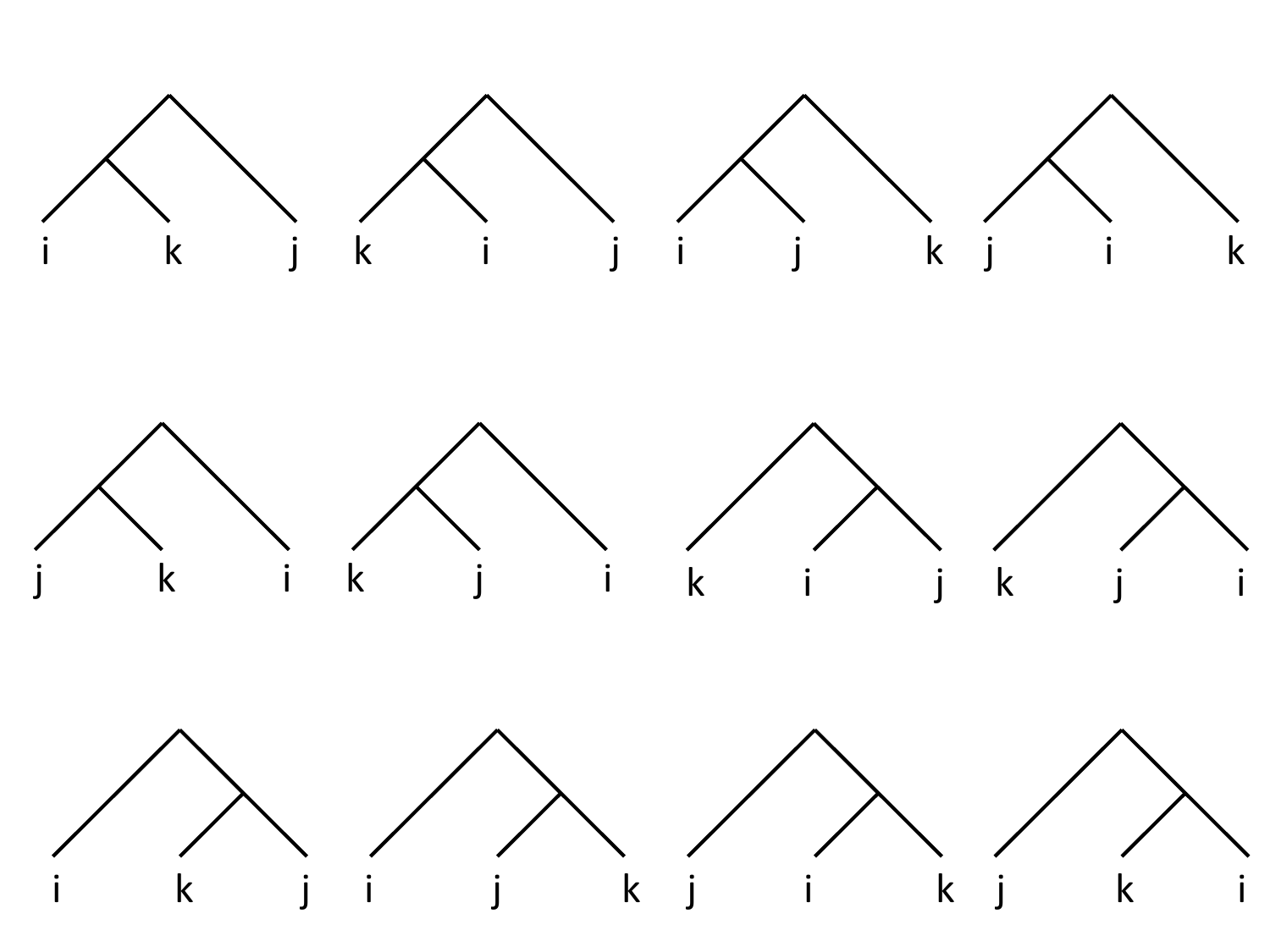}
\caption{\small The isomorphism types for the subtree spanned by $3$ leaves
of a leaf--labeled binary tree.  Going left to right and from top to bottom,
we denote these types by $((i,k),j)$, $((k,i),j)$, $((i,j),k)$,
$\ldots$, $(i,(j,k))$, $(j,(i,k))$, $(j,(k,i))$.}
	\label{fig:all_three_leaf_trees}
\end{figure}

\begin{lemma}
\label{L:puzzling}
Any didendritic system 
$(\equiv, \, \langle \cdot, \cdot \rangle, \, <_L, \, <_R)$
is uniquely determined by the isomorphism types of the
subtrees spanned by all triples of distinct labeled leaves.
\end{lemma}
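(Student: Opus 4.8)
The plan is to recover the full didendritic system --- that is, the equivalence relation $\equiv$ together with the two partial orders $<_L$ and $<_R$ --- from the collection of isomorphism types of all triples, by showing how each defining relation on pairs $(i,j)$ can be read off from finitely many of the triple-types. Since a didendritic system is, via Remark~\ref{R:didendritic_to_bridge}, equivalent to a coherent family of leaf-labeled binary trees $(\tilde T_n)_{n \in \bN}$ with $\tilde T_n$ having leaves $[n+1]$, it suffices to show that for each fixed $n$ the leaf-labeled tree $\tilde T_n$ is determined by the types of the triples drawn from $[n+1]$; the coherence across $n$ is automatic. So the problem reduces to the purely finite statement: a leaf-labeled binary tree on leaf set $[n+1]$ is determined by the $\binom{n+1}{3}$ three-leaf subtrees (each being one of the twelve types in Figure~\ref{fig:all_three_leaf_trees}).

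**First I would** fix the finite tree $\tilde T_n$ and explain how to reconstruct $\equiv$ on pairs. For distinct $i,j,k,\ell \in [n+1]$, the relation $[i,j]_n = [k,\ell]_n$ (i.e. $(i,j) \equiv (k,\ell)$) can be detected from the triples $\{i,j,k\}$ and $\{i,j,\ell\}$ together with $\{i,k,\ell\}$: the most recent common ancestors agree precisely when these triples exhibit the ``cherry'' pattern placing $i,j$ together below the split from $k$ (resp.\ $\ell$), and the two cherry-ancestors coincide. More carefully, $(i,j)\equiv(i,k)$ --- i.e.\ $[i,j]_n=[i,k]_n$ --- holds iff the triple $\{i,j,k\}$ has type $(i,(j,k))$ or $(i,(k,j))$ (the "caterpillar rooted at $i$" pattern), and the general case $(i,j)\equiv(k,\ell)$ with all four distinct reduces to this by comparing $[i,j]_n$ with $[i,k]_n$ and then $[i,k]_n$ with $[k,\ell]_n$, using that $[i,j]_n=[k,\ell]_n$ forces $i$ and $j$ to lie on opposite sides of that vertex while $k,\ell$ also lie on opposite sides. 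Each such comparison is a statement about one or two triples, so $\equiv$ is recovered.

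**Next I would** recover the orders $<_L$ and $<_R$ between equivalence classes. Given classes represented by $(h,i)$ and $(j,k)$, the relation $\langle h,i\rangle <_L \langle j,k\rangle$ says $[j,k]_n$ lies strictly below and to the left of $[h,i]_n$. To read this from triples, pick any one of $j,k$ --- say $j$ --- that is a strict descendant of $[h,i]_n$ on its left side: then, looking at the triple $\{h,i,j\}$, the vertex $[h,i]_n=h\wedge i$ separates $h$ from $i$, and $j$ hangs below one of them; the type of $\{h,i,j\}$ records both which side ($h$'s side or $i$'s side) and, jointly with the left/right labels already present in the type, whether that side is the left child of $[h,i]_n$. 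One then needs to check $[j,k]_n$ itself (not merely $j$) is below-left of $[h,i]_n$: this is automatic once $[j,k]_n \le [h,i]_n$ is ruled out and $j$ (equivalently $k$, by a parallel argument) is below-left, because $[j,k]_n$ is the meet of $j$ and $k$ and hence lies on the path from $[h,i]_n$ down to $j$. The parallel statement gives $<_R$. Assembling: every defining relation of $\DD$ restricted to labels in $[n+1]$ is a Boolean combination of triple-types, so $\tilde T_n$ --- and letting $n\to\infty$, the whole didendritic system --- is determined.

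**The main obstacle** will be the bookkeeping in the last step: correctly isolating, among the twelve types in Figure~\ref{fig:all_three_leaf_trees}, exactly which ones encode a given left/right descent relation, and verifying that passing from a statement about a single leaf $j$ below a vertex to the corresponding statement about the meet $[j,k]_n$ is valid (this uses the tree axioms in Remark~\ref{R:didendritic_properties}, in particular the transitivity-type bullets for $<_L$ through $<_R$). An alternative, cleaner route --- which I would probably adopt to avoid enumerating cases --- is to argue by induction on $n$: assuming $\tilde T_n$ is determined by the triples on $[n+1]$, the position at which leaf $n+2$ attaches to $\tilde T_n$ to form $\tilde T_{n+1}$ is determined by the triples $\{i,j,n+2\}$ for $i,j \in [n+1]$, since these pin down, for the edge of $\tilde T_n$ onto which $n+2$'s pendant edge is grafted, exactly which side of which vertex it lies on; the base case $n=2$ is immediate because a single triple is its own type. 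This inductive phrasing confines all the casework to the one-step extension and makes the proof short.
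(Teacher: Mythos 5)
Your proposal is correct: the core idea in both your argument and the paper's is that every defining relation of the didendritic system, restricted to any finite index set, is a Boolean combination of the triple types, but you decompose the reconstruction differently. The paper recovers $<_L$ first, as an explicit disjunction of twenty-four four-fold conjunctions of triple types for $\langle h,i\rangle <_L \langle j,k\rangle$, and then recovers $\equiv$ from $<_L$ and $<_R$ by comparing strict successor sets over all pairs $\langle \ell,m\rangle$. You go the other way: you recover $\equiv$ directly via chains of single-triple tests (e.g.\ $(i,j)\equiv(i,k)$ is read off from the type of $\{i,j,k\}$, and the four-distinct-index case follows by transitivity through one of the two possible pairings), and then recover $<_L$ via the observation that $\langle h,i\rangle <_L \langle j,k\rangle$ holds if and only if both $j$ and $k$ are strictly below and to the left of $\langle h,i\rangle$, each of which is a single-triple condition on $\{h,i,j\}$ and $\{h,i,k\}$ respectively; this is cleaner than the paper's enumeration and your inductive finite-tree variant (determining where leaf $n+2$ attaches from the triples $\{i,j,n+2\}$) is a legitimate further shortcut. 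The only loose ends are the ones you already flag as bookkeeping: you should include the mirror types $((j,k),i)$ and $((k,j),i)$ alongside $(i,(j,k))$ and $(i,(k,j))$ when testing $(i,j)\equiv(i,k)$, and you need to note separately how to handle the degenerate comparisons in which the four indices $h,i,j,k$ are not all distinct (the same caveat applies to the paper's proof).
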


\begin{proof}
Observe that, $\langle h,i \rangle <_L \langle j,k \rangle$
for $h,i,j,k \in \bN$
if and only if either one of the following six conditions
holds or one of the three
similar sets of six conditions with the roles of $h$ and $i$ 
interchanged or the roles of $k$ and $j$ interchanged holds:
\begin{itemize}
\item
$((i,j),h) \, \& \, ((i,k),h) \, \& \, ((j,k),h) \, \& \, (i,(j,k))$,
\item
$((j,i),h) \, \& \, ((k,i),h) \, \& \, ((j,k),h) \, \& \, ((j,k),i)$,
\item
$((i,j),h) \, \& \, ((i,k),h) \, \& \, ((j,k),h) \, \& \, ((i,j),k)$,
\item
$((j,i),h) \, \& \, ((i,k),h) \, \& \, ((j,k),h) \, \& \, ((j,i),k)$,
\item
$((j,i),h) \, \& \, ((i,k),h) \, \& \, ((j,k),h) \, \& \, (j,(i,k))$,
\item
$((j,i),h) \, \& \, ((k,i),h) \, \& \, ((j,k),h) \, \& \, (j,(k,i))$.
\end{itemize}

\medskip
\noindent Moreover, 
$(h,i) \equiv (j,k)$ (that is, $\langle h, i \rangle = \langle j,k \rangle$
if and only if for all $\ell,m \in \bN$,
$\langle h, i \rangle <_L \langle \ell, m \rangle
\Longleftrightarrow
\langle j, k \rangle <_L \langle \ell, m \rangle$
and
$\langle h, i \rangle <_R \langle \ell, m \rangle
\Longleftrightarrow
\langle j, k \rangle <_R \langle \ell, m \rangle$.
\end{proof}

\begin{remark}
\label{R:didentritic_array}
It follows from Lemma~\ref{L:puzzling} that any didendritic system has a unique
coding as an array indexed by $\{(i,j,k) \in \bN^3: i,j,k \; \text{distinct}\}$,
where the $(i,j,k)$ entry records the isomorphism type of the subtree spanned
by the leaves labeled $i,j,k$.  The triply indexed random array
corresponding to an exchangeable random didendritic system is jointly
exchangeable in the usual sense for random arrays 
(see, for example, \cite[Section 7.1]{MR2161313}).
\end{remark}

\begin{definition}\label{D:partialorder}
Define a third partial order $<$ on the set
of equivalence classes of $\bN \times \bN$
by declaring that
$\langle h,i \rangle < \langle j,k \rangle$
if either
$\langle h,i \rangle <_L \langle j,k \rangle$
or
$\langle h,i \rangle <_R \langle j,k \rangle$.
We interpret the ordering 
$\langle h,i \rangle < \langle j,k \rangle$
as the ``vertex'' $\langle j,k \rangle$ being
below the ``vertex'' $\langle h,i \rangle$.
\end{definition}

\begin{remark}
It is easy to see that if $\langle h,i \rangle$
and $\langle j,k \rangle$ are two equivalence classes,
then there is a unique ``most recent common ancestor''
$\langle \ell, m \rangle$ such that
$\langle \ell, m \rangle \le \langle h,i \rangle$,
$\langle \ell, m \rangle \le \langle j,k \rangle$,
and if $\langle p,q \rangle$ also has these two properties,
then $\langle p,q \rangle \le \langle \ell, m \rangle$.
Moreover, we can choose $\ell,m$ so that
$\ell \in \{h,i\}$ and $m \in \{j,k\}$.  Indeed, for any
$n \in \bN$ we can, by Remark~\ref{R:didendritic_to_bridge}, 
think of the equivalence classes
$\{\langle i, j \rangle : i,j \in [n+1]\}$ as the vertices
of a binary tree with its leaves labeled by $[n+1]$.
When the didendritic system was constructed 
from the labeled version $(\tilde T_n^\infty)_{n\in \mathbb N}$ 
of an infinite R\'emy bridge $(T_n^\infty)_{n\in \mathbb N}$,
this leaf--labeled binary tree is just $\tilde T_n^\infty$. 
\end{remark}

\begin{lemma}
\label{L:partial_order_plus_left-right}
Any didendritic system 
$(\equiv, \, \langle \cdot, \cdot \rangle, \, <_L, \, <_R)$
is uniquely determined by the equivalence relation $\equiv$,
the partial order $<$, and a determination for each pair
of distinct labeled leaves $i,j \in \bN$ whether
\[
\begin{split}
\langle i,j \rangle <_L i \quad & \text{and} \quad  \langle i,j \rangle <_R j \\
                                & \text{or} \\
\langle i,j \rangle <_L j \quad & \text{and} \quad  \langle i,j \rangle <_R i. \\	
\end{split}
\]
\end{lemma}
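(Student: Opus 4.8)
The plan is to reconstruct the two partial orders $<_L$ and $<_R$ from the triple $(\equiv,\,<,\,\text{leaf--orientations})$ by an explicit recipe; this suffices because a didendritic system \emph{is} the tuple $(\equiv,\langle\cdot,\cdot\rangle,<_L,<_R)$, the equivalence classes $\langle\cdot,\cdot\rangle$ are already a function of $\equiv$, and by items~4 and~5 of Remark~\ref{R:didendritic_properties} together with Definition~\ref{D:partialorder} the order $<$ is the \emph{disjoint} union of $<_L$ and $<_R$, so once $<_L$ is recovered we also get $<_R$ as $<\,\setminus\,<_L$. So it is enough to decide, for equivalence classes $\langle h,i\rangle<\langle j,k\rangle$, whether $\langle h,i\rangle<_L\langle j,k\rangle$ using only $\equiv$, $<$ and the leaf data. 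Throughout I would fix, for whatever finite set of indices is currently in play, an $n$ with those indices in $[n+1]$ and work inside the leaf--labeled binary tree on leaves $[n+1]$ with vertex set $\{\langle i,j\rangle:i,j\in[n+1]\}$ supplied by Remark~\ref{R:didendritic_to_bridge}, on which $<$, $<_L$, $<_R$ are, by construction, ``is an ancestor of'', ``is an ancestor of via the left child'', ``is an ancestor of via the right child''; the consistency built into the definitions of $\equiv,<,<_L,<_R$ makes this independent of the choice of $n$.

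The first step is a reduction from a pair of equivalence classes to a class and a leaf: if $\langle h,i\rangle<\langle j,k\rangle$ then $\langle h,i\rangle<_L\langle j,k\rangle$ if and only if $\langle h,i\rangle<_L j$, where $j$ denotes the class $\langle j,j\rangle$. Indeed $\langle j,k\rangle\le j$ by item~3 of Remark~\ref{R:didendritic_properties}, so $\langle h,i\rangle<j$; and in the finite tree picture the first edge on the path down from the vertex $\langle h,i\rangle$ to the leaf $j$ is the first edge on the path from $\langle h,i\rangle$ to $\langle j,k\rangle$, whence the claim. (A purely formal version reads off the two cases $\langle j,k\rangle<_L j$ and $\langle j,k\rangle<_R j$ and composes them with the assumed relation between $\langle h,i\rangle$ and $\langle j,k\rangle$ using the mixed--transitivity items~6 and~7 of Remark~\ref{R:didendritic_properties} and mutual exclusivity, but the tree picture is cleaner.)

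The second step determines $\langle h,i\rangle<_L\ell$ for a vertex $\langle h,i\rangle$ with $h\ne i$ and a leaf $\ell$ with $\langle h,i\rangle<\ell$. If $\ell\in\{h,i\}$ this is exactly the prescribed orientation of the pair $(h,i)$. If $\ell\notin\{h,i\}$, then in the finite tree the vertex $\langle h,i\rangle$ has two child subtrees, one containing $h$ and the other containing $i$, and $\ell$ lies in exactly one of them; equivalently, exactly one of $(h,\ell)\equiv(h,i)$ and $(i,\ell)\equiv(h,i)$ holds, the former precisely when $\ell$ lies on the same side as $i$ and the latter precisely when $\ell$ lies on the same side as $h$. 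Hence $\langle h,i\rangle<_L\ell$ holds if and only if either $(i,\ell)\equiv(h,i)$ and $\langle h,i\rangle<_L h$, or $(h,\ell)\equiv(h,i)$ and $\langle h,i\rangle<_L i$. Whether $\langle h,i\rangle<_L h$ (equivalently $\langle h,i\rangle<_L i$) is part of the leaf--orientation datum for the pair $(h,i)$, and the two alternatives are decided by $\equiv$; composing with the first step yields an explicit expression for $<_L$, hence for the whole didendritic system, in terms of $\equiv$, $<$ and the leaf orientations, which is the assertion.

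The only point requiring genuine care is the reduction in the first step: one must verify that combining the relation between $\langle h,i\rangle$ and $\langle j,k\rangle$ with the relation between $\langle j,k\rangle$ and the leaf $j$ always produces the predicted $<_L$-versus-$<_R$ alternative, which is exactly where the mixed--transitivity items of Remark~\ref{R:didendritic_properties}, or more transparently the passage to an honest finite binary tree via Remark~\ref{R:didendritic_to_bridge}, are needed; once that is in hand, identifying ``which side of $\langle h,i\rangle$ the leaf $\ell$ lies on'' purely in terms of $\equiv$ is routine, and the rest is bookkeeping.
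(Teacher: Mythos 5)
Your argument is correct, but it takes a genuinely different route from the paper's. The paper proves this lemma by falling back on Lemma~\ref{L:puzzling}: it suffices to recover the isomorphism type of the subtree spanned by each triple of distinct leaves, and each of the twelve types is then expressed as a conjunction of conditions on $\equiv$, $<$ and the pairwise leaf orientations (the proof writes this out for the type $((i,k),j)$ and leaves the remaining eleven to the reader). You instead reconstruct $<_L$ directly: reduce $\langle h,i\rangle <_L \langle j,k\rangle$ to $\langle h,i\rangle <_L j$ by following the path down to a representative leaf of the lower class, then decide which side of $\langle h,i\rangle$ the leaf $j$ lies on by testing whether $(h,j)\equiv(h,i)$ or $(i,j)\equiv(h,i)$ and consulting the orientation datum for the pair $(h,i)$; finally $<_R$ is recovered as $<$ minus $<_L$. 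Both proofs ultimately rest on the same foundation — the identification of finite restrictions with leaf--labeled binary trees in Remark~\ref{R:didendritic_to_bridge}, or equivalently the transitivity and exclusivity items of Remark~\ref{R:didendritic_properties} — but yours is self-contained, bypasses Lemma~\ref{L:puzzling} entirely, and yields an explicit reconstruction formula rather than a case check over twelve isomorphism types. One small wording slip: $\langle h,i\rangle <_L h$ is not \emph{equivalent} to $\langle h,i\rangle <_L i$ (exactly one of the two holds); what you mean, correctly, is that the orientation datum for $(h,i)$ determines both.
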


\begin{proof}
Because of Lemma~\ref{L:puzzling}, it suffices to show that it is possible
to reconstruct from the given data the isomorphism types 
of the subtrees spanned by all triples of distinct labeled leaves.
For distinct $i,j,k \in \bN$, the isomorphism type assignment $((i,k),j)$ is equivalent to
\[
\langle i,j \rangle = \langle k,j \rangle < \langle i,k \rangle
\]
and
\[
\begin{split}
\langle i,k \rangle <_L i & \quad \&  \quad \langle i,k \rangle <_R k \\
\langle i,j \rangle <_L i & \quad \&  \quad \langle i,j \rangle <_R j \\
\langle k,j \rangle <_L k & \quad \&  \quad \langle k,j \rangle <_R j. \\
\end{split}
\]
Similar observations for the other eleven isomorphism types establish the result.
\end{proof}

\begin{remark}
\label{R:jointly_exchangeable}
We have seen that any infinite R\'emy bridge $(T_n^\infty)_{n \in \bN}$
has a uniquely defined labeled version $(\tilde T_n^\infty)_{n \in \bN}$
(in the sense that the distribution of the sequence
$(\tilde T_n^\infty)_{n \in \bN}$ is uniquely
specified by the distribution of the sequence $(T_n^\infty)_{n \in \bN}$)
and also that a labeled infinite R\'emy bridge corresponds, via a bijection
between infinite bridge paths and didendritic systems,
to a unique exchangeable random didendritic system. 

Our aim is to find concrete representations of the extremal infinite R\'emy bridges 
(recall that an infinite R\'emy bridge is extremal if it has a trivial tail $\sigma$-field). 
To this end, it will be useful to relate the extremality of an infinite R\'emy bridge to 
properties of the associated exchangeable random didendritic system.
We say that an exchangeable random didendritic system $\DD$ is {\em ergodic} if 
\[
\bP(\{\DD \in A\} \triangle \{\DD^\sigma \in A\}) = 0
\] 
for all permutations $\sigma$ of $\bN$ such
that $\sigma(i) = i$ for all but finitely many $i \in \bN$ implies that 
\[
\bP\{\DD \in A\} \in \{0,1\}.
\] 
By classical results on ergodic decompositions (see, for example \cite[Theorem~A 1.4]{MR2161313}),
an exchangeable random didendritic system with distribution $\epsilon$ is ergodic
if and only if there is no decomposition 
$\epsilon = p' \epsilon' + p'' \epsilon''$,
where $\epsilon', \epsilon''$ are distinct distributions 
of exchangeable random didendritic systems, $p',p'' > 0$, and $p' + p'' = 1$.
Also, it follows from Remark~\ref{R:didentritic_array} and a result of Aldous
(see, for example, \cite[Lemma~7.35]{MR2161313}) that ergodicity 
is further equivalent to the independence of the exchangeable random didendritic systems induced
by disjoint subsets of $\bN$, where here we extend the definition of a didendritic
system in the obvious manner to allow an equivalence relation and partial orders that are defined on an underlying
countable (possibly finite) set other than $\bN$.
\end{remark}

\begin{proposition}
\label{P:extremal_vs_ergodic}
An infinite R\'emy bridge is extremal if and only if
the associated exchangeable random didendritic system is ergodic.
\end{proposition}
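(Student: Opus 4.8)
The plan is to set up an affine bijection between the convex set $\mathcal{B}$ of distributions of infinite R\'emy bridges and the convex set $\mathcal{D}$ of distributions of exchangeable random didendritic systems, and then to observe that ``extremal'' and ``ergodic'' are, for the respective objects, exactly the property of being an extreme point. Both sets are convex: a mixture of processes all sharing the common backward R\'emy transitions again has those transitions (a one-line Bayes computation shows that if $\bP\{Y_n = \ss \mid Y_{n+1} = \tt\}$ and $\bP\{Y'_n = \ss \mid Y'_{n+1} = \tt\}$ both equal $q(\ss,\tt)$, so does the analogous conditional probability for any mixture of the laws of $Y$ and $Y'$), so a mixture of infinite R\'emy bridges is an infinite R\'emy bridge; and exchangeability is visibly preserved under mixing. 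By the discussion at the end of Section~\ref{S:Martin_general} (the affine bijection $h \mapsto \bQ$-type distribution between $\bH_{1,+}$ and $\mathcal{B}$, whose extreme points are the extremal harmonic functions, i.e.\ the bridges with trivial tail $\sigma$-field), an infinite R\'emy bridge is extremal if and only if its distribution is not a nondegenerate mixture of elements of $\mathcal{B}$. By the ergodic decomposition result quoted in Remark~\ref{R:jointly_exchangeable} (e.g.\ \cite[Theorem~A 1.4]{MR2161313}), an exchangeable random didendritic system is ergodic if and only if its distribution is not a nondegenerate mixture of elements of $\mathcal{D}$.

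Next I would assemble the bijection $\Phi : \mathcal{B} \to \mathcal{D}$ from the material of Section~\ref{S:N_tree}. Passing from an infinite R\'emy bridge $(T_n^\infty)_{n \in \bN}$ to its labeled version $(\tilde T_n^\infty)_{n \in \bN}$ is the application of the fixed randomization through the selection variables $(L_n)_{n \in \bN}$ of Definition~\ref{D:choice_variables}, which are independent of $(T_n^\infty)_{n \in \bN}$; this is the action of a Markov kernel, hence an affine map on distributions, and by the distributional uniqueness established in Section~\ref{S:N_tree} it is well defined on $\mathcal{B}$. Reading off the associated random array (Remark~\ref{R:didentritic_array}) then turns a labeled bridge path deterministically and bijectively into an exchangeable random didendritic system. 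In the reverse direction, from an exchangeable random didendritic system one produces labeled binary trees by the procedure of Remark~\ref{R:didendritic_to_bridge} and then discards the labels; by the Lemma in Section~\ref{S:N_tree} stating that the labeled binary trees built from an exchangeable random didendritic system form an infinite R\'emy bridge, the result lies in $\mathcal{B}$, and discarding labels is deterministic, hence affine. These two constructions are mutually inverse at the level of distributions: labeling and then unlabeling an infinite R\'emy bridge returns it; and starting from an exchangeable random didendritic system, forming the labeled trees, unlabeling, and relabeling returns the original distribution, because exchangeability of the didendritic system forces the conditional law of $\tilde T_n^\infty$ given $T_n^\infty$ to be uniform over the $(n+1)!$ labelings, so the labeled trees one began with already satisfy the defining properties of ``the labeled version'' of the resulting bridge and those properties pin down the distribution uniquely. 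Thus $\Phi$ is an affine bijection, and — using injectivity to keep the two components distinct — both $\Phi$ and $\Phi^{-1}$ carry nondegenerate mixtures to nondegenerate mixtures.

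Combining the three facts — $\Phi$ an affine bijection with $\Phi^{-1}$ also respecting nondegenerate mixtures, ``extremal $\Leftrightarrow$ not a nondegenerate mixture in $\mathcal{B}$'', and ``ergodic $\Leftrightarrow$ not a nondegenerate mixture in $\mathcal{D}$'' — gives that the infinite R\'emy bridge is extremal if and only if the associated exchangeable random didendritic system is ergodic. The substantive part of the argument is the bookkeeping in the middle paragraph; the main obstacle, such as it is, is to verify carefully that the passage between bridges and didendritic systems is genuinely a bijection \emph{at the level of distributions} rather than merely on individual realizations, for which the distributional uniqueness statements of Section~\ref{S:N_tree} are exactly what is required. (An alternative route works directly with tail $\sigma$-fields: by Lemma~\ref{L:tail_relation}, $\sigma\{\tilde T_n^\infty : n \in \bN\}$ splits as $\sigma\{L_p : p \in \bN\} \vee \cG_\infty$ with $\cG_\infty$ the tail of $(T_n^\infty)_{n \in \bN}$, and since $\cG_\infty$ is a function of the unlabeled bridge it is contained in the permutation-invariant $\sigma$-field of $\DD$, which immediately yields ``extremal $\Leftarrow$ ergodic''; the converse is awkward there because it requires tracking how relabelings act on the variables $L_p$, which is why the convexity argument above is preferable.)
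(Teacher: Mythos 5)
Your proposal is correct and follows essentially the same route as the paper's own proof: both reduce ``extremal'' to ``not a nondegenerate mixture of infinite R\'emy bridge laws'' via the unique integral representation over the minimal boundary, reduce ``ergodic'' to ``not a nondegenerate mixture of exchangeable didendritic system laws'' via the classical ergodic decomposition, and transfer one to the other through the affine bijection given by labeling/unlabeling composed with the path-to-didendritic-system bijection. (Your parenthetical tail-$\sigma$-field route is in fact carried out in full in Remark~\ref{R:extremal_vs_ergodic} of the paper, where the converse direction is handled via Lemma~\ref{L:tail_relation}-style arguments and Kolmogorov's zero--one law.)
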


\begin{proof}
Let $\cT$ 
be the set of sequences of binary trees 
that can arise as a sample path of an infinite R\'emy bridge  and 
let $\tilde \cT$ 
be the set of sequences of leaf--labeled binary trees that
can arise as a sample path of a labeled infinite R\'emy bridge. 

The distribution $\alpha$ of an infinite R\'emy bridge has a unique representation
of the form $\alpha = \int \bQ^y \, \mu(dy)$ for a probability measure $\mu$
concentrated on the minimal Doob--Martin boundary of the R\'emy chain,
where $\bQ^y$ is the distribution of the infinite R\'emy bridge
corresponding to the boundary point $y$.
The infinite R\'emy bridge is extremal if and only if $\mu$ is
a point mass, which is in turn equivalent to the condition
that  it is not possible to write
$\alpha = p' \alpha' + p'' \alpha''$, where $\alpha', \alpha''$
are distributions of infinite R\'emy bridges,
$p',p'' > 0$, and $p' + p'' = 1$.

Recall from Remark~\ref{R:Qy}
that if $\alpha = \int \bQ^y \, \mu(dy)$ is the distribution of
an infinite R\'emy bridge, where
$\bQ^y$ is the distribution of the infinite R\'emy bridge
corresponding to the boundary point $y$, 
then $\Lambda(\alpha) := \int \tilde \bQ^y \, \mu(dy)$
is the distribution of the associated labeled infinite R\'emy bridge,
where $\tilde \bQ^y$ is the distribution of the labeled infinite R\'emy bridge
corresponding to the boundary point $y$.
Writing $\phi: \tilde \cT \to \cT$ for the map that removes the labels from each tree
in a path, we see that the map $\Lambda$ is bijective with inverse $\Upsilon$
given by $\Upsilon(\tilde \alpha) = \tilde \alpha \circ \phi^{-1}$
when $\tilde \alpha$ is the distribution of a labeled infinite R\'emy bridge.

It is clear that if $\alpha, \alpha', \alpha''$
are distributions of infinite R\'emy bridges,
$p',p'' > 0$, $p' + p'' = 1$, and
$\alpha = p' \alpha' + p'' \alpha''$,
then $\Lambda(\alpha) = p' \Lambda(\alpha') + p'' \Lambda(\alpha'')$.
Similarly, if $\tilde \alpha, \tilde \alpha', \tilde \alpha''$
are distributions of labeled infinite R\'emy bridges,
$p',p'' > 0$, $p' + p'' = 1$, and
$\tilde \alpha = p' \tilde \alpha' + p'' \tilde \alpha''$,
then $\Upsilon(\tilde \alpha) = p' \Upsilon(\tilde \alpha') + p'' \Upsilon(\tilde \alpha'')$.
In short, an infinite R\'emy bridge has a nontrivial
tail $\sigma$-field if and only if it is distributed as a nontrivial mixture
of infinite R\'emy bridges, and this in turn is equivalent to the associated
labeled infinite R\'emy bridge being distributed as a nontrivial mixture
of labeled infinite R\'emy bridges.

Let $\cD$ be the set of  didendritic systems.
Write $\psi: \tilde \cT \to \cD$ for the map that takes
a sequence that can arise as a sample path of a labeled
infinite R\'emy bridge and turns it into a didendritic system.

Because $\psi$ is a bijection,
a probability measure $\gamma$ on $\tilde \cT$ that is the distribution
of a labeled infinite R\'emy bridge is a nontrivial mixture 
$\gamma = p' \gamma' + p'' \gamma''$,
where $p',p'' > 0$, $p' + p'' = 1$, and $\gamma', \gamma''$ are distinct
distributions of labeled infinite R\'emy bridges, if and only if
$\gamma \circ \psi^{-1} = p' \epsilon' + p'' \epsilon''$, where
$\epsilon', \epsilon''$ are distinct probability measures
on $\cD$ (in which case $\epsilon' = \gamma' \circ \psi^{-1}$
and $\epsilon'' = \gamma'' \circ \psi^{-1}$).  

Combining all of the above equivalent conditions establishes the result.
\end{proof}

\begin{remark}
\label{R:extremal_vs_ergodic}
The equivalence of Proposition~\ref{P:extremal_vs_ergodic} is central 
to the subsequent development and so we sketch the following alternative
``bare hands'' proof that is also interesting in its own right.\footnote{This alternative proof is incorrect, see Section \ref{S9_1}.}

Consider an infinite R\'emy bridge $(T_n^\infty)_{n \in \bN}$,
its labeled version $(\tilde T_n^\infty)_{n \in \bN}$, 
the corresponding sequence $(L_n)_{n \in \bN}$ defined
in Definition~\ref{D:choice_variables}, and 
the associated exchangeable random didendritic system
$\DD = (\equiv, \, \langle \cdot, \cdot \rangle, \, <_L, \, <_R)$.

Fix $m \in \bN$.  For $n \ge m$, let $\tilde T_{m,n}^\infty$ be the random partially leaf--labeled
tree that is obtained from $\tilde T_n^\infty$ 
by removing those labels that belong to $[m+1]$.
Thus, $m+1$ leaves of $\tilde T_{m,n}^\infty$ have no labels and the remaining 
$(n+1) - (m+1)$ leaves are labeled by elements of $[n+1] \setminus [m+1]$.
The $\sigma$-field consisting of events of the form $\{\DD \in A\}$
where $A$ is such that $\bP(\{\DD \in A\} \triangle \{\DD^\sigma \in A\}) = 0$
for all permutations $\sigma$ of $\bN$ that fix $\bN \setminus [m+1]$
is $\bP$-a.s. equal to 
$\sigma\{\tilde T_{m,n}^\infty: n \ge m\}$.

To establish Proposition~\ref{P:extremal_vs_ergodic}
it will therefore suffice to show that the $\sigma$-field
$\bigcap_{m \in \bN} \sigma\{\tilde T_{m,n}^\infty: n \ge m\}$
is $\bP$-trivial if and only if the $\sigma$-field
$\bigcap_{m \in \bN} \sigma\{T_n^\infty: n \ge m\}$
is $\bP$-trivial.  The former $\sigma$-field
contains the latter, and hence it further suffices to show
that if the latter $\sigma$-field is  $\bP$-trivial, then so is the former.
We therefore suppose from now on that 
$\bigcap_{m \in \bN} \sigma\{T_n^\infty: n \ge m\}$
is $\bP$-trivial.

For any $m \le n \le p$, the random partially leaf--labeled tree 
$\tilde T_{m,n}^\infty$ is a measurable function of $L_{m+1}, \ldots, L_p$ and
$T_p^\infty$, so that
\[
\sigma\{\tilde T_{m,n}^\infty: n \ge m\} 
\subseteq 
\sigma\{L_k : k > m\} 
\vee 
\sigma\{T_q^\infty : q \ge p\},
\]
for any $p \ge m$.  

An argument similar to that in the proof of Lemma~\ref{L:tail_relation} 
combined with the $\bP$-triviality of $\bigcap_{p \ge m} \sigma\{T_q^\infty : q \ge p\}$
gives 
\[
\begin{split} 
\sigma\{\tilde T_{m,n}^\infty: n \ge m\}
& \subseteq
\bigcap_{p \ge m} \left(\sigma\{L_k : k > m\} \vee \sigma\{T_q^\infty : q \ge p\}\right) \\
& =
\sigma\{L_k : k > m\} \vee \bigcap_{p \ge m} \sigma\{T_q^\infty : q \ge p\} \\
& = 
\sigma\{L_k : k > m\} \quad \text{$\bP$-a.s.} \\
\end{split}
\]

Since $\bigcap_{m \in \bN} \sigma\{L_k : k > m\}$ is $\bP$-trivial by Kolmogorov's
zero-one law, it follows that 
$\bigcap_{m \in \bN} \sigma\{\tilde T_{m,n}^\infty: n \ge m\}$ is also $\bP$-trivial,
as required.
\end{remark}

The next result shows that identifying the Doob--Martin boundary of the
R\'emy chain is equivalent to characterizing the extremal infinite
R\'emy bridges.

\begin{corollary}
\label{C:all_minimal}
If $y$ is an element of the Doob--Martin boundary of the R\'emy chain,
then the corresponding nonnegative harmonic function $K(\cdot,y)$ is extremal;
equivalently, the corresponding infinite R\'emy bridge is extremal.
There is thus a bijective correspondence between the Doob--Martin boundary of the R\'emy chain 
and the set of extremal infinite R\'emy bridges.
\end{corollary}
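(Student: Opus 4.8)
The plan is to show that the full and minimal boundaries coincide, i.e.\ that $K(\cdot,y)$ is extremal for every $y \in \partial E$; the asserted bijection is then a formality. By Proposition~\ref{P:extremal_vs_ergodic} and the description of $\dmin E$ recalled in Section~\ref{S:Martin_general}, it suffices to prove that for every $y \in \partial E$ the exchangeable random didendritic system $\DD^y$ attached to the infinite R\'emy bridge $\bQ^y$ is ergodic; and by the result of Aldous quoted in Remark~\ref{R:jointly_exchangeable} (see \cite[Lemma~7.35]{MR2161313}) this is equivalent to $\DD^y$ being \emph{dissociated}, i.e.\ to the restrictions of $\DD^y$ to disjoint finite subsets of $\bN$ being independent.

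First I would record the elementary identities tying $\bQ^y$ to leaf sampling. Since $\bQ^y$ is the Doob $h$-transform for $h = K(\cdot,y)$, since $h(\aleph)=1$, and since $p(\aleph,\ss) = 1/C_m$ for a tree $\ss$ with $2m+1$ vertices, one has $\bP^{\bQ^y}\{T_m^\infty = \ss\} = p(\aleph,\ss)\, h(\ss) = \frac{1}{C_m} K(\ss,y)$. If $(\tt_k)_{k\in\bN}$ is any sequence of trees with the number of leaves tending to infinity and $\tt_k \to y$ (such a sequence exists because $y\in\partial E$), then Corollary~\ref{C:D-M_sampling} gives $\bP\{S_m^{\tt_k} = \ss\} = \frac{1}{C_m} K(\ss,\tt_k) \to \frac{1}{C_m}K(\ss,y)$, so the law of $T_m^\infty$ under $\bQ^y$ is the weak limit of the law of the sampled subtree $S_m^{\tt_k}$. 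Passing to labeled versions: the law of the leaf--labeled tree $\tilde T_m^\infty$ under $\bQ^y$ (draw $\ss$ with the above probability, then label its leaves uniformly at random) equals the weak limit of the law of the leaf--labeled tree obtained by labelling the $m+1$ leaves of $S_m^{\tt_k}$ in the uniformly random order in which the corresponding leaves of $\tt_k$ were drawn. Hence, for any finite $I \subseteq \bN$, the restriction of $\DD^y$ to triples with all indices in $I$ is the limit in distribution as $k \to \infty$ of the corresponding array read off the sampled leaves of $\tt_k$, and likewise jointly for two disjoint sets $I$ and $J$.

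The heart of the matter is then an observation about sampling from a \emph{fixed} tree $\tt_k$. Fix disjoint finite $I,J \subseteq \bN$ and $m$ with $I\cup J \subseteq [m+1]$. Under the law in which the $m+1$ leaves are drawn \emph{with} replacement and leaf $i$ is the $i^{\mathrm{th}}$ draw, the tuple of leaves labelled by $I$ is independent of the tuple labelled by $J$, and since an embedding preserves ancestry and left--right structure, once all drawn leaves are distinct the sub-array on $I$ is a function of the first tuple and the sub-array on $J$ a function of the second. Drawing \emph{without} replacement differs from this only through the collision event, whose probability is at most $\binom{m+1}{2}/(m(\tt_k)+1) \to 0$; consequently the joint law of the pair of sub-arrays on $I$ and on $J$ read off $S_m^{\tt_k}$ lies within vanishing total variation distance of a product measure. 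Because the set of arrays on $I\cup J$ is finite and "being a product measure" is a closed condition, the limit law---that of $(\DD^y|_I,\DD^y|_J)$---is a product measure. As $I$ and $J$ range over all disjoint finite subsets of $\bN$, this shows $\DD^y$ is dissociated, hence ergodic, hence $\bQ^y$ is extremal by Proposition~\ref{P:extremal_vs_ergodic}; thus $\dmin E = \partial E$. Finally, distinct boundary points are distinct functions $K(\cdot,y)$, hence give distinct laws $\bQ^y$ (the law of $\bQ^y$ determines $h$ through $h(\ss)=C_m\bP^{\bQ^y}\{T_m^\infty=\ss\}$), so $y\mapsto\bQ^y$ is injective, and since every infinite R\'emy bridge has a unique representation $\int \bQ^y\,\mu(dy)$ with $\mu$ carried by $\dmin E = \partial E$, extremality forces $\mu$ to be a point mass, so the map is onto the extremal infinite R\'emy bridges. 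I expect the dissociation step to be the main obstacle: one must be careful that the sub-arrays depend only on the intended blocks of samples, that the with-replacement model yields genuine independence away from collisions, and that the without/with-replacement comparison and the finite-dimensional (hence total-variation) passage to the limit are handled cleanly.
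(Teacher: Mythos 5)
Your proposal is correct and follows essentially the same route as the paper's proof: reduce extremality to ergodicity of the associated didendritic system via Proposition~\ref{P:extremal_vs_ergodic}, pass to labeled finite bridges converging in distribution to the labeled infinite bridge, and establish dissociation by comparing without-replacement leaf sampling from $\tt_k$ to with-replacement sampling, where the subarrays on disjoint index sets are manifestly independent and the coupling discrepancy (your birthday bound) vanishes as $m(\tt_k)\to\infty$. The paper's argument is the same coupling, phrased via total variation distance between the arrays $\ZZ_{\ell,p}$ and $\ZZ_{\ell,p}^\dag$.
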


\begin{proof}
Suppose that $(\tt_p)_{p \in \bN}$ is a sequence of binary trees, 
where $\tt_p$ has $m(\tt_p) + 1$
leaves and $m(\tt_p) \to \infty$ as $p \to \infty$.  Suppose, moreover, that 
$\lim_{p \to \infty} \tt_p = y$ for some $y$ in the Doob--Martin boundary of 
the R\'emy chain.
We have to show that the harmonic function $K(\cdot,y)$ is extremal.  
Writing $(T_n^\infty)_{n \in \bN}$ for the
infinite R\'emy bridge associated with $y$, 
this is equivalent to showing that the tail
$\sigma$-field of $(T_n^\infty)_{n \in \bN}$ is $\bP$-a.s. trivial.  
By
Proposition~\ref{P:extremal_vs_ergodic}, this is further equivalent to establishing
that the exchangeable random didendritic system $\DD$ associated with 
$(T_n^\infty)_{n \in \bN}$
is ergodic, which, as we observed in Remark~\ref{R:jointly_exchangeable}, is the same
as proving that the exchangeable random didendritic systems $\DD$ induces on
disjoint (finite) subsets of $\bN$ are independent (recall from Remark~\ref{R:jointly_exchangeable}
our comment about generalizing the notion of a didendritic system from the setting where the underlying set
is $\bN$ to the setting where the underlying set is an arbitrary countable set).

Recall that
$(T_1^{\tt_p}, \ldots, T_{m(\tt_p)}^{\tt_p})$ denotes the R\'emy bridge to $\tt_p$.
For any $\ell \in \bN$, $T_\ell^{\tt_p}$ converges in distribution to $T_\ell^\infty$ as $p \to \infty$.
We can build a labeled version $(\tilde T_1^{\tt_p}, \ldots, \tilde T_{m(\tt_p)}^{\tt_p})$
of $(T_1^{\tt_p}, \ldots, T_{m(\tt_p)}^{\tt_p})$ in much the same way that we built a labeled version
of an infinite R\'emy bridge: $\tilde T_{m(\tt_p)}^{\tt_p}$ consists of 
the binary tree $T_{m(\tt_p)}^{\tt_p} = \tt_p$ with its $m(t_p)+1$ leaves labeled uniformly at random with
$[m(t_p)+1]$ and the backward evolution of such a labeled finite R\'emy bridge is the same 
as that of the labeled infinite R\'emy bridge.  It is clear that 
$\tilde T_\ell^{\tt_p}$ converges in distribution to $\tilde T_\ell^\infty$ as $p \to \infty$
for all $\ell \in \bN$: indeed,  $\tilde T_\ell^{\tt_p}$ and $\tilde T_\ell^\infty$
are just $T_\ell^{\tt_p}$ and $T_\ell^\infty$, respectively, equipped with uniform random labelings
of their $\ell+1$ leaves by $[\ell+1]$.

Suppose that $\ell \le m(\tt_p)$.
The labeled binary tree $\tilde T_\ell^{\tt_p}$ 
(respectively, $\tilde T_{m(\tt_p)}^{\tt_p}$) 
can be coded bijectively by an
exchangeable random didendritic system $\DD_{\ell,p}$ 
(respectively, $\DD_p$) on the finite set $[\ell+1]$ (respectively, $[m(\tt_p)+1]$),
and $\DD_{\ell,p}$ is the didendritic system on $[\ell+1]$ induced by $\DD_p$.
The labeled tree $\tilde T_\ell^\infty$ can be coded bijectively by
an exchangeable random didendritic system $\DD_{\ell,\infty}$ on the finite set $[\ell+1]$,
and $\DD_{\ell,\infty}$ is the didendritic system on $[\ell+1]$ induced by $\DD$.
It follows from the convergence in distribution of
$\tilde T_\ell^{\tt_p}$ to $\tilde T_\ell^\infty$ as $p \to \infty$ for all $\ell \in \bN$
that $\DD_{\ell,p}$ converges in distribution to $\DD_{\ell,\infty}$
as $p \to \infty$ for all $\ell \in \bN$.

Let $\cI$ denote the set of twelve possible isomorphism types for a labeled binary
tree with three leaves.  We know from Lemma~\ref{L:puzzling} that
$\DD$ can be coded bijectively by a jointly exchangeable random array $\ZZ_\infty$, say,
indexed by $\{(i,j,k) : i,j,k \in \bN, \, \text{$i,j,k$ distinct}\}$
with values in $\cI$.  Similarly,  $\DD_{\ell,p}$, $\DD_p$, and
$\DD_{\ell,\infty}$ can be coded bijectively by arrays that we denote
by $\ZZ_{\ell,p}$, $\ZZ_p$, and $\ZZ_{\ell, \infty}$.  
The array $\ZZ_{\ell,p}$ (respectively, $\ZZ_{\ell,\infty}$)
is just the subarray of $\ZZ_p$ (respectively, $\ZZ_\infty$) consisting of the entries indexed by
$\{(i,j,k) : i,j,k \in [\ell+1], \, \text{$i,j,k$ distinct}\}$.  It follows from the
convergence of $\DD_{\ell,p}$ in distribution to $\DD_{\ell,\infty}$ that
$\ZZ_{\ell,p}$ converges in distribution to $\ZZ_{\ell,\infty}$
as $p \to \infty$ for all $\ell \in \bN$.

Suppose that $H_1, \ldots, H_s$ are disjoint finite subsets of $\bN$.  We need to show that
the exchangeable random didendritic systems that $\DD$ induces on these sets are independent.
This is equivalent to establishing that the subarrays of $\ZZ_\infty$ consisting of
entries indexed by $\{(i,j,k) : i,j,k \in H_r, \, \text{$i,j,k$ distinct}\}$, $1 \le r \le s$,
are independent.  Taking $\ell$ so that $H_1 \sqcup \cdots \sqcup H_s \subseteq [\ell+1]$, this
is the same as proving that the subarrays of $\ZZ_{\ell,\infty}$ consisting
of entries indexed by these same sets of triples are independent.

We can build the array $\ZZ_{\ell,p}$ using the binary tree $\tt_p$
and random variables $\xi_1, \ldots, \xi_{\ell+1}$ that form a sequence of uniform random draws 
without replacement from the leaves of $\tt_p$: the $(i,j,k)$ entry of the array is
the isomorphism type of the subtree of $\tt_p$ spanned by the leaves $\xi_i, \xi_j, \xi_k$.  Let $\dag$
be an element not in $\cI$, take $\zeta_1, \ldots, \zeta_{\ell+1}$ to be independent uniform random draws
(with replacement) from the leaves of $\tt_p$, and define an array $\ZZ_{\ell,p}^\dag$ with the same
index set as $\ZZ_{\ell,p}$ but with values in $\cI \sqcup \{\dag\}$
by letting the $(i,j,k)$ entry of the array be
the isomorphism type of the subtree of $\tt_p$ spanned by the leaves $\zeta_i, \zeta_j, \zeta_k$
if $\zeta_i, \zeta_j, \zeta_k$ are distinct and $\dag$ otherwise.  A familiar coupling argument
shows that it is possible to construct $\xi_1, \ldots, \xi_{\ell+1}$ and $\zeta_1, \ldots, \zeta_{\ell+1}$
on the same probability space in such a way that 
$\bP\{\exists  1 \le i \le \ell +1 : \xi_i \ne \zeta_i\}$ depends on $\tt_p$ only through $m(\tt_p)$
and converges to zero as $m(\tt_p) \to \infty$;
more specifically, we first
construct $\zeta_1, \ldots, \zeta_{\ell+1}$, set 
$(\xi_1, \ldots, \xi_{\ell+1}) = (\zeta_1, \ldots, \zeta_{\ell+1})$
if $\zeta_1, \ldots, \zeta_\ell$ are distinct and let 
$(\xi_1, \ldots, \xi_\ell)$ be some other independent
sequence of uniform draws without replacement from the leaves of $\tt_p$ otherwise.
Thus, $\bP\{\ZZ_{\ell,p} \ne \ZZ_{\ell,p}^\dag\}$
depends on $t_p$ only through $m(t_p)$ and converges to zero as $m(t_p) \to \infty$.
The subarrays of $\ZZ_{\ell,p}^\dag$ consisting of
entries indexed by $\{(i,j,k) : i,j,k \in H_r, \, \text{$i,j,k$ distinct}\}$, $1 \le r \le s$,
are obviously independent because they are built from the binary tree $\tt_p$ and the disjoint collections of
random variables $\{\zeta_i : i \in H_r\}$, $1 \le r \le s$.

Combining the convergence in distribution of $\ZZ_{\ell,p}$ to $\ZZ_{\ell, \infty}$ as $p \to \infty$,
the convergence to zero as $p \to \infty$ of the total variation distance between the distribution of 
$\ZZ_{\ell,p}$ and the distribution of $\ZZ_{\ell,p}^\dag$, and the observation that the subarrays of
$\ZZ_{\ell,p}^\dag$ consisting of entries indexed by $\{(i,j,k) : i,j,k \in H_r, \, \text{$i,j,k$ distinct}\}$, 
$1 \le r \le s$, are independent, it is clear that the subarrays of $\ZZ_{\ell,\infty}$ 
indexed by these same sets of triples are independent, as required.
\end{proof}

\section{A real tree associated with an extremal infinite R\'emy bridge}
\label{S:R_tree}

With Corollary~\ref{C:all_minimal} in hand, the task of
identifying the Doob--Martin boundary of the R\'emy chain
reduces to characterizing the extremal infinite R\'emy bridges,
where we stress that such a characterization will also determine
the topological structure of the boundary because convergence of
boundary points is equivalent to convergence of finite-dimensional
distributions of the corresponding infinite R\'emy bridges.

The construction of Section~\ref{S:N_tree} used the labeled version
of an infinite R\'emy bridge (equivalently, an exchangeable random didendritic system)
to provide an embedding of $\bN$ as the leaves of a tree-like
combinatorial object whose vertices correspond to
equivalence classes of the didendritic system's equivalence relation.  
In this section we embed this tree-like
object into an $\bR$-tree by constructing a metric on the
set of equivalence classes.  We assume throughout this section that
$(\equiv, \, \langle \cdot, \cdot \rangle, \, <_L, \, <_R)$
is an ergodic exchangeable random didendritic system
and that $(\tilde T_n^\infty)_{n \in \bN}$ is the labeled version
of the associated extremal infinite R\'emy bridge.

Consider $i,j \in \bN$.  
For $p \in \bN$ set
\begin{equation}
\label{E:Ip_definition}
I_p := \ind\{ \langle i,j \rangle \le p\} 
\end{equation}
(recall our convention of writing $p$
for the equivalence class $\langle p,p \rangle$). 

By construction, the sequence of random
variables $(I_p)_{p > i \vee j}$ is exchangeable.  
Hence, by de Finetti's theorem and the strong law of large numbers,
\begin{equation}
\label{E:distance_definition}
d(i, j)
:=
\lim_{n \to \infty} \frac{1}{n} 
\sum_{p=1}^n I_p
\end{equation}
exists almost surely.

\begin{lemma}
\label{L:genuine_metric}
Almost surely, 
$d$ is a ultrametric on $\bN$.
That is, almost surely the following hold.
\begin{itemize}
\item
For all $i,j \in \bN$, $d(i,j) \ge 0$, and $d(i,j)=0$
if and only if $i=j$.
\item
For all $i,j \in \bN$, $d(i,j) = d(j,i)$.
\item
For all $i,j,k \in \bN$,
$d(i,k) \le d(i,j) \vee d(j,k)$.
\end{itemize}  
{\em A fortiori}, $d$ is almost surely a metric on $\bN$.
\end{lemma}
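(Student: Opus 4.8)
The plan is to dispose of the three routine clauses first, then the ultrametric inequality, and finally to concentrate on the strict positivity $d(i,j)>0$ for $i\neq j$, which is the only clause with real content.

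First I would record the easy clauses. Nonnegativity is immediate since each $I_p$ takes values in $\{0,1\}$, so $d(i,j)=\lim_n n^{-1}\sum_{p\le n}I_p\in[0,1]$. For symmetry, the first item of Remark~\ref{R:didendritic_properties} gives $(i,j)\equiv(j,i)$, hence $\langle i,j\rangle=\langle j,i\rangle$, so the sequences defining $d(i,j)$ and $d(j,i)$ literally coincide. For $d(i,i)=0$: here $\langle i,i\rangle$ is the leaf $i$, which is maximal for the partial order $<$ of Definition~\ref{D:partialorder} (nothing lies below a leaf), so $\langle i,i\rangle\le\langle p,p\rangle$ forces $\langle i,i\rangle=\langle p,p\rangle$, i.e.\ $p=i$; thus $I_p=\ind\{p=i\}$ and the Ces\`aro average tends to $0$.

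Next, the ultrametric inequality. Fix distinct $i,j,k$. By Remark~\ref{R:didendritic_to_bridge} (equivalently Lemma~\ref{L:puzzling}) the leaves $i,j,k$ span a $3$-leaf binary tree, so exactly one of the three pairs is the cherry; say it is $\{i,j\}$ (the other two cases are symmetric), which means $\langle i,k\rangle=\langle j,k\rangle$ and $\langle i,k\rangle<\langle i,j\rangle$. By transitivity of $<$, if $\langle i,j\rangle\le p$ then $\langle i,k\rangle\le p$, so, writing $I_p^{u,v}:=\ind\{\langle u,v\rangle\le p\}$, we have $I_p^{i,j}\le I_p^{i,k}=I_p^{j,k}$ for every $p$; letting $n\to\infty$ in the Ces\`aro averages yields $d(i,j)\le d(i,k)=d(j,k)$. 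In each of the three cases two of the three pairwise distances are equal and bound the third from above, which is exactly the ultrametric inequality $d(i,k)\le d(i,j)\vee d(j,k)$ (and hence, a fortiori, the triangle inequality).

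The hard part is showing $d(i,j)>0$ for $i\neq j$. I would call a two-element set $\{i,j\}$ a \emph{cherry} if $\langle i,j\rangle\le p$ fails for every $p\in\bN\setminus\{i,j\}$; a cherry plainly has $d(i,j)=0$. Conversely, for the pair $\{1,2\}$ the sequence $(I_p)_{p>2}$ is exchangeable with Ces\`aro average $d(1,2)$, so by de Finetti's theorem the event $\{d(1,2)=0\}$ coincides $\bP$-a.s.\ with $\{I_p=0\text{ for all }p>2\}$; as there is no index strictly between $1$ and $2$, this is precisely the event that $\{1,2\}$ is a cherry, whence $\bP\{d(1,2)=0\}=\bP\{\{1,2\}\text{ is a cherry}\}=:q$. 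Applying a permutation of $\bN$ that transposes $\{1,2\}$ with a given pair $\{i,j\}$ and using exchangeability of the didendritic system gives $\bP\{\{i,j\}\text{ is a cherry}\}=q$ for every pair. Two distinct cherries are disjoint: if $\{i,j\}$ and $\{i,k\}$ were cherries with $j\neq k$, then in the $3$-leaf tree spanned by $i,j,k$ one of $j,k$ would lie below the most recent common ancestor of the other with $i$, contradicting a cherry condition. Hence the number of cherries contained in $[2M]$ is at most $M$ a.s., so taking expectations $q\binom{2M}{2}\le M$, i.e.\ $q(2M-1)\le 1$ for all $M$, forcing $q=0$. Thus $\bP\{d(i,j)=0\}=q=0$ for every $i\neq j$, and a countable union over pairs gives $d(i,j)>0$ for all $i\neq j$ almost surely; with $d(i,i)=0$ this is the asserted equivalence. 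Since each individual assertion holds a.s.\ for a fixed tuple and there are countably many tuples, all hold simultaneously on one event of full probability. I expect the disjoint-cherry counting to be the only genuinely delicate point, and it is worth noting that it uses exchangeability only, not ergodicity, of the didendritic system.
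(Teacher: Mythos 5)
Your proof is correct and follows essentially the same route as the paper: de Finetti reduces $\{d(i,j)=0\}$ to the event that $\{i,j\}$ is a cherry, and a counting argument (cherries are pairwise disjoint, hence at most linearly many among quadratically many pairs) shows that event is null; the paper organizes the count via the uniform random labeling of $\tilde T_n^\infty$ rather than via your expectation bound over pairs in $[2M]$, but the idea is identical. Your treatment of the ultrametric inequality via the cherry of the three-leaf spanned subtree is also the paper's argument.
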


\begin{proof}
We first show for fixed distinct $i,j \in \bN$
that $d(i,j) > 0$ almost surely.
By exchangeability, de Finetti's theorem 
and the strong law of large numbers, 
the event $\{d(i,j) = 0\}$
coincides almost surely with the event
$\{I_p = 0 \; \forall p \notin \{i,j\}\}
= \{\nexists p \notin \{i,j\} : \langle i,j \rangle \le p\}$. 
For $i,j \in [n+1]$  the event
$\{I_p = 0 \; \forall p \in [n+1], \, p \notin \{i,j\}\}
=
\{\nexists p \in [n+1] \setminus \{i,j\}  
: \langle i,j \rangle \le p\}$
is the event that in the representation of $\tilde T_n^\infty$ 
as a subset of
$\{0,1\}^*$ labeled by $[n+1]$,
there is an interior vertex $u_1 \ldots u_\ell$ such that
$i$ labels $u_1 \ldots u_\ell 0$ and $j$ labels $u_1 \ldots u_\ell 1$
or {\em vice versa} (that is, the two leaves of $\tilde T_n^\infty$ labeled
by $i$ and $j$ are siblings and form what is often called a ``cherry'').
Now, the number of cherries in $\tilde T_n^\infty$ is at most 
$\lfloor \frac{n+1}{2} \rfloor$, and so the probability that $i$
and $j$ label the leaves of the same cherry is at most
$2 \lfloor \frac{n+1}{2} \rfloor \frac{1}{n+1}\frac{1}{n}$.  Thus,
\[
\bP\{d(i,j) = 0\}
= \lim_{n \to \infty} \bP\{I_p = 0 \; \forall p \in [n+1], \, p \notin \{i,j\}\} 
= 0.
\]

It is clear that almost surely $d(i,i)=0$ and $d(i,j) = d(j,i)$.

Lastly, for $i,j,k \in \bN$ we have that 
$\langle i, j \rangle 
= \langle j, k \rangle \le \langle k, i \rangle$
or one of the two other similar inequalities obtained by
cyclically permuting $i,j,k$ holds.  Therefore,
$d(k,i) \le d(i,j) = d(j,k)$ almost surely
or one of the two other similar inequalities obtained by
cyclically permuting $i,j,k$ holds.
\end{proof}

For $t \in \bR_+$ define an equivalence relation 
$\sim_t$ on $\bN$ by declaring that $i \sim_t j$
if and only if $d(i,j) \le t$.  Note that we can
identify $\bN$ with the equivalence classes of
$\sim_0$.  We now extend the metric $d$
to a metric on the set $\UU^o$ of pairs of the form $(B,t)$, where
$t \in \bR_+$ and $B$ is an equivalence class of $\sim_t$.
Given an equivalence class $A$ 
of $\sim_s$ and  an equivalence class $B$ of $\sim_t$,
set
\[
H((A,s),(B,t)) := 
\inf\{u \ge s \vee t : k \sim_u \ell \; \forall k \in A, \ell \in B\}
\]
and
\[
d((A,s),(B,t)) :=
\frac{1}{2}([H((A,s),(B,t)) - s] + [H((A,s),(B,t)) - t]).
\]
For $i,j \in \bN$ we have $H((\{i\},0), (\{j\},0)) = d(i,j)$
and so $d((\{i\},0), (\{j\},0)) = d(i,j)$, confirming that 
we have an extension of the original definition of $d$.
It is straightforward to check that this extension of
$d$ is a metric on $\UU^o$
that satisfies the {\em four-point condition};
that is, for $4$ elements $w,x,y,z \in \UU^0$ 
at least one of the following
conditions holds
\begin{itemize}
\item
$d(w,x) + d(y,z) \le d(w,y) + d(x,z) = d(w,z) + d(x,y)$,
\item
$d(w,z) + d(x,y) \le d(w,x) + d(y,z) = d(w,y) + d(x,z)$,
\item
$d(w,y) + d(x,z) \le d(w,z) + d(x,y) = d(w,x) + d(y,z)$.
\end{itemize}
It is, moreover, not difficult to show that the metric space
$(\UU^o,d)$ is connected and hence it is an $\bR$-tree 
(see \cite[Example~3.41]{MR2351587} for more details).
The completion $(\UU, d)$ of $(\UU^o,d)$ is
also an $\bR$-tree that is complete and separable.

There is a natural partial
order on the $\bR$-tree $(\UU^o, d)$
defined by the requirement that the pair
$(A,s)$ precedes the pair $(B,t)$ if
$A \supseteq B$ and $s > t$. 
If we consider the subtree of $(\UU, d)$
(equivalently, of $(\UU^o, d)$)
spanned by the set $\{(\{i\},0) : i \in [n+1]\}$, then
combinatorially we have a leaf--labeled tree.
The vertices of this combinatorial tree correspond to
pairs of the form $(B_{ij},d(i,j))$, $i,j\in [n+1]$, where 
$B_{ij}$ is the equivalence class 
$\{k \in \bN : d(i,k) \le d(i,j)\} 
= \{k \in \bN : d(j,k) \le d(i,j)\}$. 
Moreover, the combinatorial tree inherits the partial
order from $(\UU^o, d)$ and the vertex $(B_{ij},d(i,j))$
is the most recent common ancestor of the leaves
$(\{i\},0)$ and $(\{j\},0)$ in this partial order.

We claim that this leaf--labeled tree with its partial order
is isomorphic to  $\tilde T_n^\infty$, with the
vertex $(B_{ij},d(i,j))$ corresponding to the vertex $[i,j]_n$ and,
in particular, the leaf $(\{i\},0)$ corresponding to the leaf $i$.
This is equivalent to showing the following.

\begin{lemma}
For distinct 
$i,j,k \in [n+1]$,
$[i,k]_n = [j,k]_n < [i,j]_n$ if and only if 
$d(i,k) = d(j,k) > d(i,j)$.
\end{lemma}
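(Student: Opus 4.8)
The plan is to prove the two implications separately, translating both sides into statements about the positions of vertices in the trees $T_n^\infty$. The key preliminary observation is that, by the definitions of $d$ and of the partial orders, for distinct $i,j$ one has $I_p = \ind\{\langle i,j\rangle \le p\} = \ind\{\text{the leaf labelled }p\text{ lies strictly below }[i,j]_n\}$ whenever $\{i,j,p\}\subseteq[n+1]$, so that $d(i,j)$ is the asymptotic proportion of leaves of $T_n^\infty$ lying strictly below the most recent common ancestor of the leaves labelled $i$ and $j$. Consequently $d(i,j)$ depends on $i,j$ only through the vertex $[i,j]_n$, and if $[i,j]_n$ is an ancestor of $[i',j']_n$ then every leaf below $[i',j']_n$ lies below $[i,j]_n$, so that $d(i,j)\ge d(i',j')$.

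For the \emph{if} direction, assume $d(i,k)=d(j,k)>d(i,j)$. The subtree of $T_n^\infty$ spanned by the three leaves $i,j,k$ has exactly two internal vertices, one of which, say $z$, is a strict ancestor of the other, say $z'$; moreover $z$ is the common ancestor of two of the three pairs among $i,j,k$ while $z'$ is the common ancestor of the remaining pair. By the monotonicity recorded above, the pair whose common ancestor is $z'$ has the smallest $d$-value of the three. Since $d(i,j)$ is strictly smallest among $d(i,j),d(i,k),d(j,k)$, that pair must be $\{i,j\}$; hence $[i,j]_n=z'$ and $[i,k]_n=[j,k]_n=z$, and since $z$ is a strict ancestor of $z'$ this is exactly $[i,k]_n=[j,k]_n<[i,j]_n$.

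For the \emph{only if} direction, assume $[i,k]_n=[j,k]_n<[i,j]_n$ and write $w:=[i,k]_n=[j,k]_n$ and $v:=[i,j]_n$. Since $w$ is literally the same vertex viewed as $[i,k]_n$ and as $[j,k]_n$, we get $d(i,k)=d(j,k)$; and since $w$ is a strict ancestor of $v$, monotonicity gives $d(i,k)\ge d(i,j)$. The only substantive point is the strict inequality. Let $S$ be the set of $p\in\bN$ for which the leaf labelled $p$ lies below $w$ but not below $v$; then $S$ is defined consistently for all $n$, it contains $k$ but neither $i$ nor $j$, and $d(i,k)-d(i,j)=\lim_n \frac1n\#(S\cap\{1,\ldots,n\})$. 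The sequence $(\ind\{p\in S\})_{p\in\bN\setminus\{i,j,k\}}$ is exchangeable, so by de Finetti's theorem and the strong law of large numbers the event $\{d(i,k)=d(i,j)\}$ coincides, up to a null set, with $\{\ind\{p\in S\}=0\text{ for all }p\notin\{i,j,k\}\}=\{S=\{k\}\}$. On $\{S=\{k\}\}$, in every $T_n^\infty$ the vertex $w$ has the leaf labelled $k$ as one child and the internal vertex $v$ as the other; in particular the leaf labelled $k$ and the most recent common ancestor of the leaves labelled $i$ and $j$ are siblings in $T_n^\infty$ for every $n$.

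It therefore suffices to show that $\bP\{\text{leaf }k\text{ and the m.r.c.a.\ of the leaves }i,j\text{ are siblings in }T_n^\infty\}\to0$, and here I would argue exactly as in the cherry estimate in the proof of Lemma~\ref{L:genuine_metric}, using uniformity of the labelling: conditioning on the unlabelled tree $T_n^\infty$, the probability that the label $k$ falls on a fixed leaf $\lambda$ with internal sibling $u$ is $\tfrac1{n+1}$, and, given this, the probability that the m.r.c.a.\ of the labels $i$ and $j$ is $u$ is $\tfrac{2a_ub_u}{n(n-1)}$, where $a_u,b_u$ are the numbers of leaves in the two subtrees below $u$; summing over $\lambda$ and using that each internal vertex is the sibling of at most one leaf together with $\sum_u 2a_ub_u=n(n+1)$ (each pair of leaves being counted at its m.r.c.a.) bounds the probability by $\tfrac1{n-1}$. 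Letting $n\to\infty$ gives $\bP(\{d(i,k)=d(i,j)\}\cap\{[i,k]_n=[j,k]_n<[i,j]_n\})=0$, hence the strict inequality, completing the proof. The step I expect to require genuine care is precisely this last counting bound and the de Finetti reduction preceding it; everything else is bookkeeping about the combinatorics of binary trees and the induced metric.
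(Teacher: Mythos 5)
Your proof is correct and follows essentially the same route as the paper's: the easy comparisons come from the monotonicity of $d$ along the ancestor partial order, and the strict inequality is obtained by using exchangeability and de Finetti to reduce $\{d(i,k)=d(i,j)\}$ to the event that the leaf labelled $k$ is the sibling of the most recent common ancestor of the leaves labelled $i$ and $j$ in every $T_n^\infty$, whose probability is then shown to vanish using the uniformity of the labelling. The only differences are cosmetic: the paper first reduces the biconditional to the single equivalence $[j,k]_n<[i,j]_n \Leftrightarrow d(j,k)>d(i,j)$ and gets the counting bound more directly by conditioning on the locations of $i$ and $j$ (giving $\tfrac{1}{m-1}$ or $0$), whereas you condition on $k$ first and sum over leaves with internal siblings.
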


\begin{proof}
It suffices to show that $[i,k]_n = [j,k]_n$ if and only if
$d(i,k) = d(j,k)$ and $[j,k]_n < [i,j]_n$ if and only if
$d(j,k) > d(i,j)$.  Note that $[i,k]_n = [j,k]_n$
if and only if it is not the case that 
$[i,k]_n < [j,k]_n$ or $[i,k]_n > [j,k]_n$.  Similarly,
$d(i,k) = d(j,k)$ if and only if it is not the case that
$d(i,k) > d(j,k)$ or $d(i,k) < d(j,k)$.  It will thus further
suffice to show for distinct $i,j,k \in [n+1]$ that
$[j,k]_n < [i,j]_n$ if and only if
$d(j,k) > d(i,j)$.

It is clear that if $d(j,k) > d(i,j)$, then
$\langle j,k \rangle < \langle i,j \rangle$ and hence
$[j,k]_n < [i,j]_n$.  For the reverse implication, we certainly
have that $[j,k]_n < [i,j]_n$ 
(and hence $\langle j,k \rangle < \langle i,j \rangle$)
implies that $d(j,k) \ge d(i,j)$, and thus we only need to
rule out the possibility of equality.

By exchangeability, de Finetti's theorem 
and the strong law of large numbers, 
the event $\{\langle j,k \rangle < \langle i,j \rangle
, \, d(j,k) = d(i,j)\}$ coincides almost surely with
the event 
\[
\{\langle j,k \rangle < \langle i,j \rangle\}
\cap \{\nexists p \in \bN \setminus \{k\} : 
\langle j,k \rangle \le p, \; \langle i,j \rangle \not \le p\}.
\]
In order to show that the probability of the latter event is zero,
it suffices to show that for $m \ge n$ the probability of the event
\[
\{[j,k]_m < [i,j]_m\}
\cap \{\nexists p \in [m+1] \setminus \{k\} : 
[j,k]_m \le p, \; [i,j]_m  \not \le p\}
\]
converges to zero as $m \to \infty$.  In words, the last event
occurs when the sibling of the most recent common ancestor
in $\tilde T_m^\infty$ of the leaves labeled $i$ and $j$
is a leaf and that leaf is labeled by $k$.  If we condition
on $T_m^\infty$ and the locations of the leaves labeled
$i$ and $j$, then the conditional probability of the last event
is either $\frac{1}{m-1}$ or $0$, depending on whether the sibling of
the most recent common ancestor of the leaves labeled $i$ and $j$
is a leaf, and so the (unconditional) probability of the last event
certainly converges to zero as $m \to \infty$.
\end{proof}

Write $\TT^o$ for the subtree of $\UU^o$ (and hence of
$\UU$) spanned by the set $\{(\{i\},0) : i \in \bN \}$ 
and let $\TT$ be the closure of
$\TT^o$ in $\UU$.  We denote the restriction of
the metric $d$ to $\TT$ also by $d$.
From the above considerations we infer immediately the following.

\begin{proposition}
There is an injective mapping from the set of equivalence classes 
$\langle i, j \rangle$, $i,j \in \bN$, of the ergodic didendritic system
$(\equiv, \, \langle \cdot, \cdot \rangle, \, <_L, \, <_R)$
into the complete, separable $\bR$-tree $\TT$ constructed above
such that the distance $d(i,j)$ defined by 
\eqref{E:distance_definition} coincides with
the distance in $\TT$ between the images
of equivalence classes $\langle i,i \rangle$
and $\langle j,j \rangle$.
\end{proposition}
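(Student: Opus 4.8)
The plan is to write the map down explicitly and then read off all of its properties from the identification of finite spanned subtrees already recorded above. For $i,j\in\bN$ let $B_{ij}$ be the $\sim_{d(i,j)}$-equivalence class of $i$. Since $d$ is an ultrametric by Lemma~\ref{L:genuine_metric}, this class is the closed ball $\{k\in\bN : d(i,k)\le d(i,j)\}$, and because any point of a ball in an ultrametric space is a centre, it equals $\{k\in\bN : d(j,k)\le d(i,j)\}$ as well; hence $(B_{ij},d(i,j))$ is an element of $\UU^o$ and is symmetric in $i$ and $j$. The proposed map sends the equivalence class $\langle i,j\rangle$ to the pair $(B_{ij},d(i,j))$.

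First I would check that this is well defined, i.e.\ that $(B_{ij},d(i,j))$ depends only on $\langle i,j\rangle$ and not on the chosen representative $(i,j)$. The number $d(i,j)$ depends only on $\langle i,j\rangle$ directly from \eqref{E:distance_definition}, since the indicator $I_p=\ind\{\langle i,j\rangle\le p\}$ of \eqref{E:Ip_definition} is a function of the equivalence class alone. For $B_{ij}$, fix $n$ with $i,j\in[n+1]$; by the lemma just proved and the ensuing isomorphism between the subtree of $\UU^o$ spanned by $\{(\{k\},0):k\in[n+1]\}$ and $\tilde T_n^\infty$ (under which $(B_{ij},d(i,j))$ corresponds to the vertex $[i,j]_n$ and $(\{k\},0)$ to the leaf $k$), one has for $k\in[n+1]$ that $k\in B_{ij}$ if and only if $[i,j]_n\le[i,k]_n$, equivalently the leaf $k$ lies below the vertex $[i,j]_n$ in $\tilde T_n^\infty$. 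Thus $B_{ij}\cap[n+1]$ is the set of leaves of $\tilde T_n^\infty$ below $[i,j]_n$, which depends only on the vertex $[i,j]_n=\langle i,j\rangle$; letting $n\to\infty$ shows that $B_{ij}$ depends only on $\langle i,j\rangle$.

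Injectivity and membership in $\TT$ then fall out of the same identification. If $(B_{ij},d(i,j))=(B_{i'j'},d(i',j'))$, choose $n$ large enough that $i,j,i',j'\in[n+1]$; by the previous paragraph $[i,j]_n$ and $[i',j']_n$ are vertices of $\tilde T_n^\infty$ with the same set of descendant leaves in $[n+1]$, and in a binary tree, in which every interior vertex has two children, distinct vertices have distinct descendant-leaf sets, so $[i,j]_n=[i',j']_n$; as $n$ was arbitrary, $\langle i,j\rangle=\langle i',j'\rangle$. For membership in $\TT$ one uses that, as noted in the discussion preceding the statement, $(B_{ij},d(i,j))$ is the most recent common ancestor in $\UU^o$ of the two leaves $(\{i\},0)$ and $(\{j\},0)$, hence it belongs to the spanned subtree $\TT^o$ and a fortiori to $\TT$. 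Finally, the image of the leaf-type class $\langle i,i\rangle$ is $(B_{ii},0)=(\{i\},0)$, because $d(i,i)=0$ together with Lemma~\ref{L:genuine_metric} forces $B_{ii}=\{k:d(i,k)=0\}=\{i\}$, and the distance in $\TT$ between $(\{i\},0)$ and $(\{j\},0)$ equals $d((\{i\},0),(\{j\},0))=d(i,j)$ — precisely the identity verified when $d$ was extended from $\bN$ to $\UU^o$.

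I do not expect a genuine obstacle: the statement is essentially a bookkeeping consequence of the isomorphism between $\tilde T_n^\infty$ and the finite subtrees of $\UU^o$ established just above. The single point that requires a moment's care — and on which both well-definedness and injectivity rest — is the identity $B_{ij}\cap[n+1]=\{\text{leaves of }\tilde T_n^\infty\text{ below }[i,j]_n\}$, and once that is in place the rest is immediate.
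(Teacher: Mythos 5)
Your proposal is correct and follows essentially the same route as the paper: the paper's ``proof'' is precisely the preceding discussion establishing that the subtree of $\UU^o$ spanned by $\{(\{i\},0): i\in[n+1]\}$ is isomorphic as a partially ordered leaf--labeled tree to $\tilde T_n^\infty$ with $(B_{ij},d(i,j))$ corresponding to $[i,j]_n$, after which the proposition is declared immediate. You have simply written out the bookkeeping (well-definedness via the descendant-leaf-set identity, injectivity from distinctness of descendant-leaf sets in a binary tree, and the distance identity $d((\{i\},0),(\{j\},0))=d(i,j)$) that the paper leaves implicit.
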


From now on we will, with a slight abuse of notation,
think of the equivalence classes $\langle i,j \rangle$, $i,j \in \bN$, 
(including the leaves $i=\langle i,i \rangle$, $i \in \bN$)
as being elements of the $\bR$-tree $(\TT,d)$.

\begin{remark}
Consider two equivalence classes $\langle h,i \rangle$
and $\langle j,k \rangle$.  Recall that the most recent
common ancestor of  $\langle h,i \rangle$
and $\langle j,k \rangle$ is of the form $\langle \ell, m \rangle$,
where $\ell \in \{h,i\}$ and $m \in \{j,k\}$.  In terms
of the metric $d$, $\ell$ and $m$ are any such pair for which
$d(\ell, m) =  d(h,j) \vee d(h,k) \vee d(i,j) \vee d(i,k)$.
We therefore have
\[
\begin{split}
d(\langle h,i \rangle, \langle j,k \rangle)
& =
\frac{1}{2}([d(\ell,m) - d(h,i)] + [d(\ell,m) - d(j,k)]) \\
& =
d(h,j) \vee d(h,k) \vee d(i,j) \vee d(i,k)
- \frac{1}{2}(d(h,i) + d(j,k)). \\
\end{split}
\]
In particular, 
\[
d(i,\langle i,j\rangle) = \frac{1}{2} d(i,j),
\]
as we would expect.
\end{remark}

\begin{remark}
\label{R:root_of_T}
It follows from the construction of $\TT$ that 
$\max\{d(x,y) : x,y \in \TT\} \le 1$.
For $n \in \bN$, let $\rho_n$ be the
most recent common ancestor of $1,2, \ldots, n+1$
with respect to the partial order $<$. Note that
$\rho_n = \langle i, j \rangle \in \TT$
for distinct $i,j \in [n+1]$.  
The successive points $\rho_1, \rho_2, \ldots$ are linearly ordered along
a geodesic ray in $\TT$.  Because $\TT$ is a complete separable $\bR$-tree
with a finite diameter, it follows that
$(\rho_n)_{n \in \bN}$ is a Cauchy sequence and hence
convergent to a point $\rho \in \TT$.  We can, as with
any rooted $\bR$-tree, define a partial
order on $\TT$ by declaring that $x$ precedes
$y$ if and only if $x \ne y$ and $x$ belongs to the geodesic segment
$[\rho,y]$ between $\rho$ and $y$ 
(equivalently, $[\rho,x] \subsetneq [\rho,y]$). 
\end{remark} 

The following result is now immediate. 

\begin{proposition}
\label{P:partial_order_extends}
The partial order on $\TT$ defined by the root $\rho$
extends the partial order
$<$ on the equivalence classes $\{\langle i,j \rangle : i,j \in \bN\}$,
and the most recent common ancestor of $\langle h, i \rangle$ and
$\langle j, k \rangle$ is the equivalence class $\langle \ell, m \rangle$
such that 
$[\rho, \langle \ell, m \rangle] 
= [\rho, \langle h, i \rangle] \cap [\rho, \langle j,k \rangle]$.
\end{proposition}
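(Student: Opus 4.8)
The plan is to reduce the statement to finite level using the tree isomorphisms established above, and then apply a standard re-rooting argument for $\bR$-trees. For a point $r$ of an $\bR$-tree and distinct points $x,y$, write $x\prec_r y$ when $x$ lies on the geodesic $[r,y]$, and write $x\wedge_r y$ for the unique point $w$ with $[r,w]=[r,x]\cap[r,y]$ (the most recent common ancestor of $x$ and $y$ for the root $r$). Fix $n\in\bN$ and let $\TT_n$ be the subtree of $\TT$ spanned by the leaves $1,2,\ldots,n+1$. The identification established above of the leaf--labeled combinatorial tree underlying $\TT_n$ with $\tilde T_n^\infty$ (the vertex $(B_{ij},d(i,j))$, i.e.\ the equivalence class $\langle i,j\rangle$, corresponding to $[i,j]_n$), together with Remark~\ref{R:root_of_T} (which says $\rho_n$ is the most recent common ancestor of $1,\ldots,n+1$ for $<$, hence the root of that combinatorial tree), shows that $<$ of Definition~\ref{D:partialorder}, restricted to $\{\langle i,j\rangle:i,j\in[n+1]\}$, is exactly $\prec_{\rho_n}$ on $\TT_n$, and that the didendritic most recent common ancestor of two such equivalence classes equals their $\wedge_{\rho_n}$ in $\TT_n$.

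The core step is to show $[\rho,z]=[\rho,\rho_n]\cup[\rho_n,z]$ for every $z\in\TT_n$, i.e.\ that $\rho_n$ is the point of $\TT_n$ nearest to $\rho$. By Remark~\ref{R:root_of_T} the points $\rho_m$, $m\ge n$, lie in this order along a finite-length geodesic segment ending at $\rho$; for $m>n$ the class $\rho_m$ is a strict ancestor of $\rho_n$, so $\rho_m$ lies strictly between $\rho$ and $\rho_n$ on that segment. No point of $\TT_n$ can lie strictly between $\rho$ and $\rho_n$ on the segment, since $\TT_n$ is connected and contains $\rho_n$, so such a point would pull the whole portion of the segment between it and $\rho_n$ into $\TT_n$ — in particular some $\rho_m$ with $m>n$, which is impossible. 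Hence the geodesic from $\rho$ first reaches $\TT_n$ at $\rho_n$, and because $\TT_n$ is a subtree, $\rho_n$ is its gate and the displayed decomposition holds. (Alternatively, one verifies using the projectivity of the didendritic construction that $\rho_n$ remains the most recent common ancestor of $1,\ldots,n+1$ with respect to each later root $\rho_m$, $m\ge n$, and then lets $m\to\infty$, using continuity of geodesics under perturbation of the root.)

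The remainder is routine $\bR$-tree bookkeeping. For $x,y\in\TT_n$ the decomposition gives $[\rho,x]=[\rho,\rho_n]\cup[\rho_n,x]$, and likewise for $y$; since $[\rho,\rho_n]\cap\TT_n=\{\rho_n\}$ it follows that $x\prec_\rho y\Leftrightarrow x\prec_{\rho_n}y$, while $[\rho,x]\cap[\rho,y]=[\rho,\rho_n]\cup([\rho_n,x]\cap[\rho_n,y])$ gives $x\wedge_\rho y=x\wedge_{\rho_n}y$. Combining this with the first paragraph and letting $n$ range over $\bN$, we conclude that $\prec_\rho$ extends $<$ on $\{\langle i,j\rangle:i,j\in\bN\}$ and that $\langle h,i\rangle\wedge_\rho\langle j,k\rangle$ equals the didendritic most recent common ancestor $\langle\ell,m\rangle$ of $\langle h,i\rangle$ and $\langle j,k\rangle$; the asserted identity $[\rho,\langle\ell,m\rangle]=[\rho,\langle h,i\rangle]\cap[\rho,\langle j,k\rangle]$ is then just the definition of $\wedge_\rho$. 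The one genuinely delicate point is the gate claim of the second paragraph — that the limiting root $\rho$ lies below every $\rho_n$ — which is exactly where the linear ordering of the $\rho_m$ along the ray provided by Remark~\ref{R:root_of_T} is used.
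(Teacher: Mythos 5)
Your overall strategy is the right one and is exactly what the paper's ``is now immediate'' elides: identify the subtree $\TT_n$ spanned by the leaves $1,\dots,n+1$ with $\tilde T_n^\infty$ rooted at $\rho_n$, show that $\rho_n$ is the gate of $\TT_n$ as seen from $\rho$, and transfer the order and the ancestor map. The first and third paragraphs are fine. The problem is the justification of the gate claim itself. First, $\rho_m$ need not be a \emph{strict} ancestor of $\rho_n$ for $m>n$ (the new leaves may all attach below the current root, in which case $\rho_m=\rho_n$; if this happens for all $m$ then $\rho=\rho_n$ and the claim is vacuous, but you should say so). More seriously, the deduction ``such a point $w$ would pull the whole of $[w,\rho_n]$ into $\TT_n$ --- in particular some $\rho_m$ with $m>n$'' fails: the points $\rho_m$ sit at heights $t_m=\max\{d(i,j):i,j\in[m+1]\}$, and these can jump past the height of $w$ immediately (e.g.\ $t_n=\tfrac12$, $t_{n+1}=\tfrac9{10}$, $w$ at height $\tfrac6{10}$), so $[w,\rho_n]$ may contain no $\rho_m$ with $m>n$ at all. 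And even granting that some $\rho_m$ lands in $\TT_n$, the concluding ``which is impossible'' is unsubstantiated --- ruling it out is precisely the assertion that nothing in $\TT_n$ lies strictly above $\rho_n$, i.e.\ the gate claim you are trying to prove. So as written this step is circular where it is not simply false.

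The fix is direct and does not need the $\rho_m$ at all. Every point of $\TT_n$ lies on a geodesic between two leaves $i,j\in[n+1]$, and in the explicit model $(\UU^o,d)$ such a point has the form $([i]_s,s)$ or $([j]_s,s)$ with $0\le s\le d(i,j)\le t_n$, where $t_n$ is the height of $\rho_n=(B,t_n)$; by contrast every point of $[\rho_n,\rho]\setminus\{\rho_n\}$ has the form $(A,u)$ with $u>t_n$, and $d(\rho,(A',s))=\tfrac12(t_\infty-s)$ for any $(A',s)$ below $\rho$. Hence $\rho_n$ is the unique nearest point of $\TT_n$ to $\rho$ and the decomposition $[\rho,z]=[\rho,\rho_n]\cup[\rho_n,z]$ holds for $z\in\TT_n$. (Your parenthetical alternative --- that $\rho_n$ remains the most recent common ancestor of $1,\dots,n+1$ inside $\tilde T_m^\infty$ for every $m\ge n$, and one then lets $m\to\infty$ --- is also a workable route, but as stated it is only a sketch and would need the same kind of height bookkeeping to close.) With the gate step repaired, the rest of your argument goes through.
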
   

\begin{example}
\label{E:coin_tossing_tree}
Consider the infinite R\'emy bridge in Example~\ref{E:coin_tossing_bridge}.  
A concrete realization of the $\bR$-tree $(\TT,d)$ can be constructed as follows.  Let 
$(U_n)_{n \in \bN}$ be a sequence of independent random variables 
that each have the uniform distribution on $[0,1]$.  Take
the interval $[0,\frac{1}{2}]$ 
and build an $\bR$-tree by, for each $n \in \bN$,
attaching one end of a closed line segment
of length $\frac{1}{2} U_n$ to the point 
$\frac{1}{2} U_n \in [0,\frac{1}{2}]$ and labeling
the other end of the line segment with $n$.  The distance between
$i$ and $j$ in the resulting $\bR$-tree is then 
\[
\left|\frac{1}{2} U_i - \frac{1}{2} U_j\right| 
+ \frac{1}{2} U_i + \frac{1}{2} U_j
=
U_i \vee U_j.
\]
For $i \ne j$ we can identify $\langle i,j \rangle$ with 
$\frac{1}{2} (U_i \vee U_j) \in [0,\frac{1}{2}]$.
For $i \ne j$ and $k \ne \ell$
we have $\langle i,j \rangle < k$
if $U_i \vee U_j > U_k$ and 
$\langle i,j \rangle < \langle k, \ell \rangle$
if $U_i \vee U_j > U_k \vee U_\ell$.
Note that, as required, the distance between $i$ and $j$ is
\[
U_i \vee U_j 
= \lim_{n \to \infty} 
\frac{1}{n} \sum_{p=1}^n \ind\{U_i \vee U_j \ge U_p\}
=
\lim_{n \to \infty} 
\frac{1}{n} \sum_{p=1}^n \ind\{\langle i,j \rangle \le p\}.
\]
The root $\rho$ is the point $\frac{1}{2}$ in the interval $[0,\frac{1}{2}]$.
\end{example}

\begin{example}
\label{E:MBfulltree_real}
Consider the infinite R\'emy bridge in Example~\ref{E:MBfulltree}.
A concrete realization of the $\bR$-tree $(\TT,d)$ can be constructed as follows.
Take the complete binary tree $\{0,1\}^*$ and join two elements of the form
$v_1 \ldots v_k$ and $v_1 \ldots v_k v_{k+1}$ with a segment of length
$\frac{1}{2^{k+2}}$.  This gives an $\bR$-tree such that if $u_1 \ldots u_m$
and $v_1 \ldots v_n$ are elements of $\{0,1\}^*$ for which $p = \max\{j: u_j = v_j\}$, 
then the distance between the corresponding points
in the $\bR$-tree is 
\[
\left(\frac{1}{2^{p+2}} + \frac{1}{2^{p+3}} + \cdots + \frac{1}{2^{m+1}} \right)
+
\left(\frac{1}{2^{p+2}} + \frac{1}{2^{p+3}} + \cdots + \frac{1}{2^{n+1}} \right).
\]
We can identify $(\TT,d)$ with the completion of this $\bR$-tree.  There is
a bijective correspondence between $\{0,1\}^\infty$ and the points ``added'' in passing to the completion.
The distance between the points in the completion corresponding to $u_1 u_2 \ldots$ and $v_1 v_2 \ldots$
in $\{0,1\}^\infty$ with $p = \max\{j: u_j = v_j\}$ is 
\[
\left(\frac{1}{2^{p+2}} + \frac{1}{2^{p+3}} + \cdots \right)
+
\left(\frac{1}{2^{p+2}} + \frac{1}{2^{p+3}} + \cdots \right)
=
\frac{1}{2^p}.
\]
\end{example}

\section{The sampling measure on the real tree}
\label{S:probmeas}

Throughout this section, let $(\TT,d)$ be the $\bR$-tree
constructed in Section~\ref{S:R_tree} from 
an ergodic exchangeable random didendritic system
$\DD = (\equiv, \, \langle \cdot, \cdot \rangle, \, <_L, \, <_R)$
(equivalently, from the labeled version $(\tilde T_n^\infty)_{n \in \bN}$ 
of an extremal infinite R\'emy bridge $(T_n^\infty)_{n \in \bN}$).  
Recall from Proposition~\ref{P:partial_order_extends}
that we can extend the partial order $<$ to all of $\TT$.

\begin{definition}
Suppose that $\VV$ is a a complete separable $\bR$-tree with finite diameter.
A {\em leaf} of  $\VV$ is a point $x \in \VV$
such that there do not exist two points $y,z \in \VV$ for which
$x$ is in the interior of the segment between $y$ and $z$. The
$\bR$-tree $\VV$ is spanned by its set of leaves. 

An {\em isolated leaf} of a complete separable
$\bR$-tree $\VV$ is a leaf $x \in \VV$
such that for some $\epsilon$ the open ball of radius $\epsilon$
centered at $x$ is a half-open line segment with $x$ at the closed
end of the segment.  There is a maximal such $\epsilon$ and
we write $[x, \Pi(x))$ for the corresponding half-open line
segment.  For a leaf $x$ that is not isolated, we set $\Pi(x) := x$.

The {\em core} of $\VV$ is the subtree $\Gamma(\VV)$ spanned
by the set of points of the form $\Pi(x)$ as $x$ ranges over
the leaves of $\VV$.  It is not hard to show that
$\Gamma(\VV)$ is a closed $\bR$-tree and that $\Pi(x)$ is the unique point of
$\Gamma(\VV)$ that is closest to the leaf $x$ and so we think of
$\Pi(x)$ as the {\em point of attachment} of $x$ to the core.
Also, if for a leaf $x \in \VV$
we let $\VV^x$ be the closure of the subtree of $\VV$ spanned by the leaves
of $\VV$ other than $x$, then $\Gamma(\VV) = \bigcap_x \VV^x$.
\end{definition}

\begin{lemma}
\label{L:core_properties}
\begin{itemize}
\item[a)]
The core of $\TT$ is the closure of the subtree spanned by the set 
$\{\langle i,j \rangle : i,j \in \bN, \, i \ne j\}$.
\item[b)]
For all $i \in \bN$, 
\[
\begin{split}
d(i,\Pi(i)) 
& = \inf\{d(i,\langle i,j \rangle) : j \in \bN, \, j \ne i\} \\
& = \inf\{d(j,\langle i,j \rangle) : j \in \bN, \, j \ne i\} \\
& = \frac{1}{2} \inf\{d(i,j) : j \in \bN, \, j \ne i\} \\
\end{split}
\]
and if $(j_n)_{n \in \bN}$ is any sequence in $\bN \setminus \{i\}$
such that $\lim_{n \to \infty} d(i, \langle i,j_n \rangle) = d(i,\Pi(i))$,
then $\Pi(i) = \lim_{n \to \infty} \langle i,j_n \rangle$.
\item[c)]
For $i \in \bN$, $\Pi(i) \le i$.
\item[d)]
For $i,j \in \bN$ with $i \ne j$, $\Pi(i) \ne \Pi(j)$.
\item[e)]
For $i,j \in \bN$ with $i \ne j$, the most recent common ancestor of
$\Pi(i)$ and $\Pi(j)$ in the partial order that the core $\Gamma(\TT)$
inherits from $\TT$ is $\langle i,j \rangle$ 
and
\[
d(i,j) 
= 
\lim_{n \to \infty} \frac{1}{n} \sum_{p=1}^n
\ind\{\langle i , j \rangle \le \Pi(p)\}.
\]
\item[f)] Under our standing ergodicity assumption,
the isometry class of
$\Gamma(\TT)$ together with the partial order on $\Gamma(\TT)$
inherited from the partial order $<$ are both constant almost surely.
\end{itemize}
\end{lemma}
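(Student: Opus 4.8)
The plan is to dispose of parts (a)--(e) by elementary $\bR$-tree geometry built on the construction of $(\TT,d)$ in Section~\ref{S:R_tree}, and to reserve the real work for part (f), which is the only place the ergodicity hypothesis is used. I would prove the parts in the order (b), (c), (a), (e), (d), (f). For (b) and (c) the key inputs are that, by the definition of the core in Section~\ref{S:probmeas}, $\Pi(i)$ is the unique point of $\Gamma(\TT)$ nearest the leaf $i$, and the identity $d(i,\langle i,j\rangle)=d(j,\langle i,j\rangle)=\tfrac12 d(i,j)$ recorded at the end of Section~\ref{S:R_tree}. Each $\langle i,j\rangle$ with $j\ne i$ is a branch point lying on the geodesic $[\rho,i]$, so $\langle i,j\rangle\le i$ and, by (a), $\langle i,j\rangle$ is a core point; since $\Pi(i)$ is the nearest core point to $i$ it lies on $[\rho,i]$ between these branch points and $i$, which gives $\Pi(i)\le i$ (part (c)) and $d(i,\Pi(i))\le\inf_{j\ne i}d(i,\langle i,j\rangle)$. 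For the reverse inequality and the assertion that $\langle i,j_n\rangle\to\Pi(i)$ along any minimizing sequence, I would use that $\Pi(i)$ is a limit of points of the spanning set $\{\langle k,\ell\rangle:k\ne\ell\}$ (again part (a)) and project those onto $[\rho,i]$ by the nearest‑point (median) map, which produces points of the form $\langle i,k\rangle$ converging to $\Pi(i)$; the three displayed infima in (b) coincide by the identity above.

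For (a), I would identify, for each leaf $x$ of $\TT$, the subtree $\TT^x$ spanned by the remaining leaves, check that every branch point $\langle i,j\rangle$, $i\ne j$, lies in $\bigcap_x\TT^x$, and conversely use that $\Gamma(\TT)$ is spanned by the points $\Pi(x)$, each a limit of branch points by (b). Part (e) is then immediate: $\Pi(i)\in[\rho,i]$, $\Pi(j)\in[\rho,j]$, and $\langle i,j\rangle$ is the lowest common ancestor of $i$ and $j$, hence the most recent common ancestor of $\Pi(i)$ and $\Pi(j)$ in the core; the averaging formula follows from $\langle i,j\rangle\le\Pi(p)\iff\langle i,j\rangle\le p$ together with \eqref{E:distance_definition}. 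For (d), if $\Pi(i)=\Pi(j)$ then, using (c) and that $\langle i,j\rangle$ is a core point below both $i$ and $j$, one forces $\Pi(i)=\Pi(j)=\langle i,j\rangle$; by the formula for $d(i,\Pi(i))$ in (b) this forces $d(i,k)\ge d(i,j)$ and $d(j,k)\ge d(i,j)$ for all $k$, so that $i$ and $j$ are siblings forming a ``cherry'' in $\tilde T_n^\infty$ for every $n$ with $i,j\in[n+1]$. The probability of that event is at most a constant times $1/n$ — exactly the cherry‑counting estimate from the proof of Lemma~\ref{L:genuine_metric} — so $\bP\{\Pi(i)=\Pi(j)\}=0$, and a countable union over pairs $i\ne j$ completes (d).

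For part (f), the idea is that the isometry class of $\Gamma(\TT)$ together with its partial order is a measurable functional $\Phi(\DD)$ of the underlying didendritic system $\DD$, valued in the standard Borel space of isometry classes of complete separable $\bR$-trees of finite diameter equipped with a partial order, and that $\Phi$ is invariant under the finite‑permutation action on $\DD$. Indeed, for a permutation $\sigma$ fixing all but finitely many points of $\bN$, the relabeling $i\mapsto\sigma(i)$ extends — by the computation in Section~\ref{S:R_tree} showing that the distance in the tree built from $\DD^\sigma$ between $i$ and $j$ equals the distance in $\TT$ between $\sigma(i)$ and $\sigma(j)$ — to an order‑preserving isometry of the tree of $\DD^\sigma$ onto $\TT$, whose restriction to cores is an order‑preserving isometry onto $\Gamma(\TT)$; hence $\Phi(\DD^\sigma)=\Phi(\DD)$ $\bP$-a.s. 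Consequently, for every Borel $B$ in the target the event $\{\DD\in\Phi^{-1}(B)\}$ differs by a $\bP$-null set from $\{\DD^\sigma\in\Phi^{-1}(B)\}$ for every such $\sigma$; ergodicity of $\DD$ (Remark~\ref{R:jointly_exchangeable}) then forces $\bP\{\Phi(\DD)\in B\}\in\{0,1\}$, and since the target is standard Borel this means $\Phi(\DD)$ equals a constant $\bP$-almost surely, which is the assertion of (f).

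The main obstacle is the bookkeeping in (f): one must make precise that $\DD\mapsto\Phi(\DD)$ is measurable into a standard Borel space. I would do this by noting that $\Phi$ factors through the array $\big(d(\langle i,j\rangle,\langle k,\ell\rangle),\ \ind\{\langle i,j\rangle<\langle k,\ell\rangle\}\big)_{i\ne j,\ k\ne\ell}$, whose entries depend measurably on $\DD$ (each distance being an almost sure limit of indicator averages, as in \eqref{E:distance_definition}), and that isometry classes of such ``distance‑and‑order arrays'' form a standard Borel space. Alternatively one avoids an explicit moduli space: for each member $g$ of a countable separating family of bounded measurable functionals of the isometry class, $g(\Phi(\DD))$ is an almost surely finite‑permutation‑invariant random variable, so applying the event‑level ergodicity hypothesis to its sublevel sets shows it is $\bP$-a.s. constant, whence $\Phi(\DD)$ itself is. Everything else in (a)--(f) is routine tree geometry, and the estimate needed in (d) is already in hand from the proof of Lemma~\ref{L:genuine_metric}.
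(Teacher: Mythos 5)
Your proposal is correct and follows essentially the same route as the paper: parts (a), (b), (c), (e) are the routine $\bR$-tree geometry the paper leaves to the reader, your part (d) is the same cherry/positive-distance argument (the paper cites $d(i,j)>0$ a.s.\ to produce infinitely many $p$ below $\langle i,j\rangle$, you re-run the $O(1/n)$ cherry estimate directly — equivalent in substance), and part (f) is the identical finite-permutation-invariance plus ergodicity argument, for which you supply the measurability bookkeeping the paper glosses over. No gaps.
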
 

\begin{proof}
Parts (a), (b) and (c) are straightforward and are left to the reader.
For part (d) suppose that $\Pi(i) = \Pi(j)$ for $i \ne j$.
By part (c), $\Pi(i) = \Pi(j) \le \langle i,j \rangle$. Thus, 
by part (b), $\Pi(i) = \Pi(j) = \langle i,j \rangle$
and $d(i,\Pi(i)) = d(i,\Pi(j)) = d(j,\Pi(i)) = d(j,\Pi(j)) =
d(i,\langle i, j \rangle) =  d(j,\langle i, j \rangle) =
\frac{1}{2} d(i,j)$.  This is not possible unless $i$ and $j$ are
both isolated.  By the definition of $d(i,j)$, there are infinitely many
$p \in \bN \setminus \{i,j\}$ such that $\langle i,j \rangle \le p$.
For any such $p$ we must have either 
$\langle i,j \rangle < \langle i,p \rangle$ or
$\langle i,j \rangle < \langle j,p \rangle$, so that
$d(i, \langle i,p \rangle) < d(i, \langle i, j \rangle)$
or
$d(j, \langle j,p \rangle) < d(j, \langle i, j \rangle)$,
but this contradicts $\Pi(i) = \Pi(j) = \langle i,j \rangle$.

Part (e) is also clear and is left to the reader.  

For part (f), note first of all that if 
$\sigma$ is a permutation of $\bN$ such
that $\sigma(i) = i$ for all but finitely many $i \in \bN$ and 
$(\equiv^\sigma, \, \langle \cdot, \cdot \rangle^\sigma, \, <_L^\sigma, \, <_R^\sigma)$ is 
the random didendritic system defined in Definition~\ref{def5_8}, then
the the isometry class of $\Gamma(\TT)$ as a random complete separable metric space is unchanged if we replace
$(\equiv, \, \langle \cdot, \cdot \rangle, \, <_L, \, <_R)$
by
$(\equiv^\sigma, \, \langle \cdot, \cdot \rangle^\sigma, \, <_L^\sigma, \, <_R^\sigma)$.
Our standing ergodicity
assumption gives that the isometry class of $\Gamma(\TT)$ is constant almost surely.

The root $\rho$ defined in Remark~\ref{R:root_of_T}
is an element of $\Gamma(\TT)$.  It is clear that the location of $\rho$
is unchanged if we replace 
$\DD = (\equiv, \, \langle \cdot, \cdot \rangle, \, <_L, \, <_R)$
by
$\DD^\sigma = (\equiv^\sigma, \, \langle \cdot, \cdot \rangle^\sigma, \, <_L^\sigma, \, <_R^\sigma)$, 
and so the restriction of the random partial order $<$ to $\Gamma(\TT)$
is also constant.
\end{proof}

\begin{example}
\label{E:coin_tossing_core}
Consider the $\bR$-tree $\TT$ constructed in
Example~\ref{E:coin_tossing_tree} from the
infinite R\'emy bridge introduced in Example~\ref{E:coin_tossing_bridge}.
The core of $\TT$ is the interval $[0,\frac{1}{2}]$.
\end{example}

Consider the maps $\kappa_-: \bN \to \bN$ and
$\kappa_+: \bN \to \bN$ given
by $\kappa_-(n) = 2n-1$ and $\kappa_+(n) = 2n$, $n \in \bN$.
Define the exchangeable random didendritic systems 
$\DD_- = (\equiv_-, \langle \cdot, \cdot \rangle_-, <_{-,L}, <_{-,R})$
and
$\DD_+ = (\equiv_+, \langle \cdot, \cdot \rangle_+, <_{+,L}, <_{+,R})$
by
\[
\begin{split}
& (h,i) \equiv_- (j,k) \Longleftrightarrow (\kappa_-(h),\kappa_-(i)) \equiv (\kappa_-(j),\kappa_-(k)) \\
& \text{and} \\
& (h,i) \equiv_+ (j,k) \Longleftrightarrow (\kappa_+(h),\kappa_+(i)) \equiv (\kappa_+(j),\kappa_+(k)), \\
\end{split}
\]
\[
\begin{split}
& \text{$\langle i, j \rangle_-$ is the $\equiv_-$ equivalence class of $(i,j)$} \\
& \text{and} \\
& \text{$\langle i, j \rangle_+$ is the $\equiv_+$ equivalence class of $(i,j)$}, \\
\end{split}
\]
\[
\begin{split}
& \langle h,i \rangle_- <_{-,L} \langle j,k \rangle_- 
\Longleftrightarrow 
\langle \kappa_-(h),\kappa_-(i) \rangle <_L \langle \kappa_-(j),\kappa_-(k) \rangle \\
& \text{and} \\
& \langle h,i \rangle_+ <_{+,L} \langle j,k \rangle_+ 
\Longleftrightarrow 
\langle \kappa_+(h),\kappa_+(i) \rangle <_L \langle \kappa_+(j),\kappa_+(k) \rangle, \\
\end{split}
\]
and
\[
\begin{split}
& \langle h,i \rangle_- <_{-,R} \langle j,k \rangle_- 
\Longleftrightarrow 
\langle \kappa_-(h),\kappa_-(i) \rangle <_R \langle \kappa_-(j),\kappa_-(k) \rangle \\
& \text{and} \\
& \langle h,i \rangle_+ <_{+,R} \langle j,k \rangle_+ 
\Longleftrightarrow 
\langle \kappa_+(h),\kappa_+(i) \rangle <_R \langle \kappa_+(j),\kappa_+(k) \rangle. \\
\end{split}
\]
Define the partial orders $<_-$ and $<_+$ on 
$\{\langle i,j \rangle_+:  i,j \in \bN\}$, respectively,
by declaring that 
\[ 
\begin{split}
& \langle h, i \rangle_- <_- \langle j,k \rangle_- 
\Longleftrightarrow 
\langle \kappa_-(h) \kappa_-(i) \rangle < \langle \kappa_-(j), \kappa_-(k) \rangle \\
& \text{and} \\
& \langle h, i \rangle_+ <_- \langle j,k \rangle_+ 
\Longrightarrow 
\langle \kappa_+(h) \kappa_+(i) \rangle < \langle \kappa_+(j), \kappa_+(k) \rangle, \\
\end{split}
\]
or, equivalently,
\[ 
\begin{split}
& \langle h, i \rangle_- <_- \langle j,k \rangle_- 
\Longleftrightarrow
\langle h, i \rangle_- <_{-,L} \langle j,k \rangle_-
\; \text{or} \;
\langle h, i \rangle_- <_{-,R} \langle j,k \rangle_- \\ 
& \text{and} \\
& \langle h, i \rangle_+ <_+ \langle j,k \rangle_+ 
\Longleftrightarrow
\langle h, i \rangle_+ <_{+,L} \langle j,k \rangle_+ 
\; \text{or} \;
\langle h, i \rangle_+ <_{+,R} \langle j,k \rangle_+. \\
\end{split}
\]


By exchangeability, the random didendritic systems
$\DD_-$, $\DD_+$
and
$\DD$
have the same distribution. By the ergodicity of $\DD$,
the random didendritic systems $\DD_-$ and $\DD_+$ are independent.
Therefore,
 if we construct random partially ordered $\bR$-trees 
$(\TT^-, <_-)$ and $(\TT^+, <_+)$ from 
$\DD_-$
and
$\DD_+$
in the same manner
that $(\TT, <)$ was constructed from 
$\DD$,
then $(\TT^-, <_-)$ and $(\TT^+, <_+)$ are independent and each have
the same distribution as $(\TT, <)$. 
 
By de Finetti's theorem and the strong law of large numbers,
\[
\begin{split}
 \lim_{n \to \infty} \frac{1}{n} \sum_{p=1}^n \ind\{\langle i,j \rangle \le p\}
& =
\lim_{n \to \infty} \frac{1}{n} \sum_{p=1}^n \ind\{\langle i,j \rangle \le 2p-1\} \\
& = \lim_{n \to \infty} \frac{1}{n} \sum_{p=1}^n \ind\{\langle i,j \rangle \le 2p\} \\
\end{split}
\]
for any $i,j \in \bN$.  Therefore, the distance between
$k$ and $\ell$ in $\TT^-$
(resp. $\TT^+$)
is the same as the distance between
$\kappa_-(k)$ and $\kappa_-(\ell)$ 
(resp. $\kappa_+(k)$ and $\kappa_+(\ell)$)
in $\TT$,
and hence we may (and will) identify $\TT^-$ and $\TT^+$ with the closures
in $\TT$ of the respective sets $2 \bN - 1$ and $2 \bN$.

By part (f) of
Lemma~\ref{L:core_properties}, (the isometry class of) the core $\Gamma(\TT)$
is constant almost surely.
The set $\Gamma(\mathbf{T})$ is the closure of the subtree spanned by
the set of attachment points $\{\Pi(i) : i \in \mathbb{N}\}$ and,
by part (a) of
Lemma~\ref{L:core_properties},
also the closure of the subtree spanned by the set of points
$\{\langle j,k \rangle : j,k \in \mathbb{N}, \, j \ne k\}$.
It is clear that 
$\Gamma(\TT^-) \subseteq \Gamma(\TT)$
and
$\Gamma(\TT^+) \subseteq \Gamma(\TT)$.
It follows from Remark 11.2 and the
second proof of Theorem 3.9(i) in
\cite{MR3806409} that almost surely for any $\delta > 0$ and $i \in \mathbb{N}$,
there exists $j,k \in 2 \mathbb{N} - 1$
(resp. $j,k \in 2 \mathbb{N}$)
with $d(\Pi(i), \langle j,k \rangle) < \delta$ and hence
$\Gamma(\mathbf{T}^-) = \Gamma(\mathbf{T}^+) = \Gamma(\mathbf{T})$.

For the benefit of the reader, we sketch the
argument from \cite{MR3806409} in our notation.  Fix $\epsilon > 0$ and a
deterministic sequence $0 < h_1^{(\epsilon)}  < h_2^{(\epsilon)} < \ldots \uparrow \infty$
 such that $h_1^{(\epsilon)} < \epsilon$,  $h_{n+1}^{(\epsilon)} - h_n^{(\epsilon)} < \epsilon$,
and
$\mathbb{P}\{d(i, \Pi(i)) = h_n^{(\epsilon)}\} = 0$
for all $i,n \in \mathbb{N}$.  Set $h_0^{(\epsilon)} = 0$.
Define an exchangeable equivalence relation
$\sim^\epsilon$ on $\mathbb{N}$ by declaring that $i \sim^\epsilon j$,
$i,j \in \mathbb{N}$, $i \ne j$, if
$d(i, \Pi(i)), d(j, \Pi(j)), d(i,\langle i,j \rangle) = d(j,\langle i,j \rangle)
\in [h_{n-1}^{(\epsilon)},h_n^{(\epsilon)})$ for some $n \in \mathbb{N}$.  Note
that if $i \sim^\epsilon j$, $i,j \in \mathbb{N}$, $i \ne j$, then
\[
d(\Pi(i), \langle i,j \rangle) \vee d(\Pi(j), \langle i,j \rangle) < \epsilon.
\]
The crucial observation, stated and proved as (10.7) of \cite{MR3806409}, 
is that almost surely the
exchangeable equivalence relation $\sim^\epsilon$ does not have any singleton
equivalence classes.  It then follows from Kingman's paintbox construction
of exchangeable equivalence relations that almost surely for any $i \in \mathbb{N}$
there exists $j,k \in 2 \mathbb{N} - 1$ (respectively, $j,k \in 2 \mathbb{N}$)
with $i,j,k$ distinct and $i \sim^\epsilon j \sim^\epsilon k$, and hence
\[
d(\Pi(i), \langle j,k \rangle)
\le
d(\Pi(i), \langle i,j\rangle) 
+ d(\langle i,j\rangle, \Pi(j)) 
+ d(\Pi(j), \langle j,k \rangle)
< 3 \epsilon.
\]


Let $\Pi_-$ and $\Pi_+$ be the analogues
of $\Pi$ for $(\TT^-, <_-)$ and $(\TT^+, <_+)$.  
For $i \in \bN$ we have that $\Pi_+(i)$, the closest point
in $\Gamma(\TT^+)$ to $i$ (where we stress that $i$ labels an element of
$\TT^+$), is an element of $\Gamma(\TT^-) = \Gamma(\TT^+)$.  
It follows from the exchangeability
inherent in our construction that $(\Pi_+(i))_{i \in \bN}$ 
is an exchangeable
sequence of random elements of $\Gamma(\TT^-)$.  By our
standing ergodicity assumption and de Finetti's theorem, the random elements
in this sequence are independent and identically distributed,
and it is a consequence of part (d) of Lemma~\ref{L:core_properties} that
their common distribution is, prefiguring the notation in
the statement of Proposition~\ref{P:up_to_left_right} below, 
a diffuse probability measure $\mu$ on
the $\bR$-tree $\SSS := \Gamma(\TT)$ that is contained in $\TT$ and rooted in $\theta := \rho$.
The probability measure $\mu$ and the $\bR$-tree $\SSS$
are the objects addressed in this
section's title.  

For $i,j \in \bN$ with $i \ne j$, part (e) of
Lemma~\ref{L:core_properties} gives that
$\langle i,j \rangle_+$ is
the most recent common ancestor of
$\Pi_+(i)$ and $\Pi_+(j)$ in the partial order $<_+$.
Moreover, $\langle i,j \rangle_+ \le_+ \Pi_+(p)$
if and only if 
$[\rho,\Pi_+(i)] \cap [\rho,\Pi_+(j)]
\subseteq
[\rho,\Pi_+(i)] \cap [\rho,\Pi_+(p)]$
or
$[\rho,\Pi_+(i)] \cap [\rho,\Pi_+(j)]
\subseteq
[\rho,\Pi_+(j)] \cap [\rho,\Pi_+(p)]$,
where
$[\rho,x]$ is the geodesic segment between $\rho$ and
$x$ in $\Gamma(\TT)=\Gamma(\TT^-)=\Gamma(\TT^+)$, and so
if we write $d_+(i,j)$ for the distance between
$i,j \in \bN$, $i \ne j$, in $\TT^+$, we have from part (e)
of Lemma~\ref{L:core_properties} that
\[
\begin{split}
d_+(i,j)
=
\lim_{n \to \infty} 
\frac{1}{n} \sum_{p=1}^n 
\ind\Bigl(
& \{[\rho,\Pi_+(i)] \cap [\rho,\Pi_+(j)]
\subseteq
[\rho,\Pi_+(i)] \cap [\rho,\Pi_+(p)]\} \\
& \quad \cup
\{[\rho,\Pi_+(i)] \cap [\rho,\Pi_+(j)]
\subseteq
[\rho,\Pi_+(j)] \cap [\rho,\Pi_+(p)]\}
\Bigr). \\
\end{split}
\]

Because $(\TT^+, <_+)$ has the same distribution as
$(\TT, <)$, we have established the following result.

\begin{proposition}
\label{P:up_to_left_right}
Suppose that $\equiv$, $\langle \cdot, \cdot \rangle$
and $<$ are the equivalence relation on $\bN \times \bN$,
the equivalence classes and the partial order on
those equivalence classes arising from 
an ergodic exchangeable random didendritic system
(equivalently, from the labeled version of an
extremal infinite R\'emy bridge ).
There is a complete separable  $\bR$-tree $\SSS$,
a point $\theta \in \SSS$, and a diffuse probability measure
$\mu$ on $\SSS$ such that the following hold.
Let $\xi_1, \xi_2, \ldots$ be i.i.d. random elements of $\SSS$
with common distribution $\mu$.  Define a random equivalence relation
$\equiv_{\#}$ on $\bN \times \bN$ by declaring that 
$(i,i) \equiv_{\#} (k,\ell)$ if and only if $(i,i) = (k,\ell)$,
and 
$(i,j) \equiv_{\#} (k,\ell)$ for $i \ne j$ and $k \ne \ell$
if and only if
 $[\theta,\xi_i] \cap [\theta,\xi_j] = [\theta,\xi_k] \cap [\theta,\xi_\ell]$, where
$[\theta,x]$ is the geodesic segment between $\theta$ and $x$, in $\SSS$.  Denote the
equivalence class containing $(i,j) \in \bN \times \bN$ by 
$\langle i,j \rangle_{\#}$.  Define a partial order $<_{\#}$ on the
set of equivalence classes by declaring that
$\langle i,j \rangle_{\#} <_{\#} \langle i,i \rangle_{\#}$
for all $i \ne j$ and 
that $\langle i,j \rangle_{\#} <_{\#} \langle k,\ell \rangle_{\#}$
for $i \ne j$ and $k \ne \ell$ if 
$[\theta,\xi_i] \cap [\theta,\xi_j] 
\subsetneq
[\theta,\xi_k] \cap [\theta,\xi_\ell]$.
The object $(\equiv_{\#}, \langle \cdot, \cdot \rangle_{\#}, <_{\#})$ 
has the same
distribution as $(\equiv, \langle \cdot, \cdot \rangle, <)$.
\end{proposition}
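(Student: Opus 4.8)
The plan is to assemble the ingredients constructed in this section rather than to start from scratch: I would take $\SSS := \Gamma(\TT)$, $\theta := \rho$ (the root of Remark~\ref{R:root_of_T}), and $\mu$ to be the common law of the attachment points $\Pi(i)$, and then verify that these objects have the advertised property. The first block of work is the odd/even device. The maps $\kappa_-(n)=2n-1$, $\kappa_+(n)=2n$ split $\DD$ into didendritic systems $\DD_-,\DD_+$, each of which has the law of $\DD$ by exchangeability and which are independent by the ergodicity hypothesis (via the Aldous characterization recalled in Remark~\ref{R:jointly_exchangeable}). Building $(\TT^\pm,<_\pm)$ from $\DD_\pm$ in the same way $(\TT,<)$ was built from $\DD$, I would identify $\TT^\pm$ with the closures in $\TT$ of $2\bN-1$ and $2\bN$ and then prove the key identity $\Gamma(\TT^-)=\Gamma(\TT^+)=\Gamma(\TT)$. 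This is carried out by the argument after Example~\ref{E:coin_tossing_core}: its engine is the fact from \cite{Guf16} that the exchangeable equivalence relations $\sim^\epsilon$ have no singleton classes, whence Kingman's paintbox construction produces, arbitrarily close to any $\Pi(i)$, a most recent common ancestor $\langle j,k\rangle$ with $j,k$ in whichever half of $\bN$ one wishes. I expect this density statement --- that the core of $\TT$ is already spanned by the most recent common ancestors $\langle j,k\rangle$ with $j,k$ ranging over a single ``half'' of the leaves --- to be the main obstacle, and it is where the external input of \cite{Guf16} is indispensable.

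Granting $\Gamma(\TT^+)=\Gamma(\TT)=\SSS$, the next step is routine de Finetti/ergodicity bookkeeping. The projection $\Pi_+(i)$ of the leaf $i$ of $\TT^+$ onto $\Gamma(\TT^+)=\SSS$ is well defined, the sequence $(\Pi_+(i))_{i\in\bN}$ is exchangeable by the symmetry inherent in the construction, and by de Finetti together with ergodicity it is i.i.d.; part (f) of Lemma~\ref{L:core_properties} says $\SSS$ as a rooted $\bR$-tree is almost surely constant, so $\theta=\rho$ and the directing measure $\mu$ of $(\Pi_+(i))_{i\in\bN}$ are nonrandom, and $\mu$ is diffuse by part (d) of Lemma~\ref{L:core_properties}. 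At this point the triple $(\SSS,\theta,\mu)$ of the statement is in hand.

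The final step identifies, in distribution, the combinatorial object $(\equiv_\#,\langle\cdot,\cdot\rangle_\#,<_\#)$ built from an i.i.d.\ $\mu$-sequence $\xi_1,\xi_2,\dots$ with $(\equiv,\langle\cdot,\cdot\rangle,<)$. The point is that $(\equiv_\#,<_\#)$ is a fixed measurable functional $F$ of the sequence: given the nonrandom rooted tree $(\SSS,\theta)$, the geodesic segments $[\theta,\xi_i]$, their pairwise intersections, and hence the equivalence classes and the partial order among them, are determined. So it suffices to exhibit one i.i.d.\ $\mu$-sequence whose image under $F$ has the right law, and $(\Pi_+(i))_{i\in\bN}$ is such a sequence: part (e) of Lemma~\ref{L:core_properties}, applied to $(\TT^+,<_+)$, gives that $\langle i,j\rangle_+$ is the most recent common ancestor of $\Pi_+(i)$ and $\Pi_+(j)$ in $\Gamma(\TT^+)$, equivalently that $[\theta,\Pi_+(i)]\cap[\theta,\Pi_+(j)]=[\theta,\langle i,j\rangle_+]$, so $\equiv_\#$ and $<_\#$ computed from $(\Pi_+(i))_{i\in\bN}$ coincide with $\equiv_+$ and $<_+$ (the declared leaf-to-ancestor relations $\langle i,j\rangle_\#<_\#\langle i,i\rangle_\#$ and the symmetry and non-collapse of diagonal pairs matching the first three items of Remark~\ref{R:didendritic_properties}, which is immediate). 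Hence $F$ applied to an i.i.d.\ $\mu$-sequence has the law of $(\equiv_+,<_+)$, and since $\DD_+\stackrel{d}{=}\DD$ forces $(\equiv_+,\langle\cdot,\cdot\rangle_+,<_+)\stackrel{d}{=}(\equiv,\langle\cdot,\cdot\rangle,<)$, the proposition follows.
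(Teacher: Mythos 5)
Your proposal reproduces the paper's own argument: the odd/even splitting into $\DD_-,\DD_+$ (equidistributed with $\DD$ and independent by ergodicity), the identity $\Gamma(\TT^-)=\Gamma(\TT^+)=\Gamma(\TT)$ via the no-singleton/Kingman paintbox argument from \cite{Guf16}, the de Finetti plus ergodicity step making $(\Pi_+(i))_{i\in\bN}$ i.i.d.\ with a nonrandom diffuse directing measure on the a.s.\ constant core, and the final identification through part (e) of Lemma~\ref{L:core_properties} and $\DD_+\stackrel{d}{=}\DD$. This is correct and essentially identical to the proof in the text, with your last paragraph if anything spelling out the concluding distributional identification slightly more explicitly than the paper does.
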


\section{Distinguishing between left and right}\label{S:leftright}

Throughout this section, let $(\TT,d)$ be the $\bR$-tree
constructed in Section~\ref{S:R_tree} from 
an ergodic exchangeable random didendritic system
$\DD = (\equiv, \, \langle \cdot, \cdot \rangle, \, <_L, \, <_R)$
(equivalently, from the labeled version $(\tilde T_n^\infty)_{n \in \bN}$ 
of an extremal infinite R\'emy bridge $(T_n^\infty)_{n \in \bN}$).

Let $\SSS$, $\theta$ and $\mu$ be the objects described in
Proposition~\ref{P:up_to_left_right}.  Thus,
$\SSS$ is a complete separable  $\bR$-tree,
$\theta$ is an element of $\SSS$, and $\mu$ is
a diffuse probability measure on $\SSS$.  Further, let
$\xi_1, \xi_2, \ldots$ be i.i.d. random elements of $\SSS$
with common distribution $\mu$.  We may suppose that 
$(i,i) \equiv (k,\ell)$ if and only if $(i,i) = (k,\ell)$
and that
$(i,j) \equiv (k,\ell)$ for $i \ne j$ and $k \ne \ell$
if and only 
if $[\theta,\xi_i] \cap [\theta,\xi_j] = [\theta,\xi_k] \cap [\theta,\xi_\ell]$, where we recall that
$[\theta,x]$ is the geodesic segment between $\theta$ and $x$
in $\SSS$.

Recall from Lemma~\ref{L:partial_order_plus_left-right} that 
if we know the equivalence relation $\equiv$ 
and the partial order $<$ of the didendritic system 
$\DD$, then
the partial orders $<_L$ and $<_R$ (and hence the didendritic system)
is uniquely determined by the specification for all distinct $i,j \in \bN$
whether
$\langle i,j \rangle <_L i$ and $\langle i,j \rangle <_R j$ 
  or
$\langle i,j \rangle <_R i$ and $\langle i,j \rangle <_L j$.

Put
\[
J_{ij} := \ind\{\langle i,j \rangle <_L i, 
\, \langle i,j \rangle <_R j\}
\]
for $(i,j) \in \bN \times \bN \setminus \delta$,
where $\delta := \{(k,k) : k \in \bN\}$.
Note for all $(i,j) \in \bN \times \bN \setminus \delta$
 that $J_{ij} = 1$ if and only if 
$J_{ji}=0$.  


It follows from the exchangeability and ergodicity
of $\DD$ that the random array $J$ is jointly exchangeable
and ergodic and, indeed, the random array
$(\xi_i, \xi_j, J_{ij})_{(i,j) \in \bN \times \bN \setminus \delta}$
is also jointly exchangeable and ergodic.  Therefore,
by the  Aldous--Hoover--Kallenberg theory of such random arrays, we may suppose that
on some extension of our underlying probability space
there exist i.i.d. random variables $(U_{i})_{i \in \bN}$,
and $(U_{ij})_{i,j \in \bN, \, i < j}$ that are uniform on $[0,1]$ 
and a function $F:(\mathbf S\times [0,1])^2 \times [0,1] \to \{0,1\}$ such that 
\[
J_{ij} 
= F(\xi_i,U_i,\xi_j,U_j,U_{ij}),
\] 
where $U_{ij} = U_{ji}$ for $i>j$ (here
$<$ is the usual order on $\bN$)
(see \cite[Theorem 7.22, Lemma 7.35]{MR2161313}).  
Because $J_{ij} = 1 -J_{ji}$,
the function $F$
has the property $F(y,v,x,u,w) = 1-F(x,u,y,v,w)$.

If $i,j,k \in \bN$ are distinct we have
$J_{ij} = J_{ik}$ on the event 
$\{\langle i,j \rangle = \langle i,k \rangle < \langle j,k \rangle\}
= \{[\theta, \xi_i] \cap [\theta, \xi_j] = [\theta, \xi_i] \cap [\theta, \xi_k]
\subsetneq [\theta, \xi_j] \cap [\theta, \xi_k]\}$.  That is,
$F(\xi_i, U_i, \xi_j, U_j, U_{ij}) 
= J_{ij} = J_{ik} 
= F(\xi_i, U_i, \xi_k, U_j, U_{ik})
\in \{0,1\}$ on the latter event.  Similarly,
$1-F(\xi_i, U_i, \xi_j, U_j, U_{ij}) 
= J_{ji} = J_{jk} 
= F(\xi_j, U_j, \xi_k, U_k, U_{jk})
\in \{0,1\}$ on the event
$\{\langle j,i \rangle = \langle j,k \rangle < \langle i,k \rangle\}
= \{[\theta, \xi_j] \cap [\theta, \xi_i] = [\theta, \xi_j] \cap [\theta, \xi_k]
\subsetneq [\theta, \xi_i] \cap [\theta, \xi_k]\}$.

By Lemma~\ref{L:genuine_metric},
\[
\bP\{\langle k,i \rangle = \langle k,j \rangle < \langle i,j \rangle 
\, \forall k \notin \{i,j\}\} = 0,
\]
and Lemma \ref{Lemma8_1} below then gives that 
\begin{equation}\label{noUij}
J_{ij} = W(\xi_i, U_i, \xi_j, U_j)
\end{equation}
almost surely for some Borel function $W: (\SSS \times [0,1])^2\to \{0,1\}$.

The intuition for \eqref{noUij} being true is firstly that if we had 
$\langle k,i \rangle = \langle k,j \rangle < \langle i,j \rangle$ 
for all $k \notin \{i,j\}$, so that $\{i,j\}$ is a
cherry, then there could be a need to use the randomization provided
by $U_{ij}$ to build $J_{ij}$; that is,
$F(\xi_i, U_i, \xi_j, U_j, U_{ij})$ could depend on $U_{ij}$.  
However, cherries do not occur with positive probability because 
of Lemma ~\ref{L:genuine_metric}.  Moreover, on the event where 
$\langle i,j \rangle = \langle i,k \rangle < \langle j,k \rangle$ for 
at least one, and hence infinitely many, $k \notin \{i,j\}$, 
it follows that 
$F(\xi_i, U_i, \xi_j, U_j, U_{ij}) = F(\xi_i, U_i, \xi_k, U_k, U_{ik})$
for all such $k$, which can't happen if 
$(x,u,y,v,w) \mapsto F(x,u,y,v,w)$ has a genuine functional dependence
on $(v,w)$, and hence only the extra randomization provided by $U_i$
(rather than that provided by $U_j$ and $U_{ij}$)
might be needed to build $J_{ij}$. Similarly, on the event where 
$\langle i,j \rangle = \langle j,k \rangle < \langle i,k \rangle$ for 
at least one, and hence infinitely many, $k \notin \{i,j\}$, 
it follows that 
$F(\xi_i, U_i, \xi_j, U_j, U_{ij}) = F(\xi_k, U_k, \xi_j, U_j, U_{kj})$
for all such $k$ and only the extra randomization provided by $U_j$
(rather than that provided by $U_i$ and $U_{ij}$)
might be needed to build $J_{ij}$.  Lastly, on the event where
$\langle i,k \rangle > \langle i,j \rangle = \langle k,\ell \rangle = \langle i, 
\ell \rangle = \langle k,j \rangle < \langle j,\ell \rangle$ for 
at least one, and hence infinitely many, pairs $k,\ell \notin \{i,j\}$,
it follows that
$F(\xi_i, U_i, \xi_j, U_j, U_{ij}) = F(\xi_k, U_k, \xi_\ell, U_\ell, U_{k,\ell})$
for all such pairs $k,\ell$ and there is no need for the extra randomization
provided by $U_i$, $U_j$ or $U_{ij}$
to build $J_{ij}$. 

We now give the promised formal argument for \eqref{noUij}. 
\begin{lemma}\label{Lemma8_1}
Consider on some probability space $(\Omega, \cF, \bP)$ 
independent random elements $X_1$, $X_2$, $X_3$  
of some Borel space $(D, \cD)$
and $Y_{12}$, $Y_{13}$, $Y_{23}$
of some Borel space $(E, \cE)$.   
Suppose that $X_1$, $X_2$, $X_3$ have the
same diffuse probability distribution $\alpha$
and that $Y_{12}$, $Y_{13}$, $Y_{23}$ have the same
diffuse probability distribution $\beta$.  Write $B$ for the subset of
$D^3$ that consists of triplets with distinct entries.
Given an ordered listing $i,j,k$ of $\{1,2,3\}$
and a set $C \subseteq B$, put 
$C_{ijk}:= \{(x_1, x_2, x_3) \in B : (x_i, x_j, x_k) \in C\}$.
Suppose that there is a set $A \in \cD^3$
such  
\begin{itemize}
\item
$A_{123} = A_{132}$,
\item
$A_{213} = A_{231}$,
\item
$A_{312} = A_{321}$,
\item
these $3$ sets are pairwise disjoint and their
union is $B$.
\end{itemize}
Suppose further that
\[
\begin{split}
& 
\alpha^{\otimes 2}\{(x_1,x_2) \in D^2 : 
\alpha\{x_3 \in D : (x_1,x_2,x_3) \in B \setminus A_{312}\}
= 0
\} \\
& \quad = 
\alpha^{\otimes 2}\{(x_1,x_2) \in D^2 : 
\alpha\{x_3 \in D : (x_1,x_2,x_3) \in B \setminus A_{321}\}
= 0
\} \\
& \quad = 0. \\
\end{split}
\]
Consider a Borel function $H: D^2 \times E \to \{0,1\}$ such that
\begin{itemize}
\item
$H(X_1,X_2,Y_{12}) = H(X_1,X_3,Y_{13})$ 
on the event $\{(X_1, X_2, X_3) \in A\} = \{(X_1, X_3, X_2) \in A\}$
\item
$H(X_1,X_2,Y_{12}) = 1-H(X_2,X_3,Y_{23})$
on the event $\{(X_2, X_1, X_3) \in A\} = \{(X_2, X_3, X_1) \in A\}$.
\end{itemize}
Then there exists a Borel function $K: D^2 \to \{0,1\}$
such that $H(X_1,X_2,Y_{12}) = K(X_1, X_2)$ almost surely.
\end{lemma}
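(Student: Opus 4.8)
The plan is to prove the \emph{a priori} stronger statement that for $\alpha^{\otimes 2}$-almost every $(x_1,x_2)\in D^2$ the function $y\mapsto H(x_1,x_2,y)$ is constant for $\beta$-almost every $y$. Granting this, one sets
\[
K(x_1,x_2):=\ind\Bigl\{\textstyle\int H(x_1,x_2,y)\,\beta(dy)\ge\tfrac12\Bigr\},
\]
which is Borel by Tonelli's theorem and which, on the full-measure set of ``good'' pairs $(x_1,x_2)$, agrees with the $\beta$-a.s.\ constant value of $H(x_1,x_2,\cdot)$; since $X_1,X_2,Y_{12}$ are independent with laws $\alpha,\alpha,\beta$, a further application of Fubini gives $H(X_1,X_2,Y_{12})=K(X_1,X_2)$ almost surely. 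So the whole content is the claimed $\beta$-a.s.\ constancy.

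Call $(x_1,x_2)$ \emph{good} if (i) $\alpha\{x_3:(x_1,x_2,x_3)\in A_{123}\cup A_{213}\}>0$, and (ii) the two hypothesized a.s.\ identities for $H$ persist after conditioning on $X_1=x_1$, $X_2=x_2$; by the displayed non-degeneracy hypothesis (together with $B=A_{123}\sqcup A_{213}\sqcup A_{312}$, so $B\setminus A_{312}=A_{123}\cup A_{213}$) and by Fubini, the good pairs have full $\alpha^{\otimes 2}$-measure. Fix a good $(x_1,x_2)$ and condition on $X_1=x_1$, $X_2=x_2$. Let $G:=\{(x_1,x_2,X_3)\in A_{123}\cup A_{213}\}$; since $\alpha$ is diffuse this is (a.s.) an event in $\sigma(X_3)$, and $\bP(G)=\alpha\{x_3:(x_1,x_2,x_3)\in A_{123}\cup A_{213}\}=:q>0$ by (i). Define a random variable $R$ by putting $R:=H(x_1,X_3,Y_{13})$ on $\{(x_1,x_2,X_3)\in A_{123}\}$, $R:=1-H(x_2,X_3,Y_{23})$ on $\{(x_1,x_2,X_3)\in A_{213}\}$, and $R:=0$ off $G$. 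Because $A_{123}=A\cap B$ and $\alpha$ is diffuse, $\{(x_1,x_2,X_3)\in A_{123}\}\subseteq\{(X_1,X_2,X_3)\in A\}$, so the first hypothesis on $H$ yields $H(x_1,x_2,Y_{12})=H(x_1,X_3,Y_{13})$ there; similarly $\{(x_1,x_2,X_3)\in A_{213}\}=\{(X_2,X_1,X_3)\in A\}$ and the second hypothesis yields $H(x_1,x_2,Y_{12})=1-H(x_2,X_3,Y_{23})$ there. Hence $\ind_G\,H(x_1,x_2,Y_{12})=R$ almost surely.

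Now I exploit the independence structure. The variable $R$ is a measurable function of $(X_3,Y_{13},Y_{23})$ and $\ind_G$ is a measurable function of $X_3$, so both are independent of $Y_{12}$. Conditioning the identity $\ind_G\,H(x_1,x_2,Y_{12})=R$ on $\sigma(Y_{12})$ gives, on the left, $H(x_1,x_2,Y_{12})\,\bE[\ind_G\mid Y_{12}]=q\,H(x_1,x_2,Y_{12})$, and, on the right, the constant $\bE[R]$; therefore $H(x_1,x_2,Y_{12})=\bE[R]/q$ for $\beta$-a.e.\ $Y_{12}$, which (being $\{0,1\}$-valued) is the desired constancy. I expect the only delicate part to be the bookkeeping in the previous paragraph: correctly translating membership in $A_{123}$ and $A_{213}$ into the applicability of the two hypothesized identities, and verifying that $G$ and $R$ really are independent of $Y_{12}$. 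The role of the non-degeneracy hypothesis is precisely to guarantee $\bP(G)=q>0$ --- intuitively, that $\{1,2\}$ is not forced to be a ``cherry'' --- so that there is always a third point through which the $Y_{12}$-randomness can be transferred away.
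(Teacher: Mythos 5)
Your proof is correct and follows essentially the same route as the paper's: you use the two consistency identities to show that $H(x_1,x_2,y_{12})$ multiplied by the positive quantity $\alpha\{x_3:(x_1,x_2,x_3)\in A_{123}\cup A_{213}\}$ equals something independent of $y_{12}$, the non-degeneracy hypothesis guaranteeing that this quantity is positive for $\alpha^{\otimes 2}$-a.e.\ $(x_1,x_2)$. The only cosmetic difference is that you merge the events $A_{123}$ and $A_{213}$ into a single event $G$ and phrase the averaging as a conditional expectation given $Y_{12}$, whereas the paper keeps the two integral identities separate; both are valid.
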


\begin{proof}
For $(x_1, x_2, y_{12}) \in D^2 \times E$ with $x_1 \ne x_2$ we have
\[
\begin{split}
& \int_{D \times E} H(x_1, x_2, y_{12}) \ind_{A_{123}}(x_1,x_2,x_3) \, \alpha \otimes \beta(d(x_3, y_{13})) \\
& \quad = 
\int_{D \times E} H(x_1, x_3, y_{13}) \ind_{A_{123}}(x_1,x_2,x_3) \, \alpha \otimes \beta(d(x_3, y_{13})) \\
\end{split}
\]
and
\[
\begin{split}
& \int_{D \times E} H(x_1, x_2, y_{12}) \ind_{A_{213}}(x_1,x_2,x_3) \, \alpha \otimes \beta(d(x_3, y_{23})) \\
& \quad = 
\int_{D \times E} (1 - H(x_2, x_3, y_{23}) \ind_{A_{213}}(x_1,x_2,x_3) \, \alpha \otimes \beta(d(x_3, y_{23})). \\
\end{split}
\]
 Thus,
\[
\begin{split}
& H(x_1, x_2, y_{12}) \int_D \ind_{A_{123}}(x_1,x_2,x_3) \, \alpha(dx_3) \\
& \quad =
\int_{D \times E} H(x_1, x_3, y_{13}) \ind_{A_{123}}(x_1,x_2,x_3) \, \alpha \otimes \beta(d(x_3, y_{13})) \\
\end{split}
\]
and
\[
\begin{split}
& H(x_1, x_2, y_{12}) \int_D \ind_{A_{213}}(x_1,x_2,x_3) \, \alpha(dx_3) \\
& \quad =
\int_{D \times E} (1 - H(x_2, x_3, y_{23}) \ind_{A_{213}}(x_1,x_2,x_3) \, \alpha \otimes \beta(d(x_3, y_{23})).  \\
\end{split}
\]
The last two equations specify the value of
$H(x_1, x_2, y_{12})$ as a quantity depending on
$(x_1, x_2)$ alone
except for those pairs $(x_1, x_2)$ such that
\[
\int_D \ind_{A_{123}}(x_1,x_2,x_3) \, \alpha(dx_3)
=
\int_D \ind_{A_{213}}(x_1,x_2,x_3) \, \alpha(dx_3)
=
0,
\]
but the set of such pairs has zero $\alpha^{\otimes 2}$-measure by assumption.
\end{proof}

The function $W$ is not arbitrary; it must satisfy 
some obvious consistency conditions.
For example, for distinct $i,j,k \in \bN$ when 
$\langle i,j \rangle 
= \langle i,k \rangle 
< \langle j,k \rangle$,
it must be the case that
$\langle i,j \rangle <_L i$ if and only if $\langle i,k \rangle <_L i$,
and this translates into the requirement that when
$[\rho, \xi_i] \cap [\rho,\xi_j] 
= [\rho, \xi_i] \cap [\rho, \xi_k] 
\subsetneq 
[\rho, \xi_j] \cap [\rho, \xi_k]$ 
it must be the case that
$W(\xi_i,U_i,\xi_j,U_j) = 1$ if and only if $W(\xi_i,U_i,\xi_k,U_k) = 1$,
which in turn translates into the requirement that for
$(\mu \otimes \lambda)^{\otimes 3}$-a.e. 
$((x,u),(y,v),(z,w)) \in (\SSS \times [0,1])^3$ when
$[\theta, x] \cap [\theta, y] = [\theta, x] \cap [\theta, z]
\subsetneq [\theta, y] \cap [\theta, z]$
it must be the case that $W((x,u),(y,v)) = W((x,u),(z,w))$.

The next result specifies fully these consistency conditions 
and combines, without the need for significant further argument,
the development leading to
Proposition~\ref{P:up_to_left_right} 
with the considerations so far in this section
about resolving ``left--vs--right''
to give a complete characterization of
the family of ergodic exchangeable random didendritic systems and hence
a concrete description of the family of extremal infinite R\'emy bridges.
The result is thus an explicit determination of the Doob--Martin boundary of the R\'emy chain.
The only point that deserves some added explanation is the claim
of ergodicity in the statement of the converse; however, this
follows from the observation made in Remark~\ref{R:jointly_exchangeable}
that ergodicity of an exchangeable random didendritic system (on $\bN$) is equivalent
to the independence of the exchangeable random didendritic systems it induces on
disjoint subset of $\bN$.

\begin{theorem}
\label{T:main}
Consider a complete separable  $\bR$-tree $\SSS$,
a point $\theta \in \SSS$, a diffuse probability measure
$\mu$ on $\SSS$, and a Borel function
$W: (\SSS \times [0,1])^2 \to \{0,1\}$. Let $\lambda$
be Lebesgue measure on $[0,1]$.
Suppose that the following hold.
\begin{itemize}
\item
For $\mu^{\otimes 3}$-a.e. $(x,y,z) \in \SSS^3$,
two of the three geodesic segments
$[\theta, x] \cap [\theta, y]$, $[\theta, x] \cap [\theta, z]$,
$[\theta, y] \cap [\theta, z]$ are equal and these two are
strictly contained in the third.
\item
For $(\mu \otimes \lambda)^{\otimes 3}$-a.e. 
$((x,u),(y,v),(z,w)) \in (\SSS \times [0,1])^3$,
$[\theta, x] \cap [\theta, y] = [\theta, x] \cap [\theta, z]
\subsetneq [\theta, y] \cap [\theta, z]$
implies that $W((x,u),(y,v)) = W((x,u),(z,w))$.
\item
For $(\mu \otimes \lambda)^{\otimes 2}$-a.e. 
$((x,u),(y,v)) \in (\SSS \times [0,1])^2$, 
$W((x,u),(y,v)) = 1 - W((y,v),(x,u))$.
\end{itemize}
Let $(\xi_1, U_1), (\xi_2, U_2), \ldots$ be i.i.d. random elements of 
$\SSS \times [0,1]$ with common distribution $\mu \otimes \lambda$.  
There is an ergodic exchangeable random didendritic system 
$(\equiv, \, \langle \cdot, \cdot \rangle, \, <_L, \, <_R)$
defined as follows. The random equivalence relation
$\equiv$ on $\bN \times \bN$ is given by declaring that 
\[
(i,i) \equiv (k,\ell) \quad \text{if and only if} \quad (i,i) = (k,\ell),
\]
and 
\[
(i,j) \equiv (k,\ell), i \ne j, k \ne \ell, \quad \text{if and only if} \quad
[\theta,\xi_i] \cap [\theta,\xi_j] = [\theta,\xi_k] \cap [\theta,\xi_\ell].
\]
The random partial orders $<_L$ and $<_R$
on the corresponding set of equivalence classes
$\{\langle i,j \rangle : i,j \in \bN\}$ are specified by declaring
for $i,j \in \bN$, $i \ne j$, that
\[
\langle i,j \rangle <_L  i
\; \text{\&} \langle i,j \rangle <_R  j
\quad \text{if and only if} \quad
W(\xi_i, U_i, \xi_j, U_j) = 1.
\]

Conversely, any ergodic exchangeable random didendritic system
has the same probability distribution as one constructed in this manner
for $\SSS, \theta, \mu, W$ satisfying the assumptions above.
\end{theorem}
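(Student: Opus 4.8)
The plan is to prove the two directions separately, exploiting that almost all of the technical work has already been done in Sections~\ref{S:R_tree}--\ref{S:leftright}. The forward direction amounts to checking that the three abstract conditions on $(\SSS,\theta,\mu,W)$ are exactly enough to run the reconstruction of Lemma~\ref{L:partial_order_plus_left-right}; the converse is a matter of assembling Proposition~\ref{P:up_to_left_right}, equation~\eqref{noUij}, and Lemma~\ref{L:partial_order_plus_left-right}, and checking that the data they produce satisfy the hypotheses.

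For the forward direction I would start from i.i.d.\ $((\xi_i,U_i))_{i\in\bN}$ with law $\mu\otimes\lambda$. Since $\mu$ is diffuse, the points $\xi_i$ are almost surely pairwise distinct, and the first hypothesis applied to every triple (a countable union of null events) gives that, almost surely, for all $n$ the combinatorial object spanned by $\theta,\xi_1,\dots,\xi_{n+1}$ --- with the partial order induced by rooting at $\theta$, the metric forgotten --- is a genuine leaf--labeled binary tree, because a family of three--leaf subtree types of the required kind always assembles into a binary tree. I would then install the left--right structure by declaring $\langle i,j\rangle<_L i$ and $\langle i,j\rangle<_R j$ exactly when $W(\xi_i,U_i,\xi_j,U_j)=1$ and extending via Lemma~\ref{L:partial_order_plus_left-right}; here the third hypothesis supplies the antisymmetry $W(\xi_i,U_i,\xi_j,U_j)=1-W(\xi_j,U_j,\xi_i,U_i)$ and the second supplies the compatibility needed when $\langle i,j\rangle=\langle i,k\rangle<\langle j,k\rangle$, so that the reconstruction of Lemma~\ref{L:partial_order_plus_left-right} yields a valid planar leaf--labeled binary tree $\tilde T^{(n)}$ for each $n$. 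Deleting the leaf $\xi_{n+2}$ together with its sibling turns $\tilde T^{(n+1)}$ into $\tilde T^{(n)}$, so $(\tilde T^{(n)})_{n\in\bN}$ has the backward dynamics of a labeled infinite R\'emy bridge and its associated didendritic system is precisely the one described in the statement. Exchangeability is immediate because the construction is a finite--permutation--equivariant function of the i.i.d.\ sequence; ergodicity follows because the didendritic system induced on a finite $H\subseteq\bN$ depends only on $\{(\xi_i,U_i):i\in H\}$, so disjoint restrictions are independent, which is equivalent to ergodicity by the criterion recalled in Remark~\ref{R:jointly_exchangeable}.

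For the converse, given an ergodic exchangeable random didendritic system $\DD=(\equiv,\langle\cdot,\cdot\rangle,<_L,<_R)$ --- equivalently the labeled version of an extremal infinite R\'emy bridge by Proposition~\ref{P:extremal_vs_ergodic} --- I would take $\SSS,\theta,\mu$ and the i.i.d.\ $\xi_i\sim\mu$ furnished by Proposition~\ref{P:up_to_left_right}, which lets us realize $(\equiv,\langle\cdot,\cdot\rangle,<)$ via the geodesic segments $[\theta,\xi_i]\cap[\theta,\xi_j]$ as in the statement. The first hypothesis then holds because almost surely each triple of leaves spans one of the twelve shapes of Figure~\ref{fig:all_three_leaf_trees}, in each of which two of the three relevant equivalence classes agree and are a strict ancestor of the third (translated through the metric), and diffuseness of $\mu$ gives distinctness of the $\xi_i$. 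For the function $W$ I would set $J_{ij}:=\ind\{\langle i,j\rangle<_L i,\ \langle i,j\rangle<_R j\}$, invoke the Aldous--Hoover--Kallenberg representation of the jointly exchangeable ergodic array $(\xi_i,\xi_j,J_{ij})$, and then Lemma~\ref{Lemma8_1} (applicable because cherries have probability zero by Lemma~\ref{L:genuine_metric}) to pass to \eqref{noUij}; the relation $J_{ij}=1-J_{ji}$ forces the third hypothesis on $W$, and $J_{ij}=J_{ik}$ on $\{\langle i,j\rangle=\langle i,k\rangle<\langle j,k\rangle\}$ forces, after translating through the $\xi$'s, the second hypothesis, so $(\SSS,\theta,\mu,W)$ satisfies all assumptions. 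Finally, Lemma~\ref{L:partial_order_plus_left-right} says $\DD$ is determined by $\equiv$, $<$ and the cherry assignments $(J_{ij})$, and the didendritic system built from this very $(\SSS,\theta,\mu,W)$ by the recipe of the statement reproduces the same $\equiv$ and $<$ (by Proposition~\ref{P:up_to_left_right}) and the same cherry assignments (by \eqref{noUij}); hence it has the same distribution as $\DD$ and is ergodic by the forward direction.

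I expect the main obstacle to be the forward direction: carefully checking that the three conditions on $(\SSS,\theta,\mu,W)$ are precisely the consistency requirements under which Lemma~\ref{L:partial_order_plus_left-right}'s reconstruction produces a genuine sequence of planar leaf--labeled binary trees that is projective under the backward R\'emy move --- in particular ruling out almost surely any degeneracy (coincident sampled points, three--leaf subtrees that fail to be proper binary trees, or conflicting left--right assignments at a would--be cherry). Everything else is a direct transcription of results already established in Sections~\ref{S:N_tree}--\ref{S:leftright}.
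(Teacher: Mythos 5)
Your proposal is correct and follows essentially the same route as the paper, which itself presents the theorem as a combination of Proposition~\ref{P:up_to_left_right}, the Aldous--Hoover--Kallenberg representation together with Lemma~\ref{Lemma8_1} (equation~\eqref{noUij}), and the reconstruction of Lemma~\ref{L:partial_order_plus_left-right}, with ergodicity of the constructed system obtained exactly as you do, from the independence of the restrictions to disjoint subsets of $\bN$ via the criterion in Remark~\ref{R:jointly_exchangeable}. If anything, your write-up of the forward direction is more explicit than the paper's, which declares that no significant further argument is needed beyond the ergodicity remark.
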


\begin{example}
Recall the infinite R\'emy bridge of Example~\ref{E:coin_tossing_bridge}.
We know from Example~\ref{E:coin_tossing_tree} that we may take
\begin{itemize}
\item
$\SSS$ to be $[0,1]$ equipped with the usual metric,
\item
$\theta$ to be the point $0 \in [0,1]$,
\item
$\mu$ to be Lebesgue measure on $[0,1]$.
\end{itemize}
We may then take
\[
W(x,u,y,v) 
= 
\begin{cases}
1,& \quad \text{if $x < y$ and $u < \frac{1}{2}$}, \\
0,& \quad \text{if $x < y$ and $u > \frac{1}{2}$}, \\
1,& \quad \text{if $y < x$ and $v < \frac{1}{2}$}, \\
0,& \quad \text{if $y < x$ and $v > \frac{1}{2}$}, \\
0,& \quad \text{otherwise}.
\end{cases}
\]
\end{example}

\begin{example} 
Now consider the infinite R\'emy
bridge of Example~\ref{E:MBfulltree}. Here $\SSS$
is the $\bR$-tree $\TT$ of Example~\ref{E:MBfulltree_real}.
The leaves of $\SSS$ are in a bijective correspondence with
$\{0,1\}^\infty$ and $\mu$ may be identified with the fair
coin-tossing measure $\kappa$ on $\{0,1\}^\infty$
introduced in Example~\ref{E:complete_binary_embedding}.
There is no need for genuine randomization in this case.  Indeed, 
for $\mu^{\otimes 2}$-a.e. $(\xi_1, \xi_2)$ we have 
either
$W(\xi_1,u_1,\xi_2,u_2) = 0$ for
$\lambda^{\otimes 2}$-a.e. $(u_1,u_2)$
or
$W(\xi_1,u_1,\xi_2,u_2) = 1$ for
$\lambda^{\otimes 2}$-a.e. $(u_1,u_2)$.
That is, we can just take the $\bR$--tree $\SSS$ and augment it with
{\em deterministic} left--right choices because in this case for any $i,j$ we have
$\langle i,j \rangle = \langle k, \ell \rangle$
for infinitely many other $k,\ell$.
The resulting representation of the infinite R\'emy bridge
coincides with the one given in Example~\ref{E:MBfulltree}.
\end{example}

\begin{remark}
As we remarked in the Introduction, the distribution of the limit 
in the Doob--Martin topology of the R\'emy chain (that is,
the probability measure that appears in the Poisson boundary
of the R\'emy chain) is concentrated on points that
can be represented in terms of ensembles
$\SSS, \theta, \mu, W$ such that $W$ takes values in $\{0,1\}$.
\end{remark}

\begin{remark}
Theorem~\ref{T:main} gives a concrete characterization of
the family of ergodic exchangeable random didendritic systems
or, equivalently, the family of extremal infinite R\'emy bridges.  
Consequently, it gives an explicit description of
the points in the  Doob--Martin boundary of the R\'emy chain.
Of course, the ingredients
appearing in the representation afforded by the result are 
not unique.  Also, the Doob--Martin boundary is not just
a set: it carries a metrizable topological structure.  
However, a sequence of representations 
corresponds to a convergent sequence of boundary points if and only if 
the restrictions of the associated exchangeable random didendritic systems
to finite subsets of $\bN$ converge in distribution.
That is, a sequence of representations 
corresponds to a convergent sequence of boundary points if and only if
for all $m$ the sequence of random binary trees built by sampling
$m+1$ points according to the associated sampling measure and determining left--versus--right
orderings using extra randomness as necessary converges in distribution.  
\end{remark}

\section{Corrigenda}
\label{S:corrigenda}
\subsection{Remark~\ref{R:extremal_vs_ergodic} and proof of Lemma~\ref{L:tail_relation}}
\label{S9_1}
Remark~\ref{R:extremal_vs_ergodic}  says that 
Proposition~\ref{P:extremal_vs_ergodic}  can also be proved along the lines of the proof of 
Lemma~\ref{L:tail_relation}. As pointed out to us by Julian Gerstenberg, 
the proof of Lemma~\ref{L:tail_relation} is incorrect. The proof of Proposition~\ref{P:extremal_vs_ergodic} which follows the statement of that result does not depend on Lemma~\ref{L:tail_relation}. Also,  notwithstanding the incorrect proof of Lemma~\ref{L:tail_relation}, its assertion {\em is} correct, see 
the remark following Corollary 2.8 in \cite{Ger18}. Nonetheless, we want
to make it clear where the incorrectness in our proof of Lemma~\ref{L:tail_relation} lies.

Our argument for proving Lemma~\ref{L:tail_relation} proceeded as follows.  Let
$\mathcal F = \sigma\{L_p : p \in \mathbb N\}$, $\mathcal G_m = \sigma\{T_n^\infty :n \ge m\}$, $m \in \mathbb N$, and
$\mathcal G_\infty = \bigcap_{m \in \mathbb N} \mathcal G_m$.  We claimed that
\begin{equation} \label{interchange}
\bigcap_{m \in \mathbb N} (\mathcal F \vee \mathcal G_m) = \mathcal F \vee \mathcal G_\infty \, \quad \mathbb P\mbox{-a.s.}
\end{equation}
as a consequence of the implication (d) $\Longrightarrow$ (a) in \cite{MR699981}.  Condition (d) in \cite{MR699981}
says that there is a countably generated $\sigma$-field $\mathcal H$ such that if $\mathbb P^{\mathcal F}$ is a regular conditional
probability given $\mathcal F$, then 
\[
\mathcal G_\infty = \mathcal H, \quad \mathbb P^{\mathcal F}(\omega, \cdot)\text{-a.s.} \; \text{for all} \; \omega \in \Omega.
\]
In our attempt to establish this last conclusion, 
we first argued that since $\mathcal F$ and $\mathcal G_\infty$ are independent 
one can conclude that $\mathbb P^{\mathcal F}(\omega, \cdot)$ restricted to $\mathcal G_\infty$ coincides with $\mathbb P$
restricted to $\mathcal G_\infty$ for all $\omega \in \Omega$.

To see that this reasoning is incorrect, consider the simpler situation where $\mathbb P$ is fair coin-tossing measure on
$\Omega = \{(\omega_n)_{n \in \mathbb N} \in \{0,1\}^\infty\}$, $\mathcal F$ is the usual product $\sigma$-field, and
$\mathcal G_m = \sigma\{\omega \mapsto \omega_n : n \ge m\}$.  In this case, $\mathcal F$ and $\mathcal G_\infty$ are independent
because $\mathcal G_\infty$ is $\mathbb P$-trivial.  However, for the  $\mathcal G_\infty$-measurable sets
\[ [\omega] := \bigcup_{m\ge1}  \{(\alpha_n)\in \{0,1\}^\infty: \alpha_j=\omega_j \mbox{ for all } j\ge m\}\]
we have
 $\mathbb P^{\mathcal F}(\omega, [\omega]) = 1$ for all $\omega \in \Omega$,  whereas $\mathbb P([\omega]) =0$ for all $\omega \in \Omega$. 
 (While in this example the relation \eqref{interchange} is clearly true, 
\cite[p. 93]{MR699981}) provides an example  of a $\sigma$-field~$\mathcal F$ and a sequence of $\sigma$-fields $\mathcal G_1 \supseteq \mathcal G_2 \supseteq \ldots$ with a.s. trivial tail-$\sigma$-field $\mathcal G_\infty$ in which \eqref{interchange} does not hold. See also \cite{MR3630297} for related  recent developments.


\subsection{Amendment of Definition~\ref{D:didendritic_properties}}\label{S9_2} In Definition~\ref{D:didendritic_properties} we introduced didendritic system as an ensemble consisting of an equivalence relation on $\mathbb{N} \times \mathbb{N}$, a set of equivalence classes $\{\langle i,j \rangle : i,j \in \mathbb{N}\}$ where
$\langle i,j \rangle$ is the equivalence class containing the pair $(i,j)$, and
three partial orders $<_L$, $<_R$, and $<$ on $\{\langle i,j \rangle : i,j \in \mathbb{N}\}$.  The objects in this ensemble are required to satisfy a certain set of axioms.  If one restricts the equivalence
relation to $\mathcal N \times \mathcal N$ for some finite subset $\mathcal N$ of $\mathbb{N}$ and restricts the equivalence classes and partial orders similarly, then one obtains an object that satisfies
the same axioms  and could be called a didendritic system with the finite label set $\mathcal N$.

The further development in the paper makes essential use of the fact that  
 any didendritic system with the finite label set $\mathcal N$ has a faithful representation as a finite full binary tree (that is, a finite rooted ordered tree in which
each vertex has out-degree $2$ or $0$) with its leaves labeled by $\mathcal N$ and that all leaf-labeled finite full binary trees arise this way: for example, the relation
$\langle i,j \rangle < \langle k, \ell \rangle$ translates to the requirement that the path in the tree from the root to the most recent common ancestor of the leaves labeled $k$ and $\ell$
passes through the most recent common ancestor of the leaves labeled $i$ and $j$.

In the first sentence of Remark~\ref{R:didendritic_to_bridge} it is claimed that such a one-to-one correspondence between didendritic systems with finite label sets and  finite leaf-labeled full binary trees holds with the axioms from Definition~\ref{D:didendritic_properties}. The following example shows that this claim does not hold.
\begin{example}
Consider the equivalence relation on $\{1,2,3\} \times \{1,2,3\}$ defined by 
\[(h,i)\equiv (j,k) \mbox{ if an only if } h=j \mbox{ and } i=k, \mbox{ or } h=k \mbox{ and } i=j.\]
For $(i, j) \in \{(1,2),(1,3), (2,3)\} $ say that $\langle i,j\rangle <_L \langle i,i\rangle$ and $\langle i,j\rangle <_R \langle j,j\rangle$. Moreover, say that $\langle h,i \rangle < \langle j,k \rangle$ if $\langle h,i \rangle <_L \langle j,k \rangle$ or $\langle h,i \rangle <_R \langle j,k \rangle$. Then $(\{1,2,3\}, \equiv, <_L, <_R,<)$ meets the axioms of Definition ~\ref{D:didendritic_properties}. However,  this system  does not correspond to a binary tree with $3$~leaves, since this would require exactly $5$ equivalence classes (each one corresponding to a vertex of the tree), whereas this system has $3+3 = 6$ equivalence classes. In particular, none of the $6$ equivalence classes can be distinguished as a root in any meaningful way.
\end{example}
In \cite{EW18}
an amended definition of a didendritic system is given which consists of the axioms equivalent to those in the Remark~\ref{R:didendritic_properties} plus the {\em ``Triplet property''} 

\medskip
\noindent
(C) {\em For distinct $i, j, k$, exactly one of
\[
\langle i,j\rangle = \langle i,k\rangle < \langle j,k\rangle
\]
\[
\langle j,k\rangle = \langle j,i\rangle < \langle k,i \rangle
\]
\[
\langle k,i\rangle = \langle k,j\rangle < \langle i,j \rangle
\]
holds.}

\medskip

Property (C) allows one to distinguish in a unique manner one of the equivalence classes of the didendritic system which is then mapped into  the root of the corresponding leaf-labeled full binary tree. Indeed, it is shown in \cite[Proposition~5.8]{EW18} that, with this new definition, any didendritic system with a finite label set may be thought of as a leaf-labeled finite full binary tree.
Since the later development in current paper depends solely
on the conclusion of Remark~\ref{R:didendritic_to_bridge} therein rather than the specific axioms that one uses to characterize a didendritic system, the arguments in the paper become correct once one adds axiom (C) to the definition there of a didendritic system.

\textbf{Acknowledgment}. 
Rudolf Gr\"ubel thanks Julian Gerstenberg for many stimulating discussions.
We thank Stephan Gufler for pointing out a gap in a previous version
of Section~\ref{S:probmeas} and showing us how to repair it. 

\newcommand{\etalchar}[1]{$^{#1}$}
\def\cprime{$'$}
\providecommand{\bysame}{\leavevmode\hbox to3em{\hrulefill}\thinspace}
\providecommand{\MR}{\relax\ifhmode\unskip\space\fi MR }
\providecommand{\MRhref}[2]{%
  \href{http://www.ams.org/mathscinet-getitem?mr=#1}{#2}
}
\providecommand{\href}[2]{#2}

\end{document}